\documentclass[11pt]{amsart}
\usepackage{amssymb}
\usepackage{mathrsfs}
\usepackage{cite}
\usepackage{amscd}
\usepackage{amsmath}
\usepackage[all]{xy} 
\usepackage{hyperref}

{\theoremstyle{definition} 
\newtheorem{dfn}{Definition}
}
\newtheorem{pro}{Proposition}
\newtheorem{lmm}{Lemma}
\newtheorem{thm}{Theorem}
\newtheorem{cor}{Corollary}

\newcommand{\HHi}{{\mathcal{H}{\rm ilb}}}
\newcommand{\Hi}{{{\rm Hilb}}}
\newcommand{\et}{\text{\rm \'et}}

\parskip=10pt plus 1pt
\parindent=0pt

\begin{document} 

\title{Components of Gr\"obner strata in the Hilbert scheme of points}
\author{Mathias Lederer}
\email{mlederer@math.cornell.edu}
\thanks{This research was partly supported by Collaborative Research Center 701 (SFB 701)
``Spectral Structures and Topological Methods in Mathematics'' in Bielefeld
and partly by a Marie Curie International Outgoing Fellowship of the EU Seventh Framework Program}
\address{Department of Mathematics, Cornell University, Ithaca, New York 14853, USA}
\date{May 2010}
\keywords{Hilbert scheme of points, Gr\"obner bases, standard sets, irreducible components, connected components}
\subjclass[2010]{14C05; 14N10; 13P10; 13B25}


\begin{abstract} 
  We fix the lexicographic order $\prec$ on the polynomial ring $S=k[x_{1},\ldots,x_{n}]$ over a ring $k$. 
  We define $\Hi^{\prec\Delta}_{S/k}$, 
  the moduli space of reduced Gr\"obner bases with a given finite standard set $\Delta$, 
  and its open subscheme $\Hi^{\prec\Delta,\et}_{S/k}$, 
  the moduli space of families of $\#\Delta$ points whose attached ideal has the standard set $\Delta$. 
  We determine the number of irreducible and connected components of the latter scheme; 
  we show that it is equidimensional over ${\rm Spec}\,k$; 
  and we determine its relative dimension over ${\rm Spec}\,k$. 
  We show that analogous statements do not hold for the scheme $\Hi^{\prec\Delta}_{S/k}$. 
  Our results prove a version of a conjecture by Bernd Sturmfels. 
\end{abstract}

\maketitle


\section{Introduction}\label{intro}

Let $k$ be a ring and let $S: = k[x]: = k[x_{1},\ldots,x_{n}]$ be the polynomial ring in $n$ variables over $k$. 
Let $\prec$ be the lexicographic order on $S$ such that $x_1 \succ \ldots \succ x_n$. 
Let $\Delta \subset \mathbb{N}^n$ be a standard set of size $r$. 
This is a set with $r$ elements such that 
$\mathbb{N}^n \setminus \Delta$ is closed w.r.t. addition of elements of $\mathbb{N}^n$. 
The minimal generators of the $\mathbb{N}^n$-module $\mathbb{N}^n \setminus \Delta$ are called the {\it corners} of $\Delta$;
we denote the set of corners of $\Delta$ by $\mathscr{C}(\Delta)$. 
In the paper \cite{strata}, we study the functor 
\begin{equation*}
  \HHi^{\prec\Delta}_{S/k}:(k{\rm-Alg})\to({\rm Sets}), 
\end{equation*}
where $(k{\rm-Alg})$ is the category of $k$-algebras and $({\rm Sets})$ is the category of sets. 
This functor attaches to each $k$-algebra $B$ the set of all reduced Gr\"obner bases w.r.t. $\prec$
of monic ideals in $S\otimes_k B$ with standard set $\Delta$. 
For the definition of {\it monic ideals} and {\it reduced Gr\"obner bases with standard set $\Delta$}, 
see \cite{pauer}, \cite{wibmer} or \cite{strata}. 
At this point, we only address that the reduced Gr\"obner basis of a monic ideal $I \subset B$ 
with standard set $\Delta$ is a unique collection of polynomials $f_{\alpha}\in I$,
for all $\alpha\in\mathscr{C}(\Delta)$, which take the shape
  \begin{equation*}
    f_{\alpha}=x^\alpha+\sum_{\beta\in\Delta,\,\beta\prec\alpha}d_{\alpha,\beta}x^\beta .
\end{equation*}
In \cite{strata}, we prove that the functor $\HHi^{\prec\Delta}_{S/k}$ is representable 
by an affine scheme $\Hi^{\prec\Delta}_{S/k}$. 
This scheme is therefore the moduli space of all reduced Gr\"obner bases in $S$ with standard set $\Delta$.
It turns out that the coordinate ring of $\Hi^{\prec\Delta}_{S/k}$ is a $k$-algebra of finite type. 
In particular, if the ring $k$ is noetherian, 
the topological space $\Hi^{\prec\Delta}_{S/k}$ has only finitely many irreducible components
and only finitely many connected components. 

In the cited paper, we also show that $\Hi^{\prec\Delta}_{S/k}$ is a closed subscheme of an open subscheme
$\Hi^{\Delta}_{S/k}$ of $\Hi^{r}_{S/k}$, 
the {\it Hilbert scheme of $r$ points}. (Remember that $r = \#\Delta$.)
The latter scheme is a classical object of study, see \cite{huibregtse}, \cite{norge}, \cite{bertin} and the references therein.
The intermediate scheme $\Hi^{\Delta}_{S/k}$ is the moduli space of all {\it $\Delta$-border bases}
in the terminology of \cite{kk1}, \cite{krbook},\cite{kk2} and \cite{braunpokutta}.
The schemes $\Hi^{\Delta}_{S/k}$, where $\Delta$ runs through all standard sets of size $r$, 
form an open cover of $\Hi^{r}_{S/k}$. 

In the paper \cite{jpaa}, 
we defined an addition operation on the set of all standard sets in $\mathbb{N}^n$, 
which is reminiscent of the popular game {\it Connect Four}. 
Accordingly, if $\Delta$ and $\Delta^\prime$ are two standard sets, 
we call $\Delta+\Delta^\prime$ their {\it Connect Four sum}, or \emph{C4 sum}. 
We will study this operation in Section \ref{combinatorics}. 
In particular, we will define what a {\it Connect Four decomposition}, 
or \emph{C4 decomposition} $\{\Delta_i: i\in I\}$ of a standard set $\Delta$ is. 
For the sake of simplicity, we will identify a C4 decomposition as above with its indexing set $I$. 
Moreover, we will introduce the {\it C4 decomposition number} of $\Delta$, 
which measures all possibilities to iteratively decompose $\Delta$ into a C4 sum of indecomposable standard sets. 

Based on the combinatorial structures introduced in Section \ref{combinatorics}, 
we will construct a number of auxiliary schemes in Section \ref{affines}. 
All of them will be based on the affine scheme $\Hi^{\prec\Delta}_{S/k}$;
the most important one of them will be denoted by $Y^\Delta$. 
This scheme will be the coproduct of schemes $Y^I$, where $I$ runs through all C4 decompositions of $\Delta$. 
The scheme $Y^\Delta$, and the functor it represents, will be used in the rest of the paper. 

In Section \ref{connectfour}, a large portion of the technical work of the present paper is carried out. 
We introduce a morphism $\tau:Y^{\Delta}\to\Hi^{\prec\Delta}_{S/k}$,
which is a universal version of the Connect Four operation of Gr\"obner bases we introduced in our paper \cite{jpaa},
and will therefore be called the {\it Connect Four morphism}, or \emph{C4 morphism}. 
At the end of Section \ref{connectfour}, we will prove the following Theorem: 

\begin{thm}\label{immersion}
  The morphism $\tau:Y^{\Delta}\to\Hi^{\prec\Delta}_{S/k}$ is an immersion. 
\end{thm}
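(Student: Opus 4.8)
The plan is to exploit the coproduct $Y^{\Delta} = \coprod_{I} Y^{I}$ from Section \ref{affines}, where $I$ runs through the Connect Four decompositions of $\Delta$, and to reduce the assertion to a local statement about each summand together with a combinatorial disjointness statement. Writing $\tau^{I} := \tau|_{Y^{I}}\colon Y^{I} \to \Hi^{\prec\Delta}_{S/k}$, I would prove: (i) for each $I$ there is a locally closed subscheme $H^{I} \subseteq \Hi^{\prec\Delta}_{S/k}$ such that $\tau^{I}$ induces an isomorphism $Y^{I} \xrightarrow{\sim} H^{I}$; (ii) the $H^{I}$ are pairwise disjoint; and (iii) $\bigcup_{I} H^{I}$ is a locally closed subscheme of $\Hi^{\prec\Delta}_{S/k}$ in which each $H^{I}$ is open. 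Granting (i)--(iii), $\tau = \coprod_{I}\tau^{I}$ is the composite of the isomorphism $Y^{\Delta} \xrightarrow{\sim} \bigcup_{I} H^{I}$ with the locally closed immersion $\bigcup_{I} H^{I} \hookrightarrow \Hi^{\prec\Delta}_{S/k}$, hence an immersion.

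For (i) I would make the image of $\tau^{I}$ explicit. Unwinding the universal Connect Four morphism --- the universal version of the operation of \cite{jpaa} --- a reduced Gr\"obner basis $(f_{\alpha})_{\alpha\in\mathscr{C}(\Delta)}$ over a $k$-algebra $B$ lies in the image of $\tau^{I}$ exactly when its structure constants $d_{\alpha,\beta}$ obey a prescribed pattern: the ``cross-block'' constants must vanish (a closed condition), while the data in the ``diagonal blocks'' must be reduced Gr\"obner bases with precisely the standard sets $\Delta_{i}$ prescribed by $I$ (an open condition, since prescribing a standard set is open on the relevant $\Hi^{\#\Delta_{i}}_{S/k}$). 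This realizes $H^{I}$ as closed in an open subscheme of $\Hi^{\prec\Delta}_{S/k}$. To see that $\tau^{I}\colon Y^{I}\to H^{I}$ is an isomorphism I would construct an inverse: the Connect Four recombination of Gr\"obner bases is reversible, and recovering the pieces block by block amounts to selecting subsets of the $d_{\alpha,\beta}$ and inverting the triangular recombination formulas, which defines a morphism $H^{I}\to Y^{I}$; that it is two-sided inverse to $\tau^{I}$ follows from the reversibility statement of \cite{jpaa} applied on $B$-valued points. The binary case, for a single split $\Delta = \Delta'+\Delta''$, is the heart of the matter; the general case is obtained by iterating it along $I$, compatibly with the fibre-product description of $Y^{I}$.

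Parts (ii) and (iii) are where the combinatorics of Section \ref{combinatorics} is indispensable, and this is the step I expect to be the main obstacle. For (ii) one must show that the decomposition $I$ is reconstructible from any Gr\"obner basis lying in $H^{I}$, i.e.\ that no Gr\"obner basis is simultaneously a Connect Four sum according to two distinct decompositions; this should be read off from the way $\mathscr{C}(\Delta)$ together with the vanishing locus of the $d_{\alpha,\beta}$ pins down the successive ``cuts'' of the iterated Connect Four sum, using the indecomposability of the blocks. For (iii) one needs, beyond local closedness of each $H^{I}$, to rule out one stratum accumulating onto another --- that is, to check that $H^{I}$ is open in $\overline{\bigcup_{J}H^{J}}\cap U$ for a suitable open $U$ --- which again rests on the bookkeeping of decomposition numbers. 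A more routine technical difficulty, internal to (i), is the explicit computation of the equations and inequations defining $H^{I}$ from the universal morphism $\tau$.
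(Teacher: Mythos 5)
Your plan (restrict to each summand $Y^{I}$, identify its image as a locally closed $H^{I}$, show the strata are disjoint and each open in their union) is broadly the right shape, but the steps you rely on are exactly the content that needs proof, and the mechanisms you sketch for them would not work as stated. The main problem is your description of $H^{I}$ in step (i): there is no ``block'' pattern in the structure constants $d_{\alpha,\beta}$ of a Gr\"obner basis in the image of $\tau^{I}$. The reduced Gr\"obner basis of $\cap_{i\in I}(\langle J_i\rangle+\langle x_n-b_i\rangle)$ is produced by Lagrange interpolation in $x_n$ (with denominators $b_i-b_j$) followed by a reduction process, so every coefficient $c^{I}_{\alpha,\beta}$ is a rational expression mixing all the pieces $(J_i,b_i)$; no ``cross-block constants'' vanish and no subset of the $d_{\alpha,\beta}$ is literally the data of the pieces. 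Consequently the proposed inverse $H^{I}\to Y^{I}$ obtained by ``selecting subsets of the $d_{\alpha,\beta}$ and inverting the triangular recombination'' is not available: recovering the $b_i$ from a $B$-valued point amounts to factoring the monic degree-$\#I$ polynomial in $x_n$ contained in the ideal into linear factors with unit differences, which is not a polynomial operation in the coefficients and only makes sense once one already knows the point lies in the image -- circular as a definition of the inverse. You also never address the symmetry quotient: $Y^{I}=\widehat{Y}^{I}/G$ when several $\Delta_i$ coincide, so the pieces can at best be recovered as a multiset, and defining a morphism to the quotient on the level of functors of points requires a separate argument (this is why the paper's statement only evaluates the relevant functors on algebras without nontrivial idempotents). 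Finally, your (ii) and (iii) are flagged as ``to be read off'' from the combinatorics, but disjointness of images alone does not make a coproduct of immersions an immersion, so openness of each stratum in the union genuinely has to be established, and you give no argument.

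The paper proceeds differently and avoids describing $H^{I}$ by explicit equations. It first identifies $\Hi^{\prec\Delta_i}_{\overline{S}/k}\times\mathbb{A}^1_k$ with the Gr\"obner stratum $\Hi^{\prec(\Delta_i\times\{0\})}_{S/k}$, hence with a locally closed subscheme of $\Hi^{r_i}_{S/k}$. The real work is a general lemma (Lemma \ref{productimmersion}): given immersions $c_i\colon Y_i\hookrightarrow\Hi^{r_i}_{S/k}$ and a subscheme $U$ of the product on which the families $c_i(y_i)$ are pairwise disjoint, the ``union of families'' morphism $U\to\Hi^{r}_{S/k}$ is an immersion, and it descends to an immersion of $U/S_m$ when the factors coincide. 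This is proved not by constructing an inverse but by testing against an arbitrary $\mathrm{Spec}\,B\to\Hi^{r}_{S/k}$: one constructs an ideal $I\subset B$ cutting out the locus where the universal family splits into the required direct summands, forms the fibered products $X_i$ with each $Y_i$, intersects them, and verifies the universal property (uniqueness on multisets in the symmetrized case). Applying the lemma first within each block $I_j$ (part (b), handling the $S_{h_j}$-quotient) and then across distinct blocks (part (a)) yields an immersion $Y^{I}\hookrightarrow\Hi^{r}_{S/k}$, and Theorem \ref{connectfourthm} shows this factors through the locally closed subscheme $\Hi^{\prec\Delta}_{S/k}$ as $\tau$. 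If you want to salvage your approach, the functorial splitting argument of that lemma is precisely the missing idea behind your steps (i)--(iii).
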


This theorem is interesting in its own right, as it reveals some of the structure of the Hilbert scheme of points. 
Indeed, as a topological space, $\Hi^{r}_{S/k}$ is the coproduct of all $\Hi^{\prec\Delta}_{S/k}$, 
where $\Delta$ runs through all standard sets of size $r$ in $\mathbb{N}^n$, see \eqref{rdelta} below. 
Theorem \ref{immersion} sheds some light on subschemes of $\Hi^{\prec\Delta}_{S/k}$, 
given by the summands of $Y^\Delta$. 

However, the C4 morphism will turn out to be particularly interesting 
when restricted to a certain subscheme $Y^{\Delta,\et}$ of $Y^\Delta$. 
More precisely, in Section \ref{reducedmoduli} we will study a subfunctor 
$\HHi^{\prec\Delta,\et}_{S/k}$ of $\HHi^{\prec\Delta}_{S/k}$. 
Here is a brief description of that subfunctor. 
Given a $k$-algebra $B$, an element of $\HHi^{\prec\Delta}_{S/k}(B)$ 
is the reduced Gr\"obner basis $f_{\alpha}$, for $\alpha\in\mathscr{C}(\Delta)$, of an ideal $I \subset S \otimes_k B$. 
Let us write $Q$ for the $B$-algebra $S \otimes_k B/I$. Then $Q$ is a free $B$-module. 
In particular, the morphism ${\rm Spec}\,Q\to{\rm Spec}\,B$ corresponding to the canonical ring homomorphism 
$B\to Q$ is surjective and flat of degree $r$. 
Now the functor $\HHi^{\prec\Delta,\et}_{S/k}$ attaches to a $k$-algebra $B$ 
the set of all reduced Gr\"obner basis $f_{\alpha}$, for $\alpha\in\mathscr{C}(\Delta)$, 
such that the morphism ${\rm Spec}\,Q\to{\rm Spec}\,B$ is surjective and \'etale of degree $r$. 
We will show that $\HHi^{\prec\Delta,\et}_{S/k}$ is representable by an open subscheme 
$\Hi^{\prec\Delta,\et}_{S/k}$ of $\Hi^{\prec\Delta}_{S/k}$. 
This scheme is the moduli space of families of $r$ distinct points in affine $n$-space
whose defining ideal has standard set $\Delta$. 

In Section \ref{connectonreduced}, we define an open subscheme $Y^{\Delta,\et}$ of $Y^\Delta$. 
We will show that the restriction to $Y^{\Delta,\et}$ of the C4 morphism yields a morphism 
$\tau^\et:Y^{\Delta,\et}\to\Hi^{\prec\Delta,\et}_{S/k}$. 
Upon considering the projections
\begin{equation}\label{qj}
  q_{j}:\mathbb{N}^n\to\mathbb{N}^{n-j+1}:
  (\beta_{1},\ldots,\beta_{n})\mapsto(\beta_{j},\ldots,\beta_{n}) ,
\end{equation}
for $j=1,\ldots,n$, we will prove the following Theorem: 

\begin{thm}\label{mainthm}
  \begin{enumerate}
    \item[(i)] At the level of topological spaces, the C4 morphism $\tau^\et$ is a homeomorphism. 
    \item[(ii)] If ${\rm Spec}\,k$ is irreducible, then $\Hi^{\prec\Delta,\et}_{S/k}$ is equidimensional over ${\rm Spec}\,k$ 
      of dimension $\sum_{j=1}^n\#q_{j}(\Delta)$. 
   \item[(iii)] In this case, the number of irreducible components of $\Hi^{\prec\Delta,\et}_{S/k}$ 
     equals the C4 decomposition number $d(\Delta)$. 
    \item[(iv)] If ${\rm Spec}\,k$ is irreducible and connected, 
      then also the number of connected components of $\Hi^{\prec\Delta,\et}_{S/k}$
      equals the C4 decomposition number $d(\Delta)$. 
  \end{enumerate}
\end{thm}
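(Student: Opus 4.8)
The plan is to deduce all three statements from Theorem~\ref{immersion} together with the description of $Y^\Delta$ as the coproduct $\coprod_I Y^I$ over the $d(\Delta)$ decompositions $I$ of $\Delta$; statement~(i) does the real work, and (ii) and (iii) follow almost formally once it is available.

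\emph{Part (i).} Since $Y^{\Delta,0}$ is open in $Y^\Delta$ and $\tau$ is an immersion by Theorem~\ref{immersion}, the corestriction $\tau^{0}\colon Y^{\Delta,0}\to\Hi^{\prec\Delta,0}_{S/k}$ is again an immersion, hence a homeomorphism of $Y^{\Delta,0}$ onto a locally closed subset of $\Hi^{\prec\Delta,0}_{S/k}$. It therefore suffices to prove that $\tau^{0}$ is surjective on underlying topological spaces. Both schemes are of finite type over $k$, so this can be checked on geometric points: one must show that for every algebraically closed field $\Omega$ over $k$, every configuration of $r$ distinct $\Omega$-points of $\mathbb{A}^n_\Omega$ whose defining ideal has standard set $\Delta$ lies in the image of $\tau^{0}(\Omega)$. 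This is the combinatorial heart of the theorem. Given such a configuration, one reads off how its points are grouped by the coordinate projections $q_j$ — first by the value of $x_n$, then recursively inside each fibre — obtains from this grouping a decomposition $I$ of $\Delta$, and checks that the corresponding point of $Y^I(\Omega)$ is mapped to the given configuration; this is exactly the universal form of the Connect Four analysis of reduced Gr\"obner bases of point configurations carried out in \cite{jpaa}. With surjectivity established, the locally closed image of the immersion $\tau^{0}$ is all of $\Hi^{\prec\Delta,0}_{S/k}$, so $\tau^{0}$ is a homeomorphism.

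\emph{Part (iii).} Since $Y^{\Delta,0}=\coprod_I Y^I$ is a coproduct of schemes, each $Y^I$ is open and closed in $Y^{\Delta,0}$; transporting through the homeomorphism of part~(i), the subsets $\tau^{0}(Y^I)$ are open and closed in $\Hi^{\prec\Delta,0}_{S/k}$ and partition it into $d(\Delta)$ pieces. It remains to prove that each $Y^I$ is irreducible, hence connected, whenever ${\rm Spec}\,k$ is irreducible and connected; then $\Hi^{\prec\Delta,0}_{S/k}$ has exactly $d(\Delta)$ irreducible and $d(\Delta)$ connected components. By the construction of $Y^I$ in Section~\ref{affines} from the strata $\Hi^{\prec\Delta_i,0}_{S/k}$ attached to the indecomposable summands $\Delta_i$ of $I$, together with affine-space factors recording the relative positions of the corresponding clusters of points, irreducibility of $Y^I$ reduces to irreducibility of $\Hi^{\prec\Delta',0}_{S/k}$ for indecomposable $\Delta'$. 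For this I would argue directly by induction on the number of variables, using the fibration of $\Hi^{\prec\Delta',0}_{S/k}$ over the open subscheme of $\mathbb{A}^{\#q_n(\Delta')}_k$ parametrizing the separable monic generator of the elimination ideal in $k[x_n]$, whose fibres are, \'etale-locally, products of \'etale Gr\"obner strata in $k[x_1,\dots,x_{n-1}]$ attached to the $x_n$-slices of $\Delta'$; the base case $n=1$ is the open subscheme of $\mathbb{A}^r_k$ of separable monic polynomials of degree $r$, which is irreducible and connected over irreducible connected ${\rm Spec}\,k$. Irreducibility and connectedness propagate up this fibration.

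\emph{Part (ii).} By part~(i) one has $\dim\Hi^{\prec\Delta,0}_{S/k}=\dim Y^{\Delta,0}=\max_I\dim Y^I$, and by part~(iii) each $Y^I$ is irreducible, so equidimensionality of $\Hi^{\prec\Delta,0}_{S/k}$ over ${\rm Spec}\,k$ is equivalent to the single equality $\dim Y^I=\sum_{j=1}^n\#q_j(\Delta)$ for every decomposition $I$. I would prove this by induction on $\#\Delta$: for $\#\Delta=1$ one has $\Hi^{\prec\Delta,0}_{S/k}\cong\mathbb{A}^n_k$, of relative dimension $n=\sum_{j=1}^n\#q_j(\Delta)$, and the inductive step combines the relative dimensions $\sum_j\#q_j(\Delta_i)$ of the blocks with the dimensions of the position data, matching the total against $\sum_{j}\#q_j(\Delta)$ through the behaviour of the sets $q_j(\Delta)$ under Connect Four sums from Section~\ref{combinatorics} — essentially, that passing to $x_n$-slices makes $\#q_j$ additive for $j<n$, the same identity driving the fibration of part~(iii); alternatively, the iterated fibration over the successive elimination data exhibits $\Hi^{\prec\Delta,0}_{S/k}$ as smooth over $k$ of pure relative dimension $\sum_j\#q_j(\Delta)$, which yields equidimensionality at once. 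The main obstacle throughout is the surjectivity in part~(i): identifying, over an arbitrary algebraically closed residue field, the Connect Four decomposition realized by a given \'etale configuration and producing its preimage, and keeping the bookkeeping between the combinatorics of Sections~\ref{combinatorics}--\ref{connectonreduced} and the scheme-theoretic assertions consistent. Once part~(i) is in hand, parts~(ii) and~(iii) are near-formal consequences of the coproduct structure of $Y^{\Delta,0}$ and the inductive dimension count.
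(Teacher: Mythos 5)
Your part (i) is essentially sound, and it even takes a slightly different route from the paper: you test surjectivity of the immersion $\tau^0$ on geometric points over algebraically closed fields, which legitimately bypasses the paper's reduction to $k=\mathbb{Z}$, closed points and Galois descent (Lemma \ref{invariance}, Proposition \ref{descent}); over an algebraically closed field the slicing of a reduced configuration by its $x_n$-values together with the main theorem of \cite{jpaa} does produce a preimage in some $Y^{I,0}$, exactly as you say.

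The genuine gap is in part (iii), and it is twofold. First, the open-closed pieces $\tau^0(Y^{I,0})$ are indexed by the set $\mathcal{I}$ of decompositions of $\Delta$, and $\#\mathcal{I}$ is \emph{not} $d(\Delta)$ in general: $d(\Delta)$ is defined by the recursion \eqref{functionald} and counts \emph{iterated} decompositions (admissible subgraphs as in Lemma \ref{graphlemma}); the equality $\#\mathcal{I}=d(\Delta)$ holds only for $n\leq 3$, and already the $\mathcal{D}_4$ example of Figure \ref{widegraph} has $d(\Delta)>\#\mathcal{I}$. Second, and correspondingly, $Y^{I,0}$ is \emph{not} irreducible in general: it is built from the strata $\Hi^{\prec\Delta_i,0}_{\overline{S}/k}$, where the $\Delta_i\in\mathcal{D}_{n-1}$ are arbitrary (there is no notion of ``indecomposable summand'' in play, and no reduction to such a case), and by the theorem applied inductively each such stratum has $d(\Delta_i)$ components as soon as $n-1\geq 3$. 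The correct statement is that $Y^{I,0}$ has $\prod_{j=1}^{m}\binom{d(I_j)+h_j-1}{h_j}$ irreducible components, the binomial coefficients arising from the quotient by $S_{h_j}$ (one component per \emph{multiset} of components of the identical factors), and summing over $I\in\mathcal{I}$ reproduces exactly the recursion \eqref{functionald}; this inductive bookkeeping, which is the heart of the paper's proof of (iii), is absent from your argument. Worse, your proposed fibration argument for irreducibility of $\Hi^{\prec\Delta',0}_{S/k}$ nowhere uses any special property of $\Delta'$ and so would prove irreducibility of \emph{every} reduced Gr\"obner stratum, contradicting the original example $\Delta=\{0,e_1,e_2,e_3\}$ of Section \ref{original}, whose stratum has two components; the step that fails is that the fibre over a fixed elimination datum is not a product of strata attached to ``the $x_n$-slices of $\Delta'$'', because the multiset of slice standard sets is not determined by $\Delta'$ --- its possible values are precisely the elements of $\mathcal{I}$, and different choices land in different connected components.

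Since your part (ii) leans on the irreducibility claim of (iii), it inherits this problem, but it is repairable: one only needs each $Y^{I,0}$ to be of pure relative dimension $\sum_{j=1}^n\#q_j(\Delta)$, which follows by induction on $n$ (not on $\#\Delta$: when $\#q_n(\Delta)=1$ the unique block $\Delta_1$ has $\#\Delta_1=\#\Delta$, so induction on $\#\Delta$ does not terminate) from the product description of $Y^{I,0}$ and the additivity of $\#q_j$ under Connect Four sums, as in the paper.
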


This theorem proves a version of a conjecture of Bernd Sturmfels, 
which he made on the basis of an example which we shall present in Section \ref{combinatorics}. 

\subsection*{Acknowledgments}

The author wishes to thank the following large group of people: 
Graham Ellis and G\"otz Pfeiffer, and all other people at the {\it First de Br\'{u}n Workshop} in Galway, Ireland;
Bernd Sturmfels for sharing his deep insight; 
my colleagues Michael Spie{\ss}, Thomas Zink, Eike Lau (thanks for the support in \'etale questions!) 
and Vytautas Paskunas in Bielefeld for so generously teaching me a lot of beautiful mathematics; 
Mike Stillman for his warm hospitality and lots of support when I moved to Cornell University; 
Roy Skjelnes for inviting me to Stockholm; 
Robin Hartshorne, Diane Maclagan, Gregory G. Smith, and all participants of the workshop 
{\it Components of Hilbert Schemes} held at American Institute of Mathematics in Palo Alto, California. 
Special thanks go to Laurent Evain for sketching Theorem \ref{immersion} --- his comments 
greatly improved the quality of the research presented here. 
Many thanks to the anonymous referee for his or her careful reading and pointing out my mistakes. 


\section{Combinatorics of lexicographic standard sets}\label{combinatorics}


\subsection{Connect Four addition of standard sets}

We start by reviewing the most essential definition of our paper \cite{jpaa}. 
Let $\Delta$ and $\Delta^\prime$ be standard sets in $\mathbb{N}^n$. 
In addition to the projection $q_{n}$ we introduced in \eqref{qj}, we consider its complement, $q^n$, hence
\begin{equation*}
  \begin{split}
    q^n:\mathbb{N}^n&\to\mathbb{N}^{n-1}:(\beta_{1},\ldots,\beta_{n})\mapsto(\beta_{1},\ldots,\beta_{n-1}) ,\\
    q_{n}:\mathbb{N}^n&\to\mathbb{N}:(\beta_{1},\ldots,\beta_{n})\mapsto\beta_{n} .
  \end{split}
\end{equation*}
We define {\it addition of standard sets} by the formula
\begin{equation*}
  \Delta + \Delta^\prime := \left\lbrace
  \begin{array}{c}
    \beta\in\mathbb{N}^n: \\
    q_{n}(\beta) < \#((q^n)^{-1}((q^n)(\beta))\cap\Delta) + \#((q^n)^{-1}((q^n)(\beta))\cap\Delta^\prime) 
  \end{array}
  \right\rbrace .
\end{equation*}
This addition is map best visualized as an analogue of dropping discs in the popular game {\it Connect Four} 
(see Figure \ref{connectfigure}). We define $\mathcal{D}_{n}$ to be the set of all finite standard sets in $\mathbb{N}^n$.
The following facts hold true (see \cite{jpaa}).
\begin{enumerate}
  \item[(i)] $(\mathcal{D}_{n},+)$ is a commutative and associative monoid with neutral element $\emptyset$. 
  \item[(ii)] For all $\Delta,\Delta^\prime\in\mathcal{D}_{n}$, we have $\#(\Delta+\Delta^\prime)=\#\Delta+\#\Delta^\prime$. 
  \item[(iii)] We embed $\mathcal{D}_{n-1}$ into $\mathcal{D}_{n}$ by sending $\Delta \subset \mathbb{N}^{n-1}$
  to the subset $\Delta\times\{0\}$ of $\mathbb{N}^n$. This yields an addition map
  $+:\mathcal{D}_{n-1}\times\mathcal{D}_{n-1}\to\mathcal{D}_{n}$, 
  and more generally, $+:\mathcal{D}_{n-1}\times\mathcal{D}_{n}\to\mathcal{D}_{n}$.
  \item[(iv)] For the time being, let $k$ be a field. 
  Let $A \subset k^n$ be a finite set of closed $k$-rational points of $\mathbb{A}^n_k$. 
  Denote by $D(A)\in\mathcal{D}_{n}$ the standard set of the ideal $I \subset S$ defining the closed subscheme 
  $A \subset \mathbb{A}^n_k$. 
  For all $\lambda\in k$, denote by $A_{\lambda}$ the intersection of $A$ with the hyperplane $\{x_{n}=\lambda\}$
  in $\mathbb{A}^n_k$. We understand $A_{\lambda}$ to be a closed subscheme of $\mathbb{A}^{n-1}$, 
  and accordingly, define $D(A_{\lambda})\in\mathcal{D}_{n-1}$ to be the attached standard set. 
  In particular, almost all $A_{\lambda}$, therefore also almost all $D(A_{\lambda})$, are empty. 
  By the main theorem of \cite{jpaa}, we have $D(A)=\sum_{\lambda\in k}D(A_{\lambda})$, 
  where the addition of standard sets is defined as in (iii). 
  \item[(v)] By induction over $n$, this gives a complete description of $D(A)$. 
\end{enumerate}


\subsection{C4 Decompositions of standard sets}\label{subsectiondecomps}

\begin{dfn}
  Let $\Delta\in\mathcal{D}_n$. 
  A finite multiset $\{\Delta_i: i\in I\}$ of elements of $\mathcal{D}_{n-1}$
  is called a {\it Connect Four decomposition}, or, for short, a {\it C4 decomposition} of $\Delta$ if 
  \begin{equation}\label{sum}
    \Delta=\sum_{i \in I}\Delta_i ,
  \end{equation}
  where the sum is defined as in (iii) above. 
  In what follows, we will often write the indexing set of the C4 decomposition as a coproduct 
  \begin{equation}\label{ij}
    I = I_1 \coprod \ldots \coprod I_m
  \end{equation}
  such that for all $j$, all elements of the multiset $\{\Delta_i: i \in I_j\}$ agree, 
  and elements of of $I_j$ and $I_b$ do not agree if $j \neq b$. 
\end{dfn}

For $\Delta\in\mathcal{D}_{n}$ as above, let $h = \#q_{n}(\Delta)$, the {\it height} of $\Delta$. 
Then clearly each C4 decomposition of $\Delta$ is indexed by a set $I$ of size $h$. 
Moreover, each $\Delta\in\mathcal{D}_{n}$ admits at least one C4 decomposition. 
That decomposition is given by slicing $\Delta$ horizontally into $h$ standard subsets --- more
precisely, to consider
\begin{equation*}
  \Delta_i := q^n(\Delta\cap\{\beta\in\mathbb{N}^n: q_{n}(\beta)=i\}) ,
\end{equation*}
for $i = 0, \ldots, h-1$. Then \eqref{sum} holds true for that collection of $\Delta_i$. 
Figure \ref{connectfigure} shows an example in which we add eight elements of $\mathcal{D}_{3}$
(which we can also view as the embeddings of elements of $\mathcal{D}_{2}$)
and get another element of $\mathcal{D}_{3}$. 
The C4 decomposition depicted there is, however, not obtained by the trivial slicing process just described. 
In particular, the standard set of Figure \ref{connectfigure} admits more than one C4 decomposition. 

\begin{center}
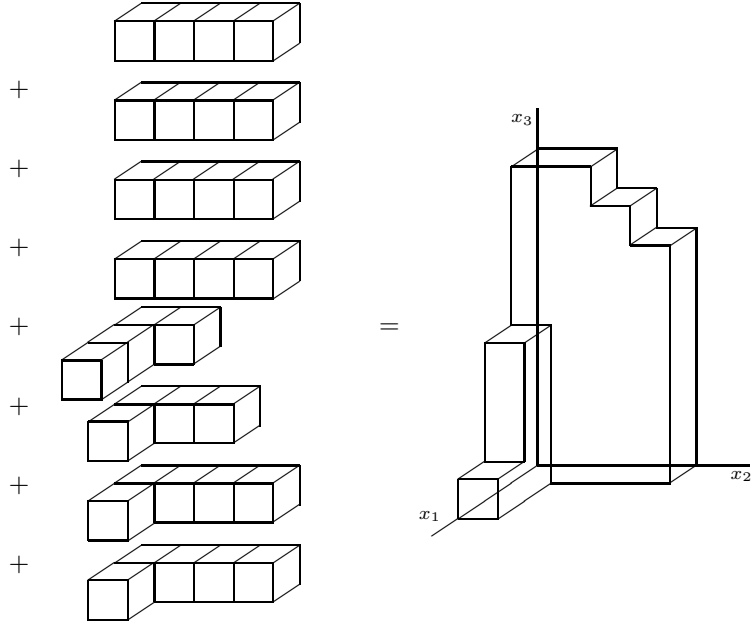
\begin{figure}[ht]
  \begin{picture}(320,250)
    \put(20,20){\small $+$}
    \put(70,30){\line(1,0){60}}
    \put(60,23.2){\line(1,0){60}}
    \put(50,16.6){\line(1,0){15}}
    \put(50,16.6){\line(0,-1){15}}
    \put(50,1.6){\line(1,0){15}}
    \put(65,1.6){\line(0,1){15}}
    \put(65,1.6){\line(3,2){10}}
    \put(75,8.4){\line(0,1){15}}
    \put(75,8.4){\line(1,0){45}}
    \put(90,8.4){\line(0,1){15}}
    \put(105,8.4){\line(0,1){15}}
    \put(120,8.4){\line(0,1){15}}
    \put(120,8.5){\line(3,2){10}}
    \put(130,30){\line(0,-1){15}}
    \put(70,30){\line(-3,-2){20}}
    \put(85,30){\line(-3,-2){20}}
    \put(100,30){\line(-3,-2){10}}
    \put(115,30){\line(-3,-2){10}}
    \put(130,30){\line(-3,-2){10}}
    \put(20,50){\small $+$}
    \put(70,60){\line(1,0){60}}
    \put(60,53.2){\line(1,0){60}}
    \put(50,46.6){\line(1,0){15}}
    \put(50,46.6){\line(0,-1){15}}
    \put(50,31.6){\line(1,0){15}}
    \put(65,31.6){\line(0,1){15}}
    \put(65,31.6){\line(3,2){10}}
    \put(75,38.4){\line(0,1){15}}
    \put(75,38.4){\line(1,0){45}}
    \put(90,38.4){\line(0,1){15}}
    \put(105,38.4){\line(0,1){15}}
    \put(120,38.4){\line(0,1){15}}
    \put(120,38.5){\line(3,2){10}}
    \put(130,60){\line(0,-1){15}}
    \put(70,60){\line(-3,-2){20}}
    \put(85,60){\line(-3,-2){20}}
    \put(100,60){\line(-3,-2){10}}
    \put(115,60){\line(-3,-2){10}}
    \put(130,60){\line(-3,-2){10}}
    \put(20,80){\small $+$}
    \put(70,90){\line(1,0){45}}
    \put(60,83.2){\line(1,0){45}}
    \put(50,76.6){\line(1,0){15}}
    \put(50,76.6){\line(0,-1){15}}
    \put(50,61.6){\line(1,0){15}}
    \put(65,61.6){\line(0,1){15}}
    \put(65,61.6){\line(3,2){10}}
    \put(75,68.4){\line(0,1){15}}
    \put(75,68.4){\line(1,0){30}}
    \put(90,68.4){\line(0,1){15}}
    \put(105,68.4){\line(0,1){15}}
    \put(105,68.5){\line(3,2){10}}
    \put(115,90){\line(0,-1){15}}
    \put(70,90){\line(-3,-2){20}}
    \put(85,90){\line(-3,-2){20}}
    \put(100,90){\line(-3,-2){10}}
    \put(115,90){\line(-3,-2){10}}
    \put(20,110){\small $+$}
    \put(70,120){\line(1,0){30}}
    \put(60,113.2){\line(1,0){30}}
    \put(50,106.4){\line(1,0){15}}
    \put(40,99.8){\line(1,0){15}}
    \put(40,99.8){\line(0,-1){15}}
    \put(40,85){\line(1,0){15}}
    \put(55,99.8){\line(0,-1){15}}
    \put(55,85){\line(3,2){20}}
    \put(65,106.4){\line(0,-1){15}}
    \put(75,113.2){\line(0,-1){15}}
    \put(70,120){\line(-3,-2){30}}
    \put(85,120){\line(-3,-2){30}}
    \put(100,120){\line(-3,-2){10}}
    \put(75,98.2){\line(1,0){15}}
    \put(90,98.2){\line(0,1){15}}
    \put(90,98.5){\line(3,2){10}}
    \put(100,120){\line(0,-1){15}}
    \put(20,140){\small $+$}
    \put(70,145){\line(1,0){60}}
    \put(60,138.2){\line(1,0){60}}
    \put(60,138.2){\line(0,-1){15}}
    \put(75,138.2){\line(0,-1){15}}
    \put(90,138.2){\line(0,-1){15}}
    \put(105,138.2){\line(0,-1){15}}
    \put(120,138.2){\line(0,-1){15}}
    \put(130,145){\line(0,-1){15}}
    \put(60,123.2){\line(1,0){60}}
    \put(70,145){\line(-3,-2){10}}
    \put(85,145){\line(-3,-2){10}}
    \put(100,145){\line(-3,-2){10}}
    \put(115,145){\line(-3,-2){10}}
    \put(130,145){\line(-3,-2){10}}
    \put(120,123.2){\line(3,2){10}}
    \put(20,170){\small $+$}
    \put(70,175){\line(1,0){60}}
    \put(60,168.2){\line(1,0){60}}
    \put(60,168.2){\line(0,-1){15}}
    \put(75,168.2){\line(0,-1){15}}
    \put(90,168.2){\line(0,-1){15}}
    \put(105,168.2){\line(0,-1){15}}
    \put(120,168.2){\line(0,-1){15}}
    \put(130,175){\line(0,-1){15}}
    \put(60,153.2){\line(1,0){60}}
    \put(70,175){\line(-3,-2){10}}
    \put(85,175){\line(-3,-2){10}}
    \put(100,175){\line(-3,-2){10}}
    \put(115,175){\line(-3,-2){10}}
    \put(130,175){\line(-3,-2){10}}
    \put(120,153.2){\line(3,2){10}}
    \put(20,200){\small $+$}
    \put(70,205){\line(1,0){60}}
    \put(60,198.2){\line(1,0){60}}
    \put(60,198.2){\line(0,-1){15}}
    \put(75,198.2){\line(0,-1){15}}
    \put(90,198.2){\line(0,-1){15}}
    \put(105,198.2){\line(0,-1){15}}
    \put(120,198.2){\line(0,-1){15}}
    \put(130,205){\line(0,-1){15}}
    \put(60,183.2){\line(1,0){60}}
    \put(70,205){\line(-3,-2){10}}
    \put(85,205){\line(-3,-2){10}}
    \put(100,205){\line(-3,-2){10}}
    \put(115,205){\line(-3,-2){10}}
    \put(130,205){\line(-3,-2){10}}
    \put(120,183.2){\line(3,2){10}}
    \put(70,235){\line(1,0){60}}
    \put(60,228.2){\line(1,0){60}}
    \put(60,228.2){\line(0,-1){15}}
    \put(75,228.2){\line(0,-1){15}}
    \put(90,228.2){\line(0,-1){15}}
    \put(105,228.2){\line(0,-1){15}}
    \put(120,228.2){\line(0,-1){15}}
    \put(130,235){\line(0,-1){15}}
    \put(60,213.2){\line(1,0){60}}
    \put(70,235){\line(-3,-2){10}}
    \put(85,235){\line(-3,-2){10}}
    \put(100,235){\line(-3,-2){10}}
    \put(115,235){\line(-3,-2){10}}
    \put(130,235){\line(-3,-2){10}}
    \put(120,213.2){\line(3,2){10}}
    \put(220,60){\line(1,0){80}}
    \put(220,60){\line(-3,-2){40}}
    \put(220,60){\line(0,1){135}}
    \put(175,40){\tiny $x_{1}$}
    \put(293,55){\tiny $x_{2}$}
    \put(210,190){\tiny $x_{3}$}
    \put(160,110){\small $=$}
    \put(225,53.2){\line(1,0){45}}
    \put(225,53.2){\line(0,1){60}}
    \put(225,53.2){\line(-3,-2){20}}
    \put(280,60){\line(0,1){90}}
    \put(280,60){\line(-3,-2){10}}
    \put(270,53.2){\line(0,1){90}}
    \put(210,173.2){\line(1,0){30}}
    \put(210,173.2){\line(3,2){10}}
    \put(220,180){\line(1,0){30}}
    \put(210,113.2){\line(0,1){60}}
    \put(210,113.2){\line(1,0){15}}
    \put(210,113.2){\line(-3,-2){10}}
    \put(200,61.4){\line(0,1){45}}
    \put(200,61.4){\line(1,0){15}}
    \put(215,106.4){\line(-1,0){15}}
    \put(215,106.4){\line(0,-1){45}}
    \put(215,106.4){\line(3,2){10}}
    \put(190,39.8){\line(1,0){15}}
    \put(190,39.8){\line(0,1){15}}
    \put(190,54.8){\line(1,0){15}}
    \put(190,54.8){\line(3,2){10}}
    \put(205,54.8){\line(3,2){10}}
    \put(205,54.8){\line(0,-1){15}}
    \put(270,143.2){\line(-1,0){15}}
    \put(270,143.2){\line(3,2){10}}
    \put(255,143.2){\line(0,1){15}}
    \put(255,143.4){\line(3,2){10}}
    \put(280,150){\line(-1,0){15}}
    \put(265,150){\line(0,1){15}}
    \put(255,158.2){\line(-1,0){15}}
    \put(255,158.2){\line(3,2){10}}
    \put(265,165){\line(-1,0){15}}
    \put(240,158.4){\line(3,2){10}}
    \put(240,158.2){\line(0,1){15}}
    \put(250,165){\line(0,1){15}}
    \put(240,173.2){\line(3,2){10}}
  \end{picture}
\caption{C4 addition of standard sets}
\label{connectfigure}
\end{figure}
\end{center}

Evidently, for $n=1$ or $n=2$, each $\Delta\in\mathcal{D}_{n}$ admits only one C4 decomposition, 
namely, the one given by the above-described slicing process. 
(We understand $\mathcal{D}_{0}$ to consist of exactly one point.) 
Bernd Sturmfels observed that elements of $\mathcal{D}_n$ in general admit more than one C4 decomposition if $n\geq3$, 
the smallest example being 
\begin{equation*}
  \Delta := \{0, e_{1}, e_{2}, e_{3}\}\in\mathcal{D}_{3} .
\end{equation*}
The two C4 decompositions of $\Delta$ are $\{\{0,e_{1},e_{2}\},\{0\}\}$ and $\{\{0,e_{1}\},\{0,e_{2}\}\}$,
see Figure \ref{nonuniqueness}. 
For better visibility, that picture does not show the elements $\Delta_i\in\mathcal{D}_{2}$ 
whose C4 sum is $\Delta$, yet rather their embeddings $\Delta_i\times\{0\}$ into $\mathcal{D}_{3}$. 
We will return to this standard set later, see Section \ref{original} below. 
As we have seen, also the standard set of Figure \ref{connectfigure} admits several different C4 decompositions. 
Nonuniqueness of the C4 decomposition of $\Delta$ motivates the following notion.

\begin{center}
\begin{figure}
  \begin{picture}(280,110)
    \put(40,50){\line(1,0){45}}
    \put(40,50){\line(-3,-2){30}}
    \put(40,50){\line(0,1){45}}
    \put(5,37){\tiny $x_{1}$}
    \put(78,44){\tiny $x_{2}$}
    \put(30,89){\tiny $x_{3}$}
    \put(55,65){\line(1,0){15}}
    \put(40,80){\line(1,0){15}}
    \put(55,65){\line(0,1){15}}
    \put(70,50){\line(0,1){15}}
    \put(29.8,58.2){\line(-3,-2){10}}
    \put(40,80){\line(-3,-2){10}}
    \put(44.8,43.2){\line(-3,-2){10}}
    \put(55,65){\line(-3,-2){20}}
    \put(55,80){\line(-3,-2){10}}
    \put(70,50){\line(-3,-2){10}}
    \put(70,65){\line(-3,-2){10}}
    \put(44.8,43.2){\line(1,0){15}}
    \put(29.8,58.2){\line(1,0){30}}
    \put(29.8,73.2){\line(1,0){15}}
    \put(29.8,58.2){\line(0,1){15}}
    \put(44.8,43.2){\line(0,1){30}}
    \put(59.8,43.2){\line(0,1){15}}
    \put(19.8,36.6){\line(1,0){15}}
    \put(19.8,36.6){\line(0,1){15}}
    \put(34.8,51.6){\line(-1,0){15}}
    \put(34.8,51.6){\line(0,-1){15}}
    \put(100,50){\small $=$}
    \put(140,45){\line(1,0){30}}
    \put(170,30){\line(0,1){15}}
    \put(140,45){\line(-3,-2){20}}
    \put(144.8,23.2){\line(-3,-2){10}}
    \put(155,45){\line(-3,-2){20}}
    \put(170,30){\line(-3,-2){10}}
    \put(170,45){\line(-3,-2){10}}
    \put(144.8,23.2){\line(1,0){15}}
    \put(129.8,38.2){\line(1,0){30}}
    \put(144.8,23.2){\line(0,1){15}}
    \put(159.8,23.2){\line(0,1){15}}
    \put(119.8,16.6){\line(1,0){15}}
    \put(119.8,16.6){\line(0,1){15}}
    \put(134.8,31.6){\line(-1,0){15}}
    \put(134.8,31.6){\line(0,-1){15}}
    \put(138,57){\small $+$}
    \put(155,80){\line(-3,-2){10}}
    \put(155,95){\line(-3,-2){10}}
    \put(140,95){\line(-3,-2){10}}
    \put(155,95){\line(-1,0){15}}
    \put(155,95){\line(0,-1){15}}
    \put(129.8,73.2){\line(1,0){15}}
    \put(129.8,73.2){\line(0,1){15}}
    \put(144.8,88.2){\line(-1,0){15}}
    \put(144.8,88.2){\line(0,-1){15}}
    \put(188,50){\small $=$}
    \put(245,45){\line(-1,0){15}}
    \put(245,45){\line(0,-1){15}}
    \put(245,30){\line(-3,-2){20}}
    \put(230,45){\line(-3,-2){20}}
    \put(245,45){\line(-3,-2){20}}
    \put(234.8,38.2){\line(-1,0){15}}
    \put(234.8,38.2){\line(0,-1){15}}
    \put(209.8,16.6){\line(1,0){15}}
    \put(209.8,16.6){\line(0,1){15}}
    \put(224.8,31.6){\line(-1,0){15}}
    \put(224.8,31.6){\line(0,-1){15}}
    \put(228,57){\small $+$}
    \put(230,95){\line(-3,-2){10}}
    \put(245,95){\line(-3,-2){10}}
    \put(260,80){\line(-3,-2){10}}
    \put(260,95){\line(-3,-2){10}}
    \put(230,95){\line(1,0){30}}
    \put(260,95){\line(0,-1){15}}
    \put(219.8,73.2){\line(1,0){30}}
    \put(219.8,73.2){\line(0,1){15}}
    \put(234.8,73.2){\line(0,1){15}}
    \put(249.8,73.2){\line(0,1){15}}
    \put(219.8,88.2){\line(1,0){30}}
  \end{picture}
\caption{A standard set admitting two C4 decompositions}
\label{nonuniqueness}
\end{figure}
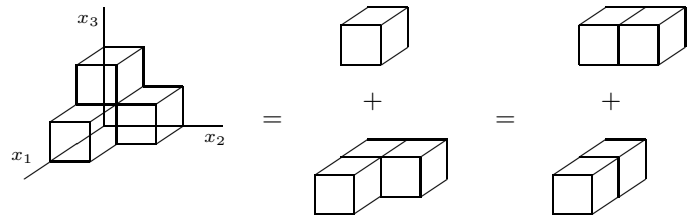
\end{center}

\begin{dfn}\label{graphs}
  Let $\Delta\in\mathcal{D}_n$. We define two labeled graphs attached to $\Delta$. 
  \begin{enumerate}
    \item[(i)] The {\it C4 decomposition graph of $\Delta$} is the rooted tree with the following nodes,
    \begin{itemize}
      \item the root $\Delta$, whose label is $n$;
      \item one node for each C4 decomposition $\{\Delta_i: i\in I\}$ of $\Delta$, each having the label $n-\frac{1}{2}$;
      \item one node for each $\Delta_i$ appearing in a C4 decomposition of $\Delta$, each having the label $n-1$;
    \end{itemize}
    and the following edges,
    \begin{itemize}
      \item one edge between $\Delta$ and each C4 decomposition $\{\Delta_i: i\in I\}$; and
      \item for all C4 decompositions $\{\Delta_i: i\in I\}$, one edge between that C4 decomposition and each standard set 
        $\Delta_i$ appearing therein. 
    \end{itemize}
    \item[(ii)] In the C4 decomposition graph of $\Delta$, we replace each leaf, i.e., 
    each node of the form $\Delta_i\in\mathcal{D}_{n-1}$, with its C4 decomposition graph. 
    In the resulting graph, a rooted tree with root $\Delta$, we replace each leaf, i.e., 
    each node of the form $\Delta_i\in\mathcal{D}_{n-2}$, 
    with its C4 decomposition graph. We repeat this process $n$ times,
    in the last step adding leaves that lie in $\mathcal{D}_{0}$. 
    We obtain a rooted tree with root $\Delta$ whose nodes are labeled 
    $n,n-\frac{1}{2},n-1,n-\frac{3}{2},\ldots,\frac{1}{2},0$. 
    We call that graph the {\it iterated C4 decomposition graph of $\Delta$}. 
  \end{enumerate}
\end{dfn}

Figure \ref{graph} shows the C4 decomposition graph of an abstract $\Delta$ lying in some $\mathcal{D}_{n}$. 
We denoted the various indexing sets by $I^1, \ldots, I^t$ 
for avoiding confusion with the sets $I_1, \ldots, I_m$ of \eqref{ij}. 
Note that the numbers $h^1 := \# I^1, h^2 := \# I^2, \ldots, h^t := \# I^t$ 
appearing in the bottom line of Figure \ref{graph} are all equal to $\# q_n(\Delta)$. 
In the figure, 
we use an indexing of the nodes which is slightly different from the indexing we used in Definition \ref{graphs} above. 
Moreover, we we write the labels $n$, $n-\frac{1}{2}$ and $n-1$ in a separate column at the right hand side, 
as the labels are the same for all nodes on the same horizontal line. 
Figure \ref{longgraph} shows the iterated C4 decomposition graph of the standard set $\Delta=\{0,e_{1},e_{2},e_{3}\}$, 
which we also considered in Figure \ref{nonuniqueness} above. 

\begin{center}
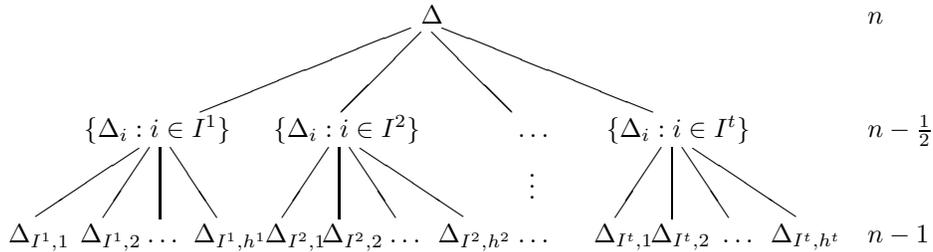
\begin{figure}[ht]
  \begin{picture}(420,140)
     \unitlength0.28mm
    \put(198,120){\small $\Delta$}
    \put(193,118){\line(-5,-2){100}}
    \put(198,116){\line(-1,-1){38}}
    \put(203,116){\line(1,-1){38}}
    \put(208,118){\line(5,-2){100}}
    \put(38,66){\small $\{\Delta_i: i\in I^1\}$}
    \put(128,66){\small $\{\Delta_i: i\in I^2\}$}
    \put(244,66){\small $\ldots$}
    \put(286,66){\small $\{\Delta_i: i\in I^t\}$}
    \put(64,61){\line(-3,-2){49}}
    \put(69,61){\line(-2,-3){22}}
    \put(74,61){\line(0,-1){34}}
    \put(79,61){\line(2,-3){22}}
    \put(2,16){\small $\Delta_{I^1,1}$}
    \put(36,16){\small $\Delta_{I^1,2}$}
    \put(68,16){\small $\ldots$}
    \put(90,16){\small $\Delta_{I^1,h^1}$}
    \put(154,61){\line(-2,-3){22}}
    \put(159,61){\line(0,-1){34}}
    \put(164,61){\line(2,-3){22}}
    \put(169,61){\line(3,-2){49}}
    \put(124,16){\small $\Delta_{I^2,1}$}
    \put(151,16){\small $\Delta_{I^2,2}$}
    \put(183,16){\small $\ldots$}
    \put(206,16){\small $\Delta_{I^2,h^2}$}
    \put(249,37){\small $\vdots$}
    \put(244,16){\small $\ldots$}
    \put(312,61){\line(-2,-3){22}}
    \put(317,61){\line(0,-1){34}}
    \put(322,61){\line(2,-3){22}}
    \put(327,61){\line(3,-2){49}}
    \put(280,16){\small $\Delta_{I^t,1}$}
    \put(307,16){\small $\Delta_{I^t,2}$}
    \put(341,16){\small $\ldots$}
    \put(364,16){\small $\Delta_{I^t,h^t}$}
    \put(410,120){\small $n$}
    \put(410,66){\small $n-\frac{1}{2}$}
    \put(410,16){\small $n-1$}
  \end{picture}
\caption{The C4 decomposition graph of a general $\Delta$}
\label{graph}
\end{figure}
\end{center}

\begin{center}
\begin{figure}
  \begin{picture}(420,330)
    \unitlength.28mm
    \put(170,310){\small $\{0,e_{1},e_{2},e_{3}\}$}
    \put(180,305){\line(-3,-2){52}}
    \put(214,305){\line(3,-2){52}}
    \put(90,260){\small $\{\{0,e_{1},e_{2}\},\{0\}\}$}
    \put(240,260){\small $\{\{0,e_{1}\},\{0,e_{2}\}\}$}
    \put(110,255){\line(-3,-2){52}}
    \put(140,255){\line(1,-2){17}}
    \put(254,255){\line(-1,-2){17}}
    \put(284,255){\line(3,-2){52}}
    \put(34,210){\small $\{0,e_{1},e_{2}\}$}
    \put(150.5,210){\small $\{0\}$}
    \put(222,210){\small $\{0,e_{1}\}$}
    \put(328,210){\small $\{0,e_{2}\}$}
    \put(51,205){\line(0,-1){35}}
    \put(158,205){\line(0,-1){35}}
    \put(236,205){\line(0,-1){35}}
    \put(343,205){\line(0,-1){35}}
    \put(25,160){\small $\{\{0,e_{1}\},\{0\}\}$}
    \put(143,160){\small $\{\{0\}\}$}
    \put(217,160){\small $\{\{0,e_{1}\}\}$}
    \put(322,160){\small $\{\{0\},\{0\}\}$}
    \put(40,155){\line(-1,-3){11.6}}
    \put(62,155){\line(2,-3){23}}
    \put(158,155){\line(0,-1){35}}
    \put(236,155){\line(0,-1){35}}
    \put(332,155){\line(-2,-3){23}}
    \put(354,155){\line(1,-3){11.6}}
    \put(12.5,110){\small $\{0,e_{1}\}$}
    \put(79,110){\small $\{0\}$}
    \put(150.5,110){\small $\{0\}$}
    \put(221.5,110){\small $\{0,e_{1}\}$}
    \put(301,110){\small $\{0\}$}
    \put(360,110){\small $\{0\}$}
    \put(25,105){\line(0,-1){35}}
    \put(86,105){\line(0,-1){35}}
    \put(158,105){\line(0,-1){35}}
    \put(236,105){\line(0,-1){35}}
    \put(308,105){\line(0,-1){35}}
    \put(369,105){\line(0,-1){35}}
    \put(3,60){\small $\{\{0\},\{0\}\}$}
    \put(71,60){\small $\{\{0\}\}$}
    \put(143,60){\small $\{\{0\}\}$}
    \put(214,60){\small $\{\{0\},\{0\}\}$}
    \put(293,60){\small $\{\{0\}\}$}
    \put(354,60){\small $\{\{0\}\}$}
    \put(16,55){\line(-1,-5){6.9}}
    \put(34,55){\line(1,-5){6.9}}
    \put(86,55){\line(0,-1){35}}
    \put(158,55){\line(0,-1){35}}
    \put(227,55){\line(-1,-5){6.9}}
    \put(245,55){\line(1,-5){6.9}}
    \put(308,55){\line(0,-1){35}}
    \put(369,55){\line(0,-1){35}}
    \put(1,10){\small $\{0\}$}
    \put(34.5,10){\small $\{0\}$}
    \put(79,10){\small $\{0\}$}
    \put(150,10){\small $\{0\}$}
    \put(212,10){\small $\{0\}$}
    \put(245.5,10){\small $\{0\}$}
    \put(301,10){\small $\{0\}$}
    \put(360.5,10){\small $\{0\}$}
    \put(410,310){\small $3$}
    \put(410,260){\small $2\frac{1}{2}$}
    \put(410,210){\small $2$}
    \put(410,160){\small $1\frac{1}{2}$}
    \put(410,110){\small $1$}
    \put(410,60){\small $\frac{1}{2}$}
    \put(410,10){\small $0$}
  \end{picture}
\caption{An iterated C4 decomposition graph}
\label{longgraph}
\end{figure}
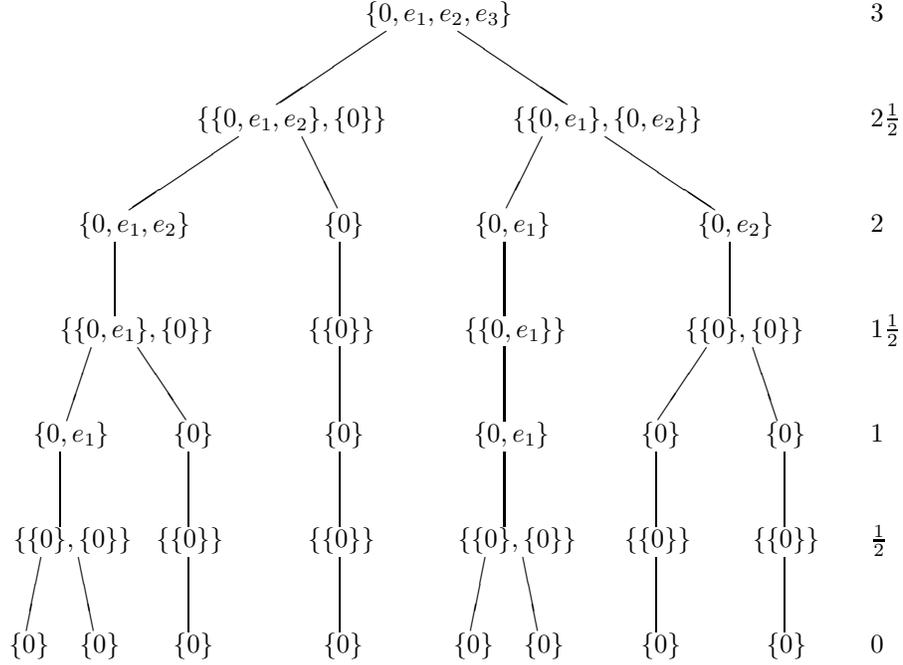
\end{center}

The iterated C4 decomposition graph encodes all possible ways to build $\Delta$ 
from $r = \#\Delta$ single points by means of iterated C4 addition of standard sets. 
For giving this observation a more concise meaning, 
we attach to the iterated C4 decomposition graph the following invariants. 

\begin{dfn}\label{numbers}
  Let $\Delta\in\mathcal{D}_{n}$, and let $\Gamma(\Delta)$ be its iterated C4 decomposition graph. 
  \begin{enumerate}
    \item[(i)] An {\it admissible subgraph of $\Gamma(\Delta)$} 
    is a subgraph $\Gamma^\prime$ of $\Gamma(\Delta)$ such that
      \begin{itemize}
        \item each node $x\in\Gamma^\prime$ of integer label $m$ 
          is connected with exactly one node of label $m-\frac{1}{2}$; and
        \item each node $x\in\Gamma^\prime$ of fractional label $m-\frac{1}{2}$ 
          is connected with all nodes of label $m-1$ from which there exists an edge to $x$ in $\Gamma(\Delta)$. 
      \end{itemize}
    \item[(ii)] We define the {\it C4 decomposition number} $d(\Delta)$ of the standard set $\Delta$ recursively as follows.
      \begin{itemize}
        \item If $n=1$ or $n=2$, we set $d(\Delta) := 1$. 
        \item If $n \geq 3$, proceed as follows. 
        We identify each C4 decomposition $\{\Delta_i: i \in I\}$ of $\Delta$ with its indexing set, $I$. 
        Let $I = I_1 \coprod \ldots \coprod I_{m(I)}$ be the C4 decomposition of the indexing set as in \eqref{ij}. 
        For all $j$, let $h_j := \#I_j$, and denote by $d(I_j)$ the C4 decomposition number of $\Delta_i$, for any $i \in I_j$. 
        Then we set 
        \begin{equation}\label{functionald}
          d(\Delta) := \sum_{I \in \mathcal{I}} \prod_{j = 1}^{m(I)} {d(I_j) + h_j - 1 \choose h_j} ,
        \end{equation}
        where the sum is taken over all C4 decompositions of $\Delta$, identified with their indexing sets, $I$. 
      \end{itemize}
  \end{enumerate}
\end{dfn}

At this point, we have to make a remark on the symmetry of the C4 decomposition graph $\Gamma(\Delta)$. 
Each node with label $n - \frac{1}{2}$ is uniquely determined, 
as each such node corresponds to a different C4 decomposition of $\Delta$. 
The nodes with label $n-1$, however, are not all uniquely determined. 
Indeed, let $\{\Delta_i: i \in I\}$ be a given node with label $n - \frac{1}{2}$, 
and let $I = I_1 \coprod \ldots \coprod I_m$ be the decomposition of $I$ as in \eqref{ij}. 
Then there is no way of distinguishing the nodes $\Delta_i$, for all $i \in I_j$, 
as they are all labeled by the same $\Delta_i$. 
Of course, nodes $\Delta_i$ and $\Delta_a$, for $i \in I_j$, $a \in I_b$, 
are different sets, and thus can be distinguished.

We may rephrase this observation in terms of a group action as follows: 
Given the C4 decomposition graph of $\Delta$, 
consider the subgraph with root $\{\Delta_i: i \in I\}$ and leaves $\Delta_i$, for all $i \in I$. 
Let $h := \#I$, and $h_j := \# I_j$, for $j = 1, \ldots, m$. 
We let the symmetric group $S_h$ act on the subgraph by permuting the leaves. 
Then the stabilizer of the graph is the subgroup
\begin{equation}\label{defg}
  G := S_{h_1} \times \ldots \times S_{h_m}
\end{equation}
of $S_h$, where each factor $S_{h_j}$ permutes the leaves $\Delta_i$, for $i \in I_j$. 

Similar assertions can be made about the symmetry of the C4 decomposition graph, 
as opposed to the above-described subgraph, and about the iterated C4 decomposition graph. 
We shall not state those assertions here, rather proving the following statement. 

\begin{lmm}\label{graphlemma}
  Let $\Delta\in\mathcal{D}_{n}$, and let $\Gamma(\Delta)$ be its iterated C4 decomposition graph. 
  Then the following natural numbers coincide:
  \begin{enumerate}
    \item[(i)] the number of admissible subgraphs of $\Gamma(\Delta)$, up to symmetry, 
    \item[(ii)] the number of possibilities to assemble $\Delta$ from $\#\Delta$ 
      points by iterated C4 addition of standard sets, and 
    \item[(iii)] the C4 decomposition number $d(\Delta)$. 
  \end{enumerate}
\end{lmm}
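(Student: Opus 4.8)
The plan is to prove the three‑way equality by showing (i) $=$ (iii) and (ii) $=$ (iii), both by induction on $n$, the base cases $n = 1, 2$ being trivial since there is a unique decomposition at every stage and $d(\Delta) = 1$ by definition. The combinatorial heart of the matter is the multinomial‑type count hidden in formula \eqref{functionald}: if a node $\{\Delta_i : i \in I\}$ with label $n-\frac{1}{2}$ has below it $m$ distinct standard sets, the $j$‑th occurring with multiplicity $h_j$, and if each of those sets admits $d(I_j)$ ``sub‑assemblies'' (by induction), then the number of ways to attach a sub‑assembly to each of the $h$ leaves, \emph{counted up to the action of the stabilizer $G = S_{h_1} \times \cdots \times S_{h_m}$ of \eqref{defg}}, is exactly $\prod_{j=1}^{m} \binom{d(I_j) + h_j - 1}{h_j}$. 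This is the standard ``number of multisets of size $h_j$ drawn from $d(I_j)$ types'' identity, applied one block at a time; I would isolate it as a sub‑lemma so that both halves of the main argument can invoke it.

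For (i) $=$ (iii): an admissible subgraph $\Gamma'$ is forced, at the root, to pick exactly one node $I$ of label $n - \frac{1}{2}$, and then must contain \emph{all} $n-1$‑labelled children of that node; below each such child $\Delta_i$ it restricts to an admissible subgraph of the iterated decomposition graph $\Gamma(\Delta_i)$, and two such subgraphs of $\Gamma(\Delta_i)$ count as the same ``up to symmetry'' precisely when they differ by a relabelling allowed by $G$. So the number of admissible subgraphs up to symmetry is $\sum_{I \in \mathcal{I}}$ (number of $G$‑orbits of tuples of admissible subgraphs, one in each $\Gamma(\Delta_i)$). By the induction hypothesis each $\Gamma(\Delta_i)$ with $\Delta_i$ of the $j$‑th type carries $d(I_j)$ admissible subgraphs up to symmetry; feeding this into the sub‑lemma gives exactly the right‑hand side of \eqref{functionald}. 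One must be a little careful that the symmetry being quotiented by in $\Gamma(\Delta)$ is generated by the $G$'s at \emph{all} internal fractional‑label nodes, not just the top one — this is where the recursive structure of the iterated decomposition graph (Definition \ref{graphs}(ii)) does the bookkeeping for us, since each subtree below a label‑$(n-1)$ node is literally a copy of $\Gamma(\Delta_i)$.

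For (ii) $=$ (iii): a ``way to assemble $\Delta$ from $\#\Delta$ points by iterated Connect Four addition'' should first be made precise as a rooted tree of standard sets, with root $\Delta$, each internal node equal to the Connect Four sum of its children (which lie one ambient dimension lower), leaves the one‑point sets in $\mathcal{D}_0$, and two assemblies identified when they differ only by reordering equal summands — i.e.\ by the same symmetry as above. The top branching of such an assembly is exactly a choice of decomposition $I$ of $\Delta$ (using fact (iii) of Section \ref{combinatorics} together with the definition of decomposition), and below each summand $\Delta_i$ one has, recursively, an assembly of $\Delta_i$; modding out by the reordering of equal summands again produces the $G$‑orbit count, so the sub‑lemma plus induction yields \eqref{functionald} once more. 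Hence (ii) is computed by the same recursion as (iii) with the same base case, so they agree; chaining with (i) $=$ (iii) finishes the proof.

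The main obstacle I anticipate is not any single computation but getting the notion of ``up to symmetry'' in (i) and ``identified assemblies'' in (ii) stated so that they genuinely match the group $G$ of \eqref{defg} \emph{at every level} of the iterated graph simultaneously, and so that distinct standard sets occurring in a decomposition are never accidentally identified (the paper's own remark before the lemma flags exactly this subtlety). Once the equivalence relation is pinned down, the inductive step is a mechanical application of the multiset‑counting sub‑lemma, so I would spend most of the write‑up making the symmetry bookkeeping airtight and keep the arithmetic terse.
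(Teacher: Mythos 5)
Your proposal is correct and follows essentially the same route as the paper: the core in both is the induction on $n$ in which an admissible subgraph (or assembly) selects exactly one decomposition node at the root, restricts to admissible subgraphs of each $\Gamma(\Delta_i)$, and the indistinguishability of the $\Delta_i$ within each block $I_j$ turns the count into the multiset formula $\prod_j\binom{d(I_j)+h_j-1}{h_j}$, reproducing the functional equation \eqref{functionald}. The only cosmetic difference is that the paper establishes (i)$=$(ii) by a direct bijection (after first checking each admissible subgraph has $\#\Delta$ leaves) and then runs the recursion once for (i)$=$(iii), whereas you run the same recursion twice, for (i)$=$(iii) and (ii)$=$(iii).
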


\begin{proof}
  First we prove by induction over $n$
  that each admissible subgraph $\Gamma^\prime$ of $\Gamma(\Delta)$ has precisely $\#\Delta$ leaves. 
  Let us take such a $\Gamma^\prime$. 
  The node $\Delta$ of $\Gamma^\prime$ is connected with exactly one node of label $n-\frac{1}{2}$, 
  i.e., with exactly one C4 decomposition $\{\Delta_i: i\in I\}$ of $\Delta$. 
  For all $i\in I$, let $\Gamma(\Delta_i)$ be the iterated C4 decomposition graph of $\Delta_i$. 
  Thus $\Gamma(\Delta_i)$ is simply the subgraph of $\Gamma(\Delta)$
  which consists of the root $\Delta_i$ and all nodes of labels 
  $n-1,n-\frac{3}{2},n-2,n-\frac{5}{2},\ldots,\frac{1}{2},0$ 
  which are connected with $\Delta_i$ by a sequence of edges. 
  It follows that the intersection $\Gamma^\prime\cap\Gamma(\Delta_i)$
  is an admissible subgraph of $\Gamma(\Delta_i)$. 
  By our induction hypothesis, $\Gamma^\prime \cap \Gamma(\Delta_i)$ has precisely $\#\Delta_i$ leaves. 
  The node $\{\Delta_i: i\in I\}$ of $\Gamma^\prime$ is connected with all nodes $\Delta_i$, $i\in I$, 
  of $\Gamma(\Delta)$. Therefore, the number of leaves of $\Gamma^\prime$ equals the sum
  over all $i\in I$ of the number of leaves of the graphs $\Gamma^\prime \cap \Gamma(\Delta_i)$.
  That sum is $\sum_{i \in I}\#\Delta_i=\#\Delta$, as was claimed.
  
  From what we have just proved, we see that each admissible subgraph $\Gamma^\prime$ of $\Gamma(\Delta)$
  encodes one possibility to assemble $\Delta$ from $\#\Delta$ points 
  (the leaves of $\Gamma^\prime$) by iterated C4 addition of standard sets.
  Moreover, it is clear that different admissible subgraphs, up to symmetry, 
  correspond to different possibilities to assemble $\Delta$ from $\#\Delta$ points. 
  Therefore, the numbers of (i) and (ii) coincide. 

  For the rest of the proof, we denote the number of admissible subgraphs of $\Gamma(\Delta)$ by $e(\Delta)$.
  We prove, again by induction over $n$, that $e(\Delta)=d(\Delta)$. 
  If $n=1$ or $n=2$, then clearly $e(\Delta)=d(\Delta)=1$. Therefore, take $n\geq3$. 
  Let us investigate what shape, up to symmetry, 
  an admissible subgraph $\Gamma^\prime$ of $\Gamma(\Delta)$ may take. 
  We start with the passage from the root, $\Delta$, 
  to nodes with label $n - \frac{1}{2}$. 
  We have seen above that $\Gamma^\prime$ contains 
  precisely one edge from $\Delta$ to one specific node $\{\Delta_i: i\in I\}$. 
  As for the passage from the nodes with label $n - \frac{1}{2}$ to nodes with label $n - 1$, 
  we have seen that in $\Gamma^\prime$, 
  the node $\{\Delta_i: i\in I\}$ is connected with each node $\Delta_i$, for $i\in I$, by an edge. 
  We have seen that for each $i \in I$, 
  the intersection $\Gamma^\prime \cap \Gamma(\Delta_i)$ is an admissible subgraph of $\Gamma(\Delta_i)$. 
  Let $I = I_1 \coprod \ldots \coprod I_m$ be the decomposition \eqref{ij} of the indexing set. 
  We have seen that as long as $i$ only runs through $I_j$, the nodes $\Delta_i$, 
  and therefore, also the subgraphs $\Gamma(\Delta_i)$, 
  and therefore, also the sets of admissible subgraphs of $\Gamma(\Delta_i)$, cannot be distinguished. 
  By assumption, there exist $e(\Delta_i)$ admissible subgraphs of $\Gamma(\Delta_i)$. 
  Define $e(I_j) := e(\Delta_i)$, for any $i \in I_j$. 
  As for the passage from nodes with label $n - 1$ to nodes with label $n - \frac{3}{2}$, 
  we have seen that the graph $\Gamma^\prime$ 
  contains precisely one edge from each node $\Delta_i$ to one C4 decomposition of $\Delta_i$. 
  Therefore, for all $i \in I_j$, 
  the intersection $\Gamma^\prime \cap \Gamma(\Delta_i)$ takes any value in a set of cardinality $e(\Gamma_i)$. 
  In other words, we are counting the number of multisets of cardinality $h_j = \# I_j$, 
  with elements taken from a set of cardinality $e(I_j)$. 
  As we are doing that independently for all $j = 1, \ldots, m(I)$, and also for all $I$ we started with, 
  we see that $e(\Delta)$ satisfies the functional equation 
  \begin{equation*}
    e(\Delta) = \sum_{I \in \mathcal{I}} \prod_{j = 1}^{m(I)} {e(I_j) + h_j - 1 \choose h_j} .
  \end{equation*}
  Here the sum is also taken over all C4 decompositions of $\Delta$, identified with their indexing sets, $I$. 
  We obtain the same functional equation as \eqref{functionald}. Hence the desired equality, $e(\Delta) = d(\Delta)$. 
\end{proof}

At this point, we wish to draw the reader's attention to the unpublished article \cite{gao}. 
The findings of that paper are closely related to our results of \cite{jpaa} --- in particular, 
the authors of \cite{gao} also exhibit what we call addition of standard sets. 
The graph in Figure 2 of that paper encodes the way in which the standard set $D(A) \subset \mathbb{N}^n$ 
of a given finite set $A \subset k^n$ is composed from $\#A$ points via iterated addition of standard sets. 
In contrast to that, our graphs in Figures \ref{longgraph} and \ref{widegraph}
encode all ways to decompose a given standard set $\Delta \subset \mathbb{N}^n$ 
into $\#\Delta$ points via iterated addition of standard sets. 

Take an arbitrary standard set $\Delta$ and its iterated C4 decomposition graph $\Gamma$. 
Let $\overline{\Gamma}$ be the graph which arises from $\Gamma$ by removing 
all nodes of labels $0,\frac{1}{2},1,1\frac{1}{2}$ and $2$. 
We call $\overline{\Gamma}$ the {\it truncated iterated C4 decomposition graph of $\Delta$}. 
We define admissible subgraphs of $\overline{\Gamma}$ by the same properties which characterize  
admissible subgraphs of $\Gamma$ in Definition \ref{numbers} above.
Then the number of admissible subgraphs of the truncated graph $\overline{\Gamma}$ 
equals the number of admissible subgraphs of $\Gamma$. 
Therefore, it suffices to consider the truncated graph for determining $d(\Delta)$. 
Figure \ref{widegraph} shows the truncated C4 decomposition graph of the standard set
$\Delta=\{0,e_{1},e_{2},e_{3},e_{4},2e_{4}\}\in\mathcal{D}_{4}$. 

\begin{center}
\begin{figure}
  \begin{picture}(350,365)
    \put(2,155){\small $\left\lbrace \begin{array}{c} 0, e_{1}, e_{2}, \\
    e_{3}, e_{4}, 2e_{4} \end{array} \right\rbrace$}
    \put(70,173){\line(1,4){26}}
    \put(70,163){\line(2,5){24}}
    \put(70,157){\line(1,0){24}}
    \put(70,151){\line(2,-5){24}}
    \put(70,141){\line(1,-4){26}}
    \put(99,283){\small $\left\lbrace \begin{array}{c} \{0,e_{1},e_{2},e_{3}\}, \\
    \{0\},\{0\} \end{array} \right\rbrace$}
    \put(103,219){\small $\left\lbrace \begin{array}{c} \{0,e_{1},e_{2}\}, \\
    \{0,e_{3}\},\{0\} \end{array} \right\rbrace$}
    \put(103,155){\small $\left\lbrace \begin{array}{c} \{0,e_{1},e_{3}\}, \\
    \{0,e_{2}\},\{0\} \end{array} \right\rbrace$}
    \put(103,90){\small $\left\lbrace \begin{array}{c} \{0,e_{2},e_{3}\}, \\
    \{0,e_{1}\},\{0\} \end{array} \right\rbrace$}
    \put(112,26){\small $\left\lbrace \begin{array}{c} \{0,e_{1}\}, \\
    \{0,e_{2}\}, \\
    \{0,e_{3}\} \end{array} \right\rbrace$}
    \put(184,295){\line(5,2){25}}
    \put(184,285){\line(1,0){24}}
    \put(184,275){\line(5,-2){25}}
    \put(184,231){\line(5,2){25}}
    \put(184,221){\line(1,0){24}}
    \put(184,211){\line(5,-2){25}}
    \put(184,167){\line(5,2){25}}
    \put(184,157){\line(1,0){24}}
    \put(184,147){\line(5,-2){25}}
    \put(184,103){\line(5,2){25}}
    \put(184,93){\line(1,0){24}}
    \put(184,83){\line(5,-2){25}}
    \put(184,39){\line(5,2){25}}
    \put(184,29){\line(1,0){24}}
    \put(184,19){\line(5,-2){25}}
    \put(211,303){\small $\{0,e_{1},e_{2},e_{3}\}$}
    \put(211,282.5){\small $\{0\}$}
    \put(211,262){\small $\{0\}$}
    \put(211,239){\small $\{0,e_{1},e_{2}\}$}
    \put(211,218.5){\small $\{0,e_{3}\}$}
    \put(211,198){\small $\{0\}$}
    \put(211,174){\small $\{0,e_{1},e_{3}\}$}
    \put(211,154.5){\small $\{0,e_{2}\}$}
    \put(211,134){\small $\{0\}$}
    \put(211,110){\small $\{0,e_{2},e_{3}\}$}
    \put(211,90.5){\small $\{0,e_{1}\}$}
    \put(211,70){\small $\{0\}$}
    \put(211,47){\small $\{0,e_{1}\}$}
    \put(211,26.5){\small $\{0,e_{2}\}$}
    \put(211,6){\small $\{0,e_{3}\}$}
    \put(268,310){\line(3,2){25}}
    \put(268,305){\line(1,0){24}}
    \put(229,285){\line(1,0){64}}
    \put(229,264.5){\line(1,0){64}}
    \put(256,241){\line(1,0){38}}
    \put(242,221){\line(1,0){52}}
    \put(229,200.5){\line(1,0){64}}
    \put(256,176){\line(1,0){38}}
    \put(242,157){\line(1,0){52}}
    \put(229,136.5){\line(1,0){64}}
    \put(256,112){\line(1,0){38}}
    \put(242,93){\line(1,0){52}}
    \put(229,72.5){\line(1,0){64}}
    \put(242,49.5){\line(1,0){52}}
    \put(242,29){\line(1,0){52}}
    \put(242,8.5){\line(1,0){52}}
    \put(295,325){\small $\{\{0,e_{1},e_{2}\},\{0\}\}$}
    \put(295,302.5){\small $\{\{0,e_{1}\},\{0,e_{2}\}\}$}
    \put(295,282.5){\small $\{\{0\}\}$}
    \put(295,262){\small $\{\{0\}\}$}
    \put(295,239){\small $\{\{0,e_{1},e_{2}\}\}$}
    \put(295,218.5){\small $\{\{0\},\{0\}\}$}
    \put(295,198){\small $\{\{0\}\}$}
    \put(295,174){\small $\{\{0,e_{1}\},\{0\}\}$}
    \put(295,154.5){\small $\{\{0,e_{2}\}\}$}
    \put(295,134){\small $\{\{0\}\}$}
    \put(295,110){\small $\{\{0,e_{2}\},\{0\}\}$}
    \put(295,90.5){\small $\{\{0,e_{1}\}\}$}
    \put(295,70){\small $\{\{0\}\}$}
    \put(295,47){\small $\{\{0,e_{1}\}\}$}
    \put(295,26.5){\small $\{\{0,e_{2}\}\}$}
    \put(295,6){\small $\{\{0\},\{0\}\}$}
    \put(40,350){\small $4$}
    \put(125,350){\small $3\frac{1}{2}$}
    \put(225,350){\small $3$}
    \put(320,350){\small $2\frac{1}{2}$}
  \end{picture}
\caption{A truncated iterated C4 decomposition graph}
\label{widegraph}
\end{figure}
\end{center}


\subsection{A generating function}\label{powerseries}

Let us briefly sketch how to express as a coefficient of a certain power series 
the number ways of a decomposing a standard set $\Delta \subset \mathbb{N}^n$ 
into a C4 sum of standard sets $\Delta_i \subset \mathbb{N}^{n-1}$. 
After that, we will make another remark on the C4 decomposition number of $\Delta$. 

For each standard set $\epsilon \subset q^n(\Delta) \subset \mathbb{N}^{n-1}$, 
we consider the vector 
\begin{equation*}
  A(\epsilon) := (A(\epsilon)_{\beta})_{\beta\in q^n(\Delta)} ,
\end{equation*}
indexed by the elements of $q^n(\Delta)$, where 
\begin{equation*}
  A(\epsilon)_{\beta} :=
  \begin{cases}
    1 & \text{ if }\beta\in\epsilon ,\\
    0 & \text{ if }\beta\notin\epsilon .
  \end{cases} 
\end{equation*}
We write $A$ for the matrix with columns $A(\epsilon)$, 
where $\epsilon$ runs through all standard sets contained in $q^n(\Delta)$. 
Moreover, let 
\begin{equation*}
  t := (t_{\beta})_{\beta \in q^n(\Delta)}
\end{equation*}
be a vector of indeterminates, also indexed by the elements of $q^n(\Delta)$. 
If $v\in\mathbb{N}^{q^n(\Delta)}$ is any vector of nonnegative integers, 
we write $t^v := \prod_{\beta \in q^n(\Delta)}t_{\beta}^{v_{\beta}}$. 

Consider the power series
\begin{equation*}
  g := \prod_{\epsilon \subset q^n(\Delta)}\frac{1}{1 - t^{A(\epsilon)}}
  = \sum_{v \in \mathbb{N}^{q^n(\Delta)}}\Phi_{A}(v)t^v .
\end{equation*}
This is the generating function of the {\it vector partition function} $\Phi_{A}:\mathbb{N}^{q^n(\Delta)}\to\mathbb{N}$
associated to the matrix $A$ (see \cite{vector} for references on vector partition functions). 
Upon considering the particular vector
\begin{equation*}
  v_{\Delta} := (\#((q_{n})^{-1}(\beta) \cap \Delta))_{\beta\in q^n(\Delta)} ,
\end{equation*}
which encodes shape of the standard set $\Delta$, 
we see that the coefficient $\Phi_{A}(v_{\Delta})$ equals 
the number of C4 decompositions of the standard set $\Delta \subset \mathbb{N}^n$
into a C4 sum of standard sets $\epsilon \subset \mathbb{N}^{n-1}$. 
Moreover, we see that for a given $v\in\mathbb{N}^{q^n(\Delta)}$, 
the coefficient $\Phi_{A}(v)$ vanishes unless the set 
\begin{equation*}
  \cup_{\beta\in q^n(\Delta)}\{(\beta,u): 0\leq u<v\} \subset \mathbb{N}^n
\end{equation*}
encoded by $v$ is a standard set. 

The power series $g$, however, only encodes the C4 decomposition number of $\Delta$, 
and not its iterated C4 decomposition number. 
The definition of a power series encoding the iterated C4 decomposition number appears much harder to find;
we shall not pursue a search for it in the present paper. 


\subsection{The original example}\label{original}

As was mentioned in the Introduction, $\Hi^{\prec\Delta}_{S/k}$ is an affine scheme. 
We denote its coordinate ring by $R^\Delta$. 
This ring is not hard to implement in a computer algebra systems such as {\it Macaulay2} (see \cite{M2}). 
For doing so, one can use the presentation of $R^\Delta$ given in \cite{strata} (cf. the discussion in the next section). 
Upon carrying this out for any field $k$ and the fixed standard set 
\begin{equation*}
  \Delta=\{0,e_{1},e_{2},e_{3}\}\in\mathcal{D}_{3} ,
\end{equation*}
one finds that the number of irreducible components of $\Hi^{\prec\Delta}_{S/k}$ equals $2$. 
Upon taking another look at Figure \ref{longgraph}, 
we see that the C4 decomposition number of that particular standard set $\Delta$ also equals $2$. 

This example was the starting point for the research presented here. 
Bernd Sturmfels observed that for this particular $\Delta$,
the number of irreducible components of $\Hi^{\prec\Delta}_{S/k}$ equals $d(\Delta)$. 
He asked whether or not this equality holds for general $\Delta$. 
We will prove that such an equality holds for a certain subscheme $\Hi^{\prec\Delta,\et}_{S/k}$ of $\Hi^{\prec\Delta}_{S/k}$, 
which we will define in Section \ref{reducedmoduli}. 
Not only will we be counting the irreducible components of that subscheme, 
we will also be counting its connected components, 
we will show that it is equidimensional, and we will compute its dimension.
The respective assertions are stated in Theorem \ref{mainthm}. 
At the very end of our paper, in Section \ref{negative}, we will return to the full scheme $\Hi^{\prec\Delta}_{S/k}$. 
We will prove that given any $\Delta$, 
the scheme $\Hi^{\prec\Delta}_{S/k}$ has more than $d(\Delta)$ irreducible components. 
In this sense, Sturmfels' question is answered in the negative for $\Hi^{\prec\Delta}_{S/k}$. 
Moreover, we will also answer Sturmfels' question in the negative for a certain scheme $\mathscr{G}^{\prec \Delta}_{S/k}$
lying in between $\Hi^{\prec\Delta,\et}_{S/k}$ and $\Hi^{\prec\Delta}_{S/k}$. 


\section{The affine schemes under study}\label{affines}

We start this section with a review of the functor $\HHi^{\prec\Delta}_{S/k}$
we introduced in Section \ref{intro}. 
After that, we shall introduce the auxiliary schemes we are going to use. 
All schemes appearing in this section are affine; we shall describe their coordinate rings. 
For defining our auxiliary schemes, we will use the polynomial ring $\overline{S} := k[\overline{x}] := k[x_{1},\ldots,x_{n-1}]$. 
All schemes we introduce in this section are well defined if $\prec$ is an arbitrary term order on $S$ 
(thus inducing a term order on $\overline{S}$). 
However, the definitions from Sections \ref{hilbtimesa}--\ref{alldecompositions}
are crucial to our proof of Sturmfels' conjecture. 
Therefore, the schemes we define there are useful only when $\prec$ is the lexicographic order,
which is the order of choice in the present paper. 


\subsection{Gr\"obner strata}\label{groebnerscheme}

Take $\Delta\in\mathcal{D}_{n}$. 
The scheme $\Hi^{\prec\Delta}_{S/k}$, which we introduced in Section \ref{intro},
has the coordinate ring
\begin{equation}\label{generators}
  R^{\prec\Delta} := k[T_{\alpha,\beta}: \alpha\in N\cup N^{(1)}, \beta\in\Delta]/I^{\prec\Delta} ,
\end{equation}
where $N$ is a finite or infinite standard set in $\mathbb{N}^n$ containing $\Delta$,
$N^{(1)}=(\cup_{i=1}^n(N+e_i)) \setminus N$ is the {\it border} of $N$ and 
$I^{\prec\Delta}$ is a certain ideal in the indicated polynomial ring with variables $T_{\alpha,\beta}$
(see \cite{strata} for the definition of $I^{\prec\Delta}$). 

As was mentioned in Section \ref{intro}, the scheme $\Hi^{\prec\Delta}_{S/k}$
represents the {\it Gr\"obner functor} $\HHi^{\prec\Delta}_{S/k}:(k{\rm-Alg})\to({\rm Sets})$,
which sends a $k$-algebra $B$ to the set of all reduced Gr\"obner bases with standard set $\Delta$. 
The universal object over $\Hi^{\prec\Delta}_{S/k}$ is the affine scheme 
\begin{equation*}
  U^{\prec\Delta} := {\rm Spec}\,S^{\prec\Delta}
\end{equation*}
whose coordinate ring is
\begin{equation*}
  S^{\prec\Delta} := S \otimes_k R^{\prec\Delta} / J^{\prec\Delta} ,
\end{equation*}
where we factor out the ideal 
\begin{equation}\label{relationsj}
  J^{\prec\Delta} := \langle x^\alpha-\sum_{\beta\in\Delta}T_{\alpha,\beta}x^\beta: 
  \alpha\in\mathscr{C}(\Delta) \rangle .
\end{equation}
Here $\mathscr{C}(\Delta)$ is the set of corners of $\Delta$ that we introduced in Section \ref{intro}. 
Note that the ideal $J^{\prec\Delta}$ in $S \otimes_k R^{\prec\Delta}$ is monic with standard set $\Delta$. 
The universal morphism $U^{\prec\Delta}\to\Hi^{\prec\Delta}_{S/k}$ is the morphism 
corresponding to the canonical homomorphism of rings $R^{\prec\Delta}\to S^{\prec\Delta}$.

We shall frequently use equivalent formulations of the functor $\HHi^{\prec\Delta}_{S/k}$. 
More precisely, we shall identify the following objects:
\begin{itemize}
  \item a surjective $B$-algebra homomorphism $\phi:S \otimes_k B \to Q$ such that $\ker\,\phi$
  is a monic ideal with standard set $\Delta$;
  \item a monic ideal $J \subset S \otimes_k B$ with standard set $\Delta$;
  \item a reduced Gr\"obner basis of shape 
  $\lbrace x^\alpha+\sum_{\beta\in\Delta,\beta\prec\alpha}d_{\alpha,\beta}x^\beta: \alpha\in\mathscr{C}(\Delta) \rbrace$ 
  in $S \otimes_k B$;
  \item a Gr\"obner basis of shape 
  $\lbrace x^\alpha+\sum_{\beta\in\Delta,\beta\prec\alpha}d_{\alpha,\beta}x^\beta:  
  \alpha\in N\cup N^{(1)} \setminus \Delta \rbrace$ 
  in $S \otimes_k B$, where $N$ is as above;
  \item a homomorphism $g:R^{\prec\Delta}\to B$; 
  \item the extended homomorphism ${\rm id} \otimes g: S \otimes_k R^{\prec\Delta} \to S \otimes_k B$; and
  \item the ideal $\langle ({\rm id} \otimes g)(J^{\prec\Delta}) \rangle \subset S \otimes_k B$. 
\end{itemize}
The transitions between the bulleted items are established by the identities 
\begin{equation*}
  \ker\phi = J = \langle x^\alpha+\sum_{\beta\in\Delta}d_{\alpha,\beta}x^\beta: \beta\in\mathscr{C}(\Delta) \rangle
  = \langle ({\rm id} \otimes g)(J^{\prec\Delta}) \rangle .
\end{equation*}
From this we get a number of reformulations of the definition of $\HHi^{\prec\Delta}_{S/k}$:
This functor sends a $k$-algebra $B$ to the set of all surjective $B$-algebra homomorphisms 
$\phi: S \otimes_k B \to Q$ such that $\ker\,\phi$ is a monic ideal with standard set $\Delta$;
or else, it sends a $k$-algebra $B$ to the set of all 
monic ideals $J \subset S \otimes_k B$ with standard set $\Delta$;
or else, the analogous assertion for each other item.

The identifications also explain the fact that $U^{\prec\Delta}$ is the 
universal object over $\Hi^{\prec\Delta}_{S/k}$.
On the one hand, an element of $\HHi^{\prec\Delta}_{S/k}(B)$ is a quotient $Q := S \otimes_k B/J$, 
which leads to the morphism $p: X := {\rm Spec}\,Q \to {\rm Spec}\,B$
restricting the projection 
$p:\mathbb{A}^n_{B}=\mathbb{A}^n_k\times_{{\rm Spec}\,k}{\rm Spec}\,B\to{\rm Spec}\,B$.
On the other hand, an element of $\HHi^{\prec\Delta}_{S/k}(B)$
is a morphism $\psi\in{\rm Hom}({\rm Spec}\,B,\Hi^{\prec\Delta}_{S/k})$. 
Then $p$ and $\psi$ correspond to each other via the cartesian diagram
\begin{equation}\label{cdschemes}
  \xymatrix{ 
    X \ar[d]_p \ar[r] & U^{\prec\Delta} \ar[d] \\ 
    {\rm Spec}\,B \ar[r]^\psi & \Hi^{\prec\Delta}_{S/k} .
  }
\end{equation}
Equivalently, we can rephrase the universal property of $\Hi^{\prec\Delta}_{S/k}$
purely in terms of its coordinate ring $R^{\prec\Delta}$ as follows. 

\begin{lmm}\label{universalring}
  Let $B$ be a $k$-algebra and $\phi: S \otimes_k B \to Q$ a $B$-algebra homomorphism. 
  Then $\phi$ lies in $\HHi^{\prec\Delta}_{S/k}(B)$ if, and only if, there exists a $k$-algebra homomorphism
  $f:R^{\prec\Delta}\to B$ such that the diagram
  \begin{equation}\label{cdrings}
    \xymatrix{ 
      R^{\prec\Delta} \ar[d]_f \ar[r] & S^{\prec\Delta} \ar[d] \\ 
      B \ar[r]^\phi & Q 
    }
  \end{equation}
  is co-cartesian. $f$ is unique if it exists. 
\end{lmm}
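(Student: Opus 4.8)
The plan is to unwind the definitions and recognize that the statement is essentially a restatement of the universal property of the representing scheme $\Hi^{\prec\Delta}_{S/k}$, translated from morphisms of schemes into homomorphisms of rings. First I would observe that giving a $k$-algebra homomorphism $f:R^{\prec\Delta}\to B$ is the same as giving a morphism $\psi=\operatorname{Spec} f:\operatorname{Spec} B\to\Hi^{\prec\Delta}_{S/k}$, and that giving a surjective $B$-algebra homomorphism $\phi:S\otimes_k B\to Q$ whose kernel is monic with standard set $\Delta$ is, by the identifications listed just before the lemma, the same as giving an element of $\HHi^{\prec\Delta}_{S/k}(B)$. Since $\Hi^{\prec\Delta}_{S/k}$ represents $\HHi^{\prec\Delta}_{S/k}$, these two data correspond bijectively and functorially; the content of the lemma is that this correspondence is realized concretely by the co-cartesian square \eqref{cdrings}.

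Next I would prove the two implications. For the ``if'' direction: assume a square \eqref{cdrings} that is co-cartesian, i.e. $Q\cong S^{\prec\Delta}\otimes_{R^{\prec\Delta}}B$ with $\phi$ the base-changed map. Because $J^{\prec\Delta}\subset S\otimes_k R^{\prec\Delta}$ is monic with standard set $\Delta$ (as recorded after \eqref{relationsj}), and monic ideals with a fixed standard set are stable under base change along $R^{\prec\Delta}\to B$ — the quotient $S^{\prec\Delta}$ is free over $R^{\prec\Delta}$ with basis the monomials $x^\beta$, $\beta\in\Delta$, so $Q$ is free over $B$ on the same basis and $\ker\phi=\langle(\mathrm{id}\otimes f)(J^{\prec\Delta})\rangle$ is monic with standard set $\Delta$ — we conclude $\phi\in\HHi^{\prec\Delta}_{S/k}(B)$. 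This is exactly the last-but-one and last bullet item in the list preceding the lemma. For the ``only if'' direction: assume $\phi\in\HHi^{\prec\Delta}_{S/k}(B)$, so $\ker\phi=J$ is monic with standard set $\Delta$. Passing through the bulleted identifications, $J$ is determined by its reduced Gr\"obner basis $f_\alpha=x^\alpha+\sum_{\beta\prec\alpha}d_{\alpha,\beta}x^\beta$, $\alpha\in\mathscr{C}(\Delta)$, with coefficients $d_{\alpha,\beta}\in B$. Define $f:R^{\prec\Delta}\to B$ on generators by $T_{\alpha,\beta}\mapsto d_{\alpha,\beta}$ (setting $T_{\alpha,\beta}\mapsto 0$ when $\beta\not\prec\alpha$, and extending to the larger index set $N\cup N^{(1)}$ by the corresponding normal-form coefficients); one checks $f$ kills $I^{\prec\Delta}$ precisely because the relations of $I^{\prec\Delta}$ are the polynomial conditions that the $f_\alpha$ form a Gr\"obner basis, and these hold by hypothesis. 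With this $f$, applying $\mathrm{id}\otimes f$ to the universal relations \eqref{relationsj} sends $J^{\prec\Delta}$ to $J$, so $S^{\prec\Delta}\otimes_{R^{\prec\Delta}}B\cong S\otimes_k B/J=Q$ and \eqref{cdrings} is co-cartesian.

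Finally, for uniqueness of $f$: the values $d_{\alpha,\beta}=f(T_{\alpha,\beta})$ are uniquely recovered from $\phi$, since the reduced Gr\"obner basis of a monic ideal with given standard set is unique (as emphasized in the introduction), and $f$ is determined by its values on the generators $T_{\alpha,\beta}$ of $R^{\prec\Delta}$. Equivalently, uniqueness is the standard fact that a morphism into a scheme representing a functor is uniquely determined by the element of the functor it produces. I expect the main obstacle to be purely bookkeeping: carefully matching up the indexing of the variables $T_{\alpha,\beta}$ over $N\cup N^{(1)}$ with the coefficients arising from the reduced Gr\"obner basis over $\mathscr{C}(\Delta)$, and citing from \cite{strata} the precise statement that the relations $I^{\prec\Delta}$ encode ``being a Gr\"obner basis,'' so that a homomorphism out of $k[T_{\alpha,\beta}]$ factors through $R^{\prec\Delta}$ exactly when the associated polynomials form such a basis. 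No genuinely hard mathematics is involved beyond invoking representability, which is already established in \cite{strata}.
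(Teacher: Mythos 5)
Your argument is correct and is in substance the paper's own: the lemma is just the universal property of $\Hi^{\prec\Delta}_{S/k}$ (representability from \cite{strata}) read through the affine dictionary, and the paper proves it in one line by noting that all schemes in the cartesian diagram \eqref{cdschemes} are affine, so that diagram dualizes to the co-cartesian square \eqref{cdrings}. Your two explicit implications (base-changing $J^{\prec\Delta}$ for ``if'', reading off $f$ from the reduced Gr\"obner coefficients and the relations $I^{\prec\Delta}$ for ``only if'', plus uniqueness of reduced Gr\"obner bases) are simply a spelled-out version of that dualization, resting on the same inputs.
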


\begin{proof}
  As all schemes in the cartesian diagram \eqref{cdschemes} are affine, 
  that diagram translates to a co-cartesian diagram of the corresponding coordinate rings, 
  which is precisely the diagram \eqref{cdrings}. 
\end{proof}

We call the scheme $\Hi^{\prec\Delta}_{S/k}$ a {\it Gr\"obner stratum}. 
The terminology is motivated by the fact that this scheme is a locally closed subscheme of the Hilbert scheme of $r$ points, 
$\Hi^{r}_{S/k}$, where $r=\#\Delta$. 
Gr\"obner strata $\Hi^{\prec\Delta}_{S/k}$, and related objects, have been studied by a number of authors, 
see \cite{robbianobbgb} and references therein. 
The authors of \cite{altmannbernd}, \cite{evain1}, or \cite{evain2}
refer to Gr\"obner strata as {\it Schubert schemes}, or {\it Schubert cells}. 
Their terminology is motivated by the analogy of the inclusion 
$\Hi^{\prec\Delta}_{S/k} \subset \Hi^{r}_{S/k}$ to the inclusion of a Schubert cell in the Grassmannian 
in the case where $\Delta$ is a subset of the standard basis $\{e_{1},\ldots,e_{n}\} \subset \mathbb{N}^n$, 
augmented by $0\in\mathbb{N}^n$. 


\subsection{The same in a lower dimension}\label{lowerdim}

We can replace the given $\Delta\in\mathcal{D}_{n}$ by some $\Delta_i\in\mathcal{D}_{n-1}$, 
and accordingly, replace $S$ by $\overline{S}$. 
We get an affine scheme $\Hi^{\prec\Delta_i}_{\overline{S}/k}$, 
whose coordinate ring we denote by $R^{\prec\Delta_i}$. 
The scheme $\Hi^{\prec\Delta_i}_{\overline{S}/k}$ represents a functor 
$\HHi^{\prec\Delta_i}_{\overline{S}/k}$, 
which has descriptions analogous to the descriptions of $\HHi^{\prec\Delta}_{S/k}$ we gave above. 
The universal object over $\Hi^{\prec\Delta_i}_{\overline{S}/k}$ is the scheme
\begin{equation*}
  U^{\prec\Delta_i} := {\rm Spec}\,S^{\prec\Delta_i} ,
\end{equation*}
where 
\begin{equation*}
  S^{\prec\Delta_i} := \overline{S} \otimes_k R^{\prec\Delta_i}/J^{\prec\Delta_i} ,
\end{equation*}
and the generators of $J^{\prec\Delta_i}$ are entirely analogous to those in \eqref{relationsj} above. In particular, 
the ideal $J^{\prec\Delta_i}$ in $\overline{S} \otimes_k R^{\prec\Delta_i}$ is monic with standard set $\Delta_i$. 


\subsection{An additional parameter}\label{hilbtimesa}

Consider the scheme 
\begin{equation*}
  \Hi^{\prec\Delta_i}_{\overline{S}/k}\times\mathbb{A}^1_k ,
\end{equation*}
where the product is taken over ${\rm Spec}\,k$. 
Upon using the coordinate $y_i$ on the second factor, we see that the coordinate ring of that scheme is
\begin{equation*}
  R^{\prime\prec\Delta_i} := R^{\prec\Delta_i}[y_i] .
\end{equation*}
That scheme represents the functor sending a $k$-algebra $B$ to the set of all pairs 
$(J_i,b_i)$, where $J_i \subset \overline{S} \otimes_k B$ is a monic ideal with standard set $\Delta_i$ 
and $b_i$ is an element of $B$. Equivalently, the represented functor is
\begin{equation}\label{1st}
  \begin{split}
    h_{\Hi^{\prec\Delta_i}_{\overline{S}/k}\times\mathbb{A}^1_k}:(k{\rm-Alg})&\to({\rm Sets})\\
    B&\mapsto 
    \left\lbrace
    \begin{array}{c}
      \text{pairs of ideals } (J_i, \langle x_{n}-b_i \rangle), \\
      \text{s.t. } J_i \subset \overline{S} \otimes_k B \text{ is a monic ideal} \\
      \text{ with standard set }\Delta_i, \\
      \text{ and } \langle x_{n}-b_i \rangle \subset S \otimes_k B
    \end{array}
    \right\rbrace .
  \end{split}
\end{equation}
As the ideal $J_i$ appearing in the above pair lives in the ring $\overline{S} \otimes_k B$, 
and the ideal $\langle x_{n}-b_i \rangle \subset S \otimes_k B$ is generated by a polynomial involving only $x_n$, 
we may identify the above functor with 
\begin{equation}\label{2nd}
  \begin{split}
    h_{\Hi^{\prec\Delta_i}_{\overline{S}/k}\times\mathbb{A}^1_k}:(k{\rm-Alg})&\to({\rm Sets})\\
    B&\mapsto 
    \left\lbrace
    \begin{array}{c}
      \text{ideals } \langle J_i, x_{n}-b_i \rangle \subset S \otimes_k B, \\
      \text{s.t. } J_i \subset \overline{S} \otimes_k B \text{ is a monic ideal} \\
      \text{ with standard set }\Delta_i \\
      \text{ and } b_i \in B
    \end{array}
    \right\rbrace .
  \end{split}
\end{equation}
Viewed in this way, the universal object over $\Hi^{\prec\Delta_i}_{\overline{S}/k}\times\mathbb{A}^1_k$ 
is the affine scheme 
\begin{equation*}
  U^{\prec\Delta_i}\times\mathbb{A}^1_k = {\rm Spec}\,S^{\prime\prec\Delta_i} ,
\end{equation*}
where 
\begin{equation*}
  S^{\prime\prec\Delta_i} := S \otimes_k R^{\prime\prec\Delta_i}/J^{\prime\prec\Delta_i}
\end{equation*}
and 
\begin{equation*}
  J^{\prime\prec\Delta_i} := \langle J^{\prec\Delta_i} \rangle + \langle x_{n}-y_i \rangle 
  \subset S \otimes_k R^{\prime\prec\Delta_i} .
\end{equation*}
We will return to the transition between the two descriptions of the functor later, in Section \ref{proofimmersion}. 


\subsection{A product of the former, minus bad points}\label{minusbadpoints}

Let $\{\Delta_i: i\in I\}$ be a C4 decomposition of $\Delta$, which we identify with its indexing set $I$. We define 
\begin{equation}\label{full}
  \widehat{Y}^I := \Bigl( \prod_{i \in I} \bigl( \Hi^{\prec\Delta_i}_{\overline{S}/k}\times\mathbb{A}^1_k \bigr) \Bigr) 
  \setminus \Lambda .
\end{equation}
As above, the fibered product is taken over ${\rm Spec}\,k$, 
and we denote the coordinate of the second factor of 
$\Hi^{\prec\Delta_i}_{\overline{S}/k}\times\mathbb{A}^1_k$ by $y_i$. 
The closed subscheme $\Lambda$ which we remove is the {\it large diagonal},
\begin{equation}\label{large}
  \Lambda := \cup_{i\neq j\in I}\mathbb{V}(y_i-y_{j}) .
\end{equation}
Therefore, the coordinate ring of $\widehat{Y}^I$ is
\begin{equation}\label{rprimeprime}
  R^{\prime\prec I} := (\otimes_{i \in I}R^{\prime\prec\Delta_i})[\prod_{i\neq j\in I}\frac{1}{y_i-y_{j}}] .
\end{equation}
The functor represented by $\widehat{Y}^I$ is 
\begin{equation*}
  \begin{split}
    h_{\widehat{Y}^I}:(k{\rm-Alg})&\to({\rm Sets})\\
    B&\mapsto
    \left\lbrace 
    \begin{array}{c}
      \text{tuples } 
      \bigl( \langle J_i \rangle + \langle x_{n}-b_i \rangle \bigr)_{i \in I} \\
      \text{ of ideals in } S \otimes_k B \\
      \text{s.t. } J_i \subset \overline{S} \otimes_k B \text{ is monic with standard set }\Delta_i \\
      \text{ and } b_i-b_{j}\in B^*\text{ for }i\neq j
    \end{array}
    \right\rbrace .
  \end{split}
\end{equation*}
Note that all differences $y_i-y_{j}$ are invertible in the ring $R^{\prime\prec I}$, 
which corresponds to cutting out $\Lambda$ from the product in \eqref{full}.
Invertibility of $y_i-y_{j}$ will be crucial in Section \ref{connectfour} below. 


\subsection{Symmetry}\label{mod}

Remember that a C4 decomposition $\{\Delta_i: i \in I\}$ is a multiset, 
meaning that the same $\Delta_i$ may occur multiple times. 
We write the indexing set of the C4 decomposition as a coproduct $I = I_1 \coprod \ldots \coprod I_m$ as in \eqref{ij}. 
As above, we write $h_j = \# I_j$, and consider the group
$G = S_{h_1} \times \ldots \times S_{h_m}$ of \eqref{defg}. 
We let $G$ act on the product $\prod_{i \in I} \Hi^{\prec \Delta_i}_{\overline{S}/k} \times \mathbb{A}^1_k$ 
by letting each $S_{h_j}$ permute the $h_j$ factors $\Hi^{\prec \Delta_i}_{\overline{S}/k} \times \mathbb{A}^1_k$ 
indexed by the multiset $\{\Delta_i: i \in I_j\}$. 
This action clearly induces an action of $G$ on $\widehat{Y}^I$. We consider the {\it geometric quotient}
\begin{equation}\label{quotient}
  Y^I := \widehat{Y}^I / G ,
\end{equation}
As $\widehat{Y}^I$ is quasi-projective and $G$ is finite, the quotient $Y^I$ exists as a scheme.
In fact, $Y^I$ is an affine scheme,
whose coordinate ring arises from the coordinate ring of $\widehat{Y}^I$ by taking invariants. 
(This follows from \cite{mumford}, \S12, Theorem 1; or \cite{bertin}, Section 1.3, Lemma 1.3 and Proposition 1.4; 
or \cite{dolgachevit}, Example 6.1.
In what follows, we will encounter the same situation several times: 
A finite group will be acting on a quasi-projective scheme. 
In those situations, the existence of the quotient will always follow from the cited results.)

Here is the explicit description of the action, and of the coordinate ring of $Y^I$. 
Denote by $T_{\alpha,\beta}^{(i)}$ the generators of $R^{\prime\prec \Delta_i}$. 
We let $G$ act on $R^{\prime\prec I}$ setting, for each factor $S_{h_j}$ of $G$ and for each $\sigma \in S_{h_j}$, 
\begin{equation}\label{action}
  \begin{split}
    \sigma(T_{\alpha,\beta}^{(i)}) & := T_{\alpha,\beta}^{(\sigma(i))} , \\
    \sigma(y_i) & := y_{\sigma(i)} .
  \end{split}
\end{equation}
Upon denoting the invariant ring of that action by $(R^{\prime\prec I})^G$, we obtain that 
\begin{equation*}
  Y^I = {\rm Spec}\, (R^{\prime\prec I})^G .
\end{equation*}
The functor represented by $Y^I$ is 
\begin{equation*}
  \begin{split}
    h_{Y^I}:(k{\rm-Alg})&\to({\rm Sets})\\
    B&\mapsto
    \left\lbrace 
    \begin{array}{c}
      \text{sets }
      \left\lbrace \langle J_i \rangle + \langle x_{n}-b_i \rangle: i \in I \right\rbrace \\
      \text{ of ideals in } S \otimes_k B \\
      \text{s.t. } J_i \subset \overline{S} \otimes_k B \text{ is monic with standard set }\Delta_i \\
      \text{ and } b_i-b_{j}\in B^*\text{ for }i\neq j
    \end{array}
    \right\rbrace .
  \end{split}
\end{equation*}


\subsection{The disjoint sum over all C4 decompositions}\label{alldecompositions}

Finally, for a fixed $\Delta\in\mathcal{D}_{n}$, let $\mathcal{I}$ be the set of all C4 decompositions 
$\{\Delta_i: i\in I\}$ of $\Delta$ by elements $\Delta_i$ of $\mathcal{D}_{n-1}$. 
Again we use the notation $I\in\mathcal{I}$ when we mean the whole C4 decomposition $\{\Delta_i: i\in I\}$. 
Accordingly, we denote the disjoint sum of all $\widehat{Y}^I$ by 
\begin{equation*}
  \widehat{Y}^\Delta := \coprod_{I \in \mathcal{I}}\widehat{Y}^I .
\end{equation*}
This is an affine scheme whose coordinate ring is
\begin{equation*}
  R^{\prime\prec\Delta} := \prod_{I\in\mathcal{I}}R^{\prime\prec I} .
\end{equation*}
The scheme $\widehat{Y}^\Delta$ is the most important auxiliary object of this paper. 

As we explicitly know all functors $h_{\widehat{Y}^I}$,
we also have the functor 
\begin{equation*}
  \begin{split}
    h_{\widehat{Y}^\Delta}:(k{\rm-Alg})&\to({\rm Sets})
  \end{split}
\end{equation*}
at hand. Indeed, in the category of schemes, the coproduct is given by the disjoint sum. 
It therefore suffices to know the functors $h_{\widehat{Y}^{I}}$ for knowing the whole functor $h_{\widehat{Y}^\Delta}$. 
Explicitly, if $B$ is a $k$-algebra $B$ having no nontrivial idempotents (e.g. a domain, or even a field), 
then $h_{\widehat{Y}^\Delta}$ sends $B$ to the set of all tuples of ideals 
$\bigl( \langle J_i \rangle + \langle x_{n}-b_i \rangle \bigr)_{i \in I}$, indexed by some $I\in\mathcal{I}$, 
such that $J_i \subset \overline{S} \otimes_k B$ is monic with standard set $\Delta_i$ and $b_i-b_{j}\in B^*$ for $i\neq j$.
(We will need that description in the forthcoming sections. 
For verifying its correctness, we consider $e\in B$, the only nonzero element such that $e^2=e$, 
and the identity elements $e_{I}\in R^{\prime\prec I}$. 
A homomorphism $g:\prod_{I}R^{\prime\prec I}\to B$ sends $\sum_{I}e_{I}$ to $e$. 
From the equation $g(e_{I})^2=g(e_{I}^2)=g(e_{I})$, we see that $g(e_{I})$ is either $0$ or $e$. 
For $I\neq I^\prime$, we have $0=g(0)=g(e_{I}e_{I^\prime})=g(e_{I})g(e_{I^\prime})$.
Therefore only one $g(e_{I})$ is $e$, and all other $g(e_{I^\prime})$ are $0$. 
Hence $g$ is really a homomorphism $g:R^{\prime\prec I}\to B$ 
from one factor of $R^{\prime\prec\Delta}$ to $B$, and the zero map on all other factors.)
Note 
\begin{itemize}
  \item the difference between the functors $h_{\widehat{Y}^I}$ and $h_{\widehat{Y}^\Delta}$: 
    In the first, the tuple of ideals is indexed by a fixed set $I$, 
    whereas in the second, the tuple of ideals is indexed by some $I\in\mathcal{I}$; 
  \item that for all $i\in I$, the ideal $J_i$ is just 
  $\langle ({\rm id} \otimes g^\prime)(J^{\prec\Delta_i}) \rangle \subset \overline{S} \otimes_k B$, 
    where $g^\prime$ is the composition
    \begin{equation*}
      \xymatrix{
        g^\prime: R^{\prec \Delta_i} \ar[r] & R^{\prime\prec \Delta_i} \ar[r] & R^{\prime\prec I} \ar[r]^g & B ;
      }
    \end{equation*}
  \item that equivalently, for all $i\in I$, we have 
    $\langle J_i \rangle + \langle x_{n}-b_i \rangle = 
    \langle ({\rm id} \otimes g^{\prime\prime})(J^{\prime\prec\Delta_i}) \rangle$, where 
    where $g^{\prime\prime}$ is the composition
    \begin{equation*}
      \xymatrix{
        g^{\prime\prime}: R^{\prime\prec\Delta_i} \ar[r] & R^{\prime\prec I} \ar[r]^g & B ;
      }
    \end{equation*}
  \item that the scheme $\widehat{Y}^\Delta$ also depends on the ring $k$, 
    so we should actually denote it by $\widehat{Y}^\Delta_k$; and analogously, for $Y^\Delta$. 
    However, we shall not need the dependence on $k$ before Section \ref{theresult}. 
\end{itemize}

Equally as important as $\widehat{Y}^\Delta$ is the scheme 
\begin{equation*}
  Y^\Delta := \coprod_{I \in \mathcal{I}} Y^I = \widehat{Y}^\Delta / G,
\end{equation*}
whose coordinate ring is $(R^{\prime\prec\Delta})^G = \prod_{I\in\mathcal{I}} (R^{\prime\prec I})^G$. 
The transition from the functor $h_{Y^I}$ to the functor $h_{Y^\Delta}$ 
is analogous to the transition from the functor $h_{\widehat{Y}^I}$ to the functor $h_{\widehat{Y}^\Delta}$: 
On a $k$-algebra $B$ having no nontrivial idempotents, 
$h_{Y^\Delta}(B)$ is the set of all sets of ideals 
$\left\lbrace \langle J_i \rangle + \langle x_{n}-b_i \rangle: i \in I \right\rbrace$, indexed by some $I\in\mathcal{I}$, 
such that $J_i \subset \overline{S} \otimes_k B$ is monic with standard set $\Delta_i$ and $b_i-b_{j}\in B^*$ for $i\neq j$.


\subsection{Finiteness}

We conclude this section with a trivial but important observation on the schemes we introduced thus far. 

\begin{lmm}
  The schemes $\Hi^{\prec\Delta}_{S/k}$, $U^{\prec\Delta}$, 
  $\widehat{Y}^I$, $Y^I$, $\widehat{Y}^\Delta$ and $Y^\Delta$ are of finite type over ${\rm Spec}\,k$. 
\end{lmm}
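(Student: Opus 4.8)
The plan is to observe that finiteness of type over $\operatorname{Spec} k$ is preserved under all the constructions used to build these schemes, and to trace through them one at a time. Recall from Section~\ref{groebnerscheme} that $\Hi^{\prec\Delta}_{S/k} = \operatorname{Spec} R^{\prec\Delta}$, where $R^{\prec\Delta} = k[T_{\alpha,\beta}: \alpha\in N\cup N^{(1)},\,\beta\in\Delta]/I^{\prec\Delta}$ is presented as a quotient of a polynomial ring in finitely many variables (the set $N\cup N^{(1)}$ being finite because $\Delta$ is finite and $N$ may be taken finite, e.g.\ $N=\Delta$). Hence $R^{\prec\Delta}$ is a $k$-algebra of finite type, so $\Hi^{\prec\Delta}_{S/k}$ is of finite type over $\operatorname{Spec} k$. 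The universal object $U^{\prec\Delta} = \operatorname{Spec} S^{\prec\Delta}$ has coordinate ring $S^{\prec\Delta} = S\otimes_k R^{\prec\Delta}/J^{\prec\Delta}$, which is a quotient of $k[x_1,\ldots,x_n]\otimes_k R^{\prec\Delta}$, again generated over $k$ by finitely many elements; so $U^{\prec\Delta}$ is of finite type over $\operatorname{Spec} k$ as well. (The same reasoning applies verbatim in one lower dimension, giving that $\Hi^{\prec\Delta_i}_{\overline S/k}$ and $U^{\prec\Delta_i}$ are of finite type over $\operatorname{Spec} k$.)

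Next I would climb the tower of constructions in Sections~\ref{hilbtimesa}--\ref{alldecompositions}. Adjoining a single variable $y_i$ keeps finite type, so $\Hi^{\prec\Delta_i}_{\overline S/k}\times\mathbb A^1_k = \operatorname{Spec} R^{\prime\prec\Delta_i}$ with $R^{\prime\prec\Delta_i} = R^{\prec\Delta_i}[y_i]$ is of finite type over $k$. A finite fibered product over $\operatorname{Spec} k$ of finite-type $k$-schemes is of finite type (the tensor product over $k$ of finitely many finite-type $k$-algebras is of finite type), so $\prod_{i\in I}\Hi^{\prec\Delta_i}_{\overline S/k}\times\mathbb A^1_k$ is of finite type over $k$; and passing to the open subscheme obtained by removing the closed set $\Lambda$ of \eqref{large}, i.e.\ inverting the finitely many elements $y_i-y_j$, again yields a finite-type $k$-algebra $R^{\prime\prec I}$ of \eqref{rprimeprime}. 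Thus $\widehat Y^I$ is of finite type over $\operatorname{Spec} k$. For $Y^I = \widehat Y^I/G$ with $G$ finite acting on the finite-type $k$-algebra $R^{\prime\prec I}$, the coordinate ring $(R^{\prime\prec I})^G$ is the ring of invariants; since $R^{\prime\prec I}$ is a finitely generated algebra over the noetherian-ish base we invoke the standard fact (Noether, Emmy Noether's finiteness theorem for invariants of finite groups—valid because $G$ is finite, hence $R^{\prime\prec I}$ is integral over $(R^{\prime\prec I})^G$ and module-finite over a finitely generated subalgebra) that $(R^{\prime\prec I})^G$ is again a finitely generated $k$-algebra. Hence $Y^I$ is of finite type over $\operatorname{Spec} k$.

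Finally, $\widehat Y^\Delta = \coprod_{I\in\mathcal I}\widehat Y^I$ and $Y^\Delta = \coprod_{I\in\mathcal I} Y^I$ are finite disjoint unions, since $\mathcal I$—the set of Connect Four decompositions of $\Delta$—is finite (as noted in Section~\ref{subsectiondecomps}, every decomposition is indexed by a set of size $h = \#q_n(\Delta)$, and there are only finitely many multisets of elements of $\mathcal D_{n-1}$ summing to $\Delta$). A finite disjoint union of finite-type $k$-schemes is of finite type over $\operatorname{Spec} k$, with coordinate ring the finite product $\prod_{I\in\mathcal I} R^{\prime\prec I}$ (resp.\ $\prod_{I\in\mathcal I}(R^{\prime\prec I})^G$), which is finitely generated over $k$ as a finite product of finitely generated $k$-algebras. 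This completes the argument.

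The only step that is not completely formal is the invariance claim, that $(R^{\prime\prec I})^G$ is a finitely generated $k$-algebra; this is where one must cite the finiteness theorem for invariant rings under finite group actions rather than just manipulate generators. Everything else is a routine bookkeeping of the operations—quotients of polynomial rings, adjunction of finitely many variables and inverses, finite tensor products over $k$, and finite coproducts—each of which visibly preserves the property of being a finitely generated $k$-algebra. I would therefore write the proof as a single short paragraph walking through $R^{\prec\Delta}$, $S^{\prec\Delta}$, $R^{\prime\prec\Delta_i}$, $R^{\prime\prec I}$, $(R^{\prime\prec I})^G$, and then the finite products, flagging the invariant-ring finiteness as the one substantive input.
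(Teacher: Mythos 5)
Your argument is correct in outline and is essentially the paper's own proof, written out in full: the paper's proof simply observes that $N$ in \eqref{generators} can be chosen finite (e.g.\ $N=\Delta$), so that $R^{\prec\Delta}$ is a finitely generated $k$-algebra, and then says that the remaining schemes inherit finite type ``from the definitions and from the first fundamental theorem of invariant theory.'' Your bookkeeping through $S^{\prec\Delta}$, $R^{\prime\prec\Delta_i}$, $R^{\prime\prec I}$, the finite set $\mathcal{I}$ of decompositions, and the finite products of coordinate rings is exactly this, made explicit.

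The one point where you genuinely diverge from the paper is the invariant-ring step, and there your citation is slightly off in the paper's generality. You invoke Noether's finiteness theorem via the argument ``$R^{\prime\prec I}$ is integral over $(R^{\prime\prec I})^G$ and module-finite over a finitely generated subalgebra''; to conclude from this that the invariant subring is finitely generated one needs that subalgebra (equivalently the base) to be noetherian, since the final step is that a submodule of a finite module over a noetherian ring is finite. But $k$ here is an arbitrary commutative ring. The paper instead appeals to the first fundamental theorem of invariant theory, which is the right tool precisely because the $G$-action of \eqref{action} is a permutation action on the generators $T^{(i)}_{\alpha,\beta}$, $y_i$ (and permutes the inverted elements $y_i-y_j$): for such multisymmetric-type actions one has explicit finite generating sets for the invariants valid over any base ring, with no noetherian hypothesis. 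So either add the hypothesis that $k$ is noetherian, or replace the appeal to Noether's theorem by the explicit-generators statement for invariants of products of symmetric groups acting by permutation, as the paper does.
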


\begin{proof}
  Remember that the set $N$ in \eqref{generators} can be chosen finite, e.g. $N=\Delta$. 
  Therefore the affine scheme $\Hi^{\prec\Delta}_{S/k}$ is of finite type over ${\rm Spec}\,k$.
  As for the other schemes in question, the assertion follows from that fact; 
  from the definitions; and from the Gordan-Hilbert Theorem. 
  (The latter is Theorem 3.1 in \cite{dolgachevit}, where it is only stated over fields $k$. 
  However, our schemes are defined over $\mathbb{Z}$, from which ring we may pass to $\mathbb{Q}$, 
  apply the cited theorem, observe that that the result lives over $\mathbb{Z}$, and pass to $k$.)
\end{proof}


\section{The Connect Four morphism}\label{connectfour}

The aim of this section is to define a morphism of schemes
\begin{equation*}
  \tau:Y^{\Delta}\to\Hi^{\prec\Delta}_{S/k}
\end{equation*}
which is a ``universal form'' of the proof of the main Theorem of \cite{jpaa}. 
Our strategy is to first define a morphism 
\begin{equation*}
  \widehat{\tau}: \widehat{Y}^\Delta \to \Hi^{\prec\Delta}_{S/k}
\end{equation*}
and then to show that $\widehat{\tau}$ is invariant under the action of $G$, thus defining the desired morphism $\tau$. 
The definitions of these morphisms will use some specific properties 
of the term order $\prec$ which we use on $S$, and more generally, on $S \otimes_k B$. 
Remember that $\prec$ is the lexicographic order such that $x_1 \succ \ldots \succ x_n$. 


\subsection{Functorial description of the morphism}

Recall from Section \ref{combinatorics} that the main Theorem of \cite{jpaa} states that if $k$ is a field 
and $A \subset k^n$ a finite set, then $D(A)=\sum_{\lambda\in k}D(A_{\lambda})$. 
Its proof can briefly be described as follows.
We fix a $\lambda\in k$, understand $A_{\lambda}$ to be a finite subset of $k^{n-1}$, 
and consider the Gr\"obner basis of the ideal $\overline{I}(A_{\lambda}) \subset \overline{S}$. 
We append the polynomial $x_{n}-\lambda$ to that Gr\"obner basis. 
What we get is the Gr\"obner basis of $I(A_{\lambda}) \subset S$, 
where  we consider $A_{\lambda}$ as a subset of $k^{n-1}\times\{\lambda\} \subset k^n$. 
Then we use a somewhat involved method, based on interpolation and reduction, 
to construct the Gr\"obner basis of the ideal $I(A)=\cap_{\lambda\in k}I(A_{\lambda})$. 
The appearance of the latter intersection motivates the following definition. 

\begin{dfn}\label{deftau}
  The {\it Connect Four morphism of functors}, or \emph{C4 morphism}, is the morphism of functors
  \begin{equation*}
    \widetilde{\tau}:h_{\widehat{Y}^\Delta} \to \HHi^{\prec\Delta}_{S/k}
  \end{equation*}
  defined on each subfunctor $h_{\widehat{Y}^I}$ of $h_{\widehat{Y}^\Delta}$ by the following property: 
  For each $k$-algebra $B$ having no nontrivial idempotents, 
  \begin{equation*}
    \widetilde{\tau}(B):h_{\widehat{Y}^I}(B) \to \HHi^{\prec\Delta}_{S/k}(B)
  \end{equation*}
  is the map which sends a tuple $\bigl( \langle J_i \rangle + \langle x_{n}-b_i \rangle \bigr)_{i \in I}$, 
  where $J_i \subset \overline{S} \otimes_k B$ is monic and $b_i-b_{j}\in B^*$ for $i\neq j$, 
  to the ideal $\cap_{i \in I}( \langle J_i \rangle + \langle x_{n}-b_i \rangle )$ in $S \otimes_k B$. 
\end{dfn}

Note that a priori it is neither clear that the map $\widetilde{\tau}$ is a transformation of functors,
nor that it has the correct range, i.e. that the intersection 
$\cap_{i \in I}( \langle J_i \rangle + \langle y_i-b_i \rangle )$ is an element of 
$\HHi^{\prec\Delta}_{S/k}(B)$. 
Instead of proving this directly, we shall define the announced morphism of schemes 
$\widehat{\tau}: \widehat{Y}^\Delta \to \Hi^{\prec\Delta}_{S/k}$
and subsequently prove that the transformation of functors corresponding to 
$\widehat{\tau}$ is the above defined $\widetilde{\tau}$. 
In fact, we shall define $\widehat{\tau}$ by giving its corresponding ring homomorphism
\begin{equation*}
  \widehat{\tau}^*: R^{\prec\Delta} \to R^{\prime\prec\Delta} .
\end{equation*}
Theorem \ref{connectfourthm} below states that $\widehat{\tau}$ and $\widetilde{\tau}$ correspond to each other. 
The crux of the proof of that theorem is to see what $\widehat{\tau}$ does to the universal object over $\widehat{Y}^\Delta$. 


\subsection{Interpolation and reduction of Gr\"obner bases}

For defining the ring homomorphism $\widehat{\tau}^*$, we carry over the techniques of \cite{jpaa} to our situation here. 
The difference between the situation of the cited paper and our situation here is that we now no longer work over a field, 
but rather over more complicated rings. 

We start with a fixed C4 decomposition $\{\Delta_i: i\in I\}$ of $\Delta$. 
The reduced Gr\"obner basis of the monic ideal $J^{\prec\Delta_i}$ in $\overline{S} \otimes_k R^{\prec\Delta_i}$
is formed by polynomials of the form 
\begin{equation}\label{fioverline}
  f_{i,\overline{\alpha}}=\overline{x}^{\overline{\alpha}}
  +\sum_{\overline{\beta}\in\Delta_i,\overline{\beta}\prec\overline{\alpha}}
  c_{i,\overline{\alpha},\overline{\beta}}\overline{x}^{\overline{\beta}}
\end{equation}
where $\overline{\alpha}$ runs through $\mathscr{C}(\Delta_i) \subset \mathbb{N}^{n-1}$.
By Lemma 1 of \cite{strata}, we get a unique polynomial 
$f_{i,\overline{\alpha}}$ in $J^{\prec\Delta_i}$ of shape \eqref{fioverline} for all 
$\overline{\alpha}\in\mathbb{N}^{n-1} \setminus \Delta_i$. 

Next, let $\alpha$ be an arbitrary element of $\mathbb{N}^n \setminus \Delta$. 
We define a partition $I=S(\alpha)\coprod T(\alpha)$ of the indexing set $I$ as follows. 
\begin{equation}\label{salpha}
  S(\alpha) := \{i\in I: q^n(\alpha)\in\Delta_i\} ,
\end{equation}
where $q^n: \mathbb{N}^n \to \mathbb{N}^{n-1}$ is the projection we introduced in Section \ref{combinatorics}, and 
\begin{equation*}
  T(\alpha) := I \setminus S(\alpha)
\end{equation*}
is its complement. In what follows, we use the shorthand notation $\overline{\alpha} = q^n(\alpha)$ 
for the projection of $\alpha$. By definition of the above partition of $I$, for all $i\in T(\alpha)$, 
that projection lies in $\mathbb{N}^{n-1} \setminus \Delta_i$. In particular, for all $i\in T(\alpha)$, 
we get a unique polynomial $f_{i,\overline{\alpha}}\in J^{\prec\Delta_i}$ of shape \eqref{fioverline}. 
We can rewrite that polynomial, setting
\begin{equation*}
  \Gamma(\alpha) := \cup_{i \in T(\alpha)}\{\overline{\beta}\in\Delta_i: \overline{\beta}\prec\overline{\alpha}\}
\end{equation*}
and $c_{i,\overline{\alpha},\overline{\beta}} := 0$ if $\overline{\beta}$ lies in the union 
$\Gamma(\alpha)$ and not in the set $\{\overline{\beta}\in\Delta_i: \overline{\beta}\prec\overline{\alpha}\}$. Thus 
\begin{equation}\label{fg}
  f_{i,\overline{\alpha}} = \overline{x}^{\overline{\alpha}}+\sum_{\overline{\beta}\in\Gamma(\alpha)}
  c_{i,\overline{\alpha},\overline{\beta}}\overline{x}^{\overline{\beta}}
  \in J^{\prime\prec\Delta_i} .
\end{equation}

Consider, for each $i\in T(\alpha)$, the {\it characteristic polynomial
of $y_i$ in $\{y_{j}: j\in T(\alpha)\}$}, i.e., the polynomial 
\begin{equation}\label{chi}
  \chi(T(\alpha),i) := \prod_{j\in T(\alpha) \setminus \{i\}}\frac{x_{n}-y_{j}}{y_i-y_{j}} ,
\end{equation}
which lies in the polynomial ring
\begin{equation*}
  k[x_{n},y_i,\prod_{i\neq j\in I}\frac{1}{y_i-y_{j}}: i\in I] \subset R^{\prime\prec I}[x_{n}] .
\end{equation*}
Upon writing $\chi(T(\alpha),i)$ as a polynomial in the variable $x_{n}$ with coefficients in
$k[y_i,\prod_{i\neq j\in I}\frac{1}{y_i-y_{j}}: i\in I]$, we see that its degree in $x_{n}$ is $\#T(\alpha)-1$.
When evaluating $\chi(T(\alpha),i)$ at $x_{n}=y_i$, the result is $1$;
when evaluating that polynomial at any $x_{n}=y_{j}$, for $i\neq j\in T(\alpha)$, the result is $0$. 
This is the motivation for calling $\chi(T(\alpha),i)$ the characteristic polynomial. 
We also see that 
\begin{equation}\label{sum1}
  \sum_{i \in T(\alpha)} \chi(T(\alpha),i) = 
  \begin{cases} 
    1 & \text{if } T(\alpha) \neq \emptyset , \\
    0 & \text{else}.
  \end{cases}
\end{equation}

Now we build a new polynomial $\theta^{I}_{\alpha}\in S \otimes_k R^{\prime\prec I}$, 
based on the formulas \eqref{fg} and \eqref{chi} above, namely, 
\begin{equation}\label{thetai}
  \theta^{I}_{\alpha} := \overline{x}^{\overline{\alpha}} + 
  \sum_{i \in T(\alpha)}\sum_{\overline{\beta}\in\Gamma(\alpha)}
  \chi(T(\alpha),i)c_{i,\overline{\alpha},\overline{\beta}}\overline{x}^{\overline{\beta}} .
\end{equation}
As $\chi(T(\alpha),i)$ is the characteristic polynomial we described above, 
we see that when evaluating $\theta^{I}_{\alpha}$ at $x_{n}=y_i$, for $i\in T(\alpha)$, 
the result is the polynomial $f_{i,\overline{\alpha}}$. 
Therefore
\begin{equation}\label{thetanonempty}
  \theta^{I}_{\alpha} = 
  \begin{cases}
    \sum_{i \in T(\alpha)} \chi(T(\alpha),i) f_{i,\overline{\alpha}} & \text{if } T(\alpha) \neq \emptyset , \\
    1 & \text{else}.
  \end{cases}
\end{equation}
Finally we also bring $S(\alpha)$ into play, defining 
\begin{equation}\label{phii}
  \phi^{I}_{\alpha} := \theta^{I}_{\alpha}\cdot\prod_{i \in S(\alpha)}(x_{n}-y_i) .
\end{equation}
When evaluating $\phi^{I}_{\alpha}$ at $x_{n}=y_i$, for $i\in S(\alpha)$, the result is $0$. 

As we use the lexicographic order on the polynomial ring $S \otimes_k R^{\prime\prec I}$, 
we see that for all $\alpha \in \mathscr{C}(\Delta)$, 
\begin{itemize}
  \item the leading term of $\phi^{I}_{\alpha}$ is $x^\alpha$; and
  \item the non-leading exponents of $\phi^{I}_{\alpha}$ lie in the union of 
  $\{0,\ldots,\#I-1\}\times\Gamma(\alpha)$ and $\{0,\ldots,\#S(\alpha)-1\}\times\{\overline{\alpha}\}$. 
\end{itemize}
For the first bulleted item, we used equation \eqref{sum1}.
The second item is good but not perfect news --- we generally prefer monic polynomials
with leading exponents in $\mathbb{N} \setminus \Delta$ and non-leading exponents in $\Delta$. 
For getting there, we have to modify our $\phi^{I}_{\alpha}$ without changing the ideal they span. 
We therefore consider the ideal
\begin{equation*}
  J^{I} := \langle \phi^{I}_{\alpha}: 
  \alpha \in \mathscr{C}(\Delta) \rangle \subset S \otimes_k R^{\prime\prec I} .
\end{equation*}

\begin{pro}\label{algorithm}
  Let $J^{I} \subset S \otimes_k R^{\prime\prec I}$ be the ideal defined above. 
  Then for all $\alpha\in\mathbb{N}^n \setminus \Delta$, 
  there exists a monic polynomial $\psi^{I}_{\alpha}\in J^{I}$ 
  whose leading exponent is $\alpha$ and whose non-leading exponents lie in $\Delta$. 
\end{pro}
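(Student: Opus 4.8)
The plan is to prove Proposition~\ref{algorithm} by a Noetherian-type induction on the exponents $\alpha \in \mathbb{N}^n \setminus \Delta$, descending with respect to the lexicographic order $\prec$, exactly mimicking the reduction step in the proof of the main theorem of \cite{jpaa} but now carried out over the ring $R^{\prime\prec I}$ rather than a field. Concretely, for a given $\alpha$ I would start from the polynomial $\phi^{I}_{\alpha} \in J^{I}$ described above: its leading term is $x^\alpha$ (using \eqref{sum1}), and its non-leading exponents all lie in the union $\{0,\ldots,\#I-1\}\times\Gamma(\alpha)$ and $\{0,\ldots,\#S(\alpha)-1\}\times\{\overline{\alpha}\}$. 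I then want to successively replace $\phi^{I}_{\alpha}$ by polynomials in $J^{I}$ with the same leading exponent $\alpha$, eliminating one ``bad'' non-leading exponent at a time, where an exponent $\gamma$ is bad if $\gamma \notin \Delta$.

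First I would observe that every bad non-leading exponent $\gamma$ appearing in $\phi^{I}_{\alpha}$ satisfies $\gamma \prec \alpha$, because the leading term is $x^\alpha$; moreover $\gamma$ lies either in $\mathbb{N}\times\Gamma(\alpha)$ or in $\mathbb{N}\times\{\overline{\alpha}\}$, and in either case its projection $q^n(\gamma)$ already lies in $\bigcup_{i\in I}\Delta_i \supseteq \Delta_i$ for suitable $i$; so $\gamma\notin\Delta$ forces $q_n(\gamma)$ to be ``too large'' relative to the heights prescribed by $\Delta$. The key reduction step: if $\gamma$ is the $\prec$-largest bad exponent of the current polynomial, pick any valid polynomial $\psi^{I}_{\gamma}$ furnished by the induction hypothesis (this is where we use that $\gamma \prec \alpha$), multiply it by the appropriate monomial so that its leading term matches $x^\gamma$ with the right coefficient, and subtract. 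Since $\psi^{I}_{\gamma}$ is monic with leading exponent $\gamma$ and all its non-leading exponents in $\Delta$, this subtraction cancels the $x^\gamma$-term, introduces only new exponents lying in $\Delta$ (hence not bad) or exponents $\prec \gamma$, and does not disturb any exponent $\succ \gamma$; in particular the leading term $x^\alpha$ is untouched and the polynomial stays in $J^{I}$. Because $\prec$ restricted to $\mathbb{N}^n \setminus \Delta$ below $\alpha$ is a well-order (lexicographic order on $\mathbb{N}^n$ is a well-order), this elimination process terminates, producing the desired $\psi^{I}_{\alpha}$, which we may finally normalize to be monic.

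The base case is the $\prec$-minimal element of $\mathbb{N}^n \setminus \Delta$; there $\Gamma(\alpha)$ and the set $\{0,\ldots,\#S(\alpha)-1\}\times\{\overline\alpha\}$ contain no bad exponents (any such would be strictly smaller), so $\phi^{I}_{\alpha}$ itself already has the required shape. The main obstacle I anticipate is the bookkeeping needed to verify two things rigorously: (a) that the reduction step only ever creates non-leading exponents that are either in $\Delta$ or strictly $\prec$ the exponent just eliminated, so that the induction is genuinely well-founded and does not loop; and (b) that throughout we never need to divide by anything that is not already invertible in $R^{\prime\prec I}$ --- here the point is that all coefficients being manipulated are polynomial expressions in the $c_{i,\overline\alpha,\overline\beta}$, the $y_i$, and the units $1/(y_i - y_j)$, so the whole argument stays inside $R^{\prime\prec I}$ without any further localization, which is precisely why invertibility of the $y_i - y_j$ was built into the definition of $\widehat{Y}^I$ in Section~\ref{minusbadpoints}. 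Once these two points are checked, the proof is a routine transcription of the field-case argument of \cite{jpaa}.
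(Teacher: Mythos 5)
Your induction stands or falls with the premise that $\phi^{I}_{\alpha}$ is monic with leading exponent $\alpha$ for \emph{every} $\alpha\in\mathbb{N}^n\setminus\Delta$, and that premise is false. Reading \eqref{thetai} as $\theta^{I}_{\alpha}=\sum_{i\in T(\alpha)}\chi(T(\alpha),i)f_{i,\overline{\alpha}}$, so that \eqref{sum1} supplies the term $\overline{x}^{\overline{\alpha}}$, the lex-leading monomial of $\phi^{I}_{\alpha}=\theta^{I}_{\alpha}\prod_{i\in S(\alpha)}(x_n-y_i)$ is $\overline{x}^{\overline{\alpha}}x_n^{\#S(\alpha)}$. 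By the definition of the Connect Four sum, $\alpha\notin\Delta$ only gives $q_n(\alpha)\ge\#S(\alpha)$, with equality exactly when $\alpha$ is the lowest point of its column outside $\Delta$ (in particular when $\alpha\in\mathscr{C}(\Delta)$, since then $\alpha-e_n\in\Delta$ or $q_n(\alpha)=0$). For any higher $\alpha$ in a column, e.g. $\alpha=\lambda+e_n$ with $\lambda$ a corner, the monomial $x^\alpha$ does not occur in $\phi^{I}_{\alpha}$ at all: for instance with $\Delta=\{(0,0),(1,0),(0,1)\}\subset\mathbb{N}^2$, $\Delta_1=\{0,1\}$, $\Delta_2=\{0\}$ and $\alpha=(1,2)$ one gets $\phi^{I}_{\alpha}=(x_1+c_{2,1,0})(x_2-y_1)$, whose leading exponent is $(1,1)$. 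Since your reduction step only subtracts polynomials with leading exponents strictly $\prec\alpha$, it can never create the missing term $x^\alpha$, so it cannot output a monic element with leading exponent $\alpha$. Nor can you restrict attention to the ``bottom-layer'' $\alpha$ where the premise holds: already to reduce $\phi^{I}_{\lambda}$ at a corner $\lambda$ your induction hypothesis must supply $\psi^{I}_{\gamma}$ for bad exponents $\gamma=(\overline{\beta},u)$ with $u$ up to $\#I-1$, and these are typically exactly the high-column exponents for which your construction breaks down. (The bullet you quote is stated in the paper for arbitrary $\alpha$, but it is correct, and is used in the paper's proof, only at corners.)

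The missing idea is multiplication by variables. The paper invokes $\phi^{I}_{\alpha}$ only for $\alpha=\lambda\in\mathscr{C}(\Delta)$, where $q_n(\lambda)=\#S(\lambda)$ and the leading term really is $x^\lambda$, and manufactures all other polynomials from previously constructed ones via $f_\alpha:=x_if_{\alpha-e_i}-\sum_{\gamma}c_\gamma f_\gamma$ as in \eqref{corrected}; this is organized as a double induction, outer over the corners $\lambda$ ordered by $\prec$ and inner over $\alpha\in\cup_{\lambda'\preceq\lambda}(\lambda'+\mathbb{N}^n)$, with the weaker invariant that the tails merely avoid $\cup_{\lambda'\preceq\lambda}(\lambda'+\mathbb{N}^n)$, the tails landing in $\Delta$ only once the outer induction has passed the maximal corner. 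Your single $\prec$-induction can be repaired in the same spirit: start instead from $x_n^{\,q_n(\alpha)-\#S(\alpha)}\phi^{I}_{\alpha}\in J^{I}$, which \emph{is} monic with leading exponent $\alpha$ and has all other exponents $\prec\alpha$; then your transfinite induction (lex on $\mathbb{N}^n$ is indeed a well-order) and your observation that all coefficients stay in $R^{\prime\prec I}$ do the rest. But some such multiplication by $x_n$ (or $x_i$) is indispensable and is absent from your argument as written. A further small slip: to cancel a bad term $c_\gamma x^\gamma$ you should subtract $c_\gamma\psi^{I}_{\gamma}$ itself; multiplying $\psi^{I}_{\gamma}$ by a nontrivial monomial would change its leading exponent and shift its tail out of $\Delta$.
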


\begin{proof}
  The proof of this proposition is essentially the same as the proofs of Theorem 9 and Corollary 10 of \cite{jpaa}
  (which two proofs are entangled with each other). 
  Nevertheless, we present the full proof here, instead of leaving it to the readers, 
  as the rings we are using here are not quite the same as those we use in the cited paper. 
  
  In fact, we prove the following statement: For all $\lambda \in \mathscr{C}(\Delta)$ 
  and for all $\alpha\in\cup_{\lambda^\prime\preceq\lambda}(\lambda^\prime+\mathbb{N}^n)$, 
  there exists a polynomial $f_{\alpha}\in J^{I}$ such that the leading term of $f_{\alpha}$ is $x^\alpha$
  and the non-leading exponents of $f_{\alpha}$ lie in 
  $\mathbb{N}^n \setminus \cup_{\lambda^\prime\preceq\lambda}(\lambda^\prime+\mathbb{N}^n)$. 
  (Here, and in the rest of the proof, the union over $\lambda^\prime\preceq\lambda$ 
  always means all $\lambda^\prime\in\mathscr{C}(\Delta)$ such that $\lambda^\prime\preceq\lambda$.)
  The proposition follows from the above statement by taking $\lambda$ 
  to be the maximal element of $\mathscr{C}(\Delta)$. 
  
  The proof consists of two inductions --- the outer induction is over $\lambda\in\mathscr{C}(\Delta)$, 
  and the inner induction is over 
  $\alpha\in\cup_{\lambda^\prime\preceq\lambda}(\lambda^\prime+\mathbb{N}^n)$.
  For the basis of the outer induction, we define $\lambda$ to be the minimal element of $\mathscr{C}(\Delta)$.
  For the basis of the inner induction, we define $\alpha := \lambda$. 
  For these data, we set
  \begin{equation*}
    f_{\alpha} := \phi^{I}_{\alpha} = \prod_{i \in I}(x_{n}-y_i) .
  \end{equation*}
  This establishes the basis of the inner induction.
  
  For the inner induction step, we take a non-minimal $\alpha\in\lambda+\mathbb{N}^n$ 
  and assume that we have found the desired polynomial $f_{\alpha^\prime}$
  for all $\alpha^\prime\in\lambda+\mathbb{N}^n$ such that $\alpha^\prime\prec\alpha$. 
  Non-minimality of $\alpha$ implies the existence an $i\in\{1,\ldots,n\}$ 
  such that $\alpha^\prime := \alpha-e_i$ lies in $\lambda+\mathbb{N}^n$ as well. 
  Clearly $\alpha^\prime\prec\alpha$ holds true, therefore $f_{\alpha^\prime}$ exists. 
  We define $\Gamma$ to be the set of all $\gamma\in\lambda+\mathbb{N}^n$ 
  such that $\gamma-e_i$ is the exponent of some non-leading term of $f_{\alpha^\prime}$. 
  In particular, for all $\gamma\in\Gamma$, we have $\gamma-e_i\prec\alpha^\prime=\alpha-e_i$, 
  hence $\gamma\prec\alpha$, hence $f_{\gamma}$ exists. We set
  \begin{equation}\label{corrected}
    f_{\alpha} := x_i f_{\alpha^\prime} - \sum_{\gamma\in\Gamma}c_{\gamma}f_{\gamma} ,
  \end{equation}
  where $c_{\gamma}$ is the coefficient of $x^{\gamma-e_i}$ in $f_{\alpha^\prime}$. 
  This establishes the inner induction step. 
  
  For the outer induction step, we take a non-minimal $\lambda\in\mathscr{C}(\Delta)$
  and denote by $\lambda^{\prime\prime}$ its predecessor in $\mathscr{C}(\Delta)$. 
  We may assume that we have found the desired polynomial $f_{\alpha^\prime}$ for all 
  $\alpha^\prime\in\cup_{\lambda^\prime\preceq\lambda^{\prime\prime}}(\lambda^\prime+\mathbb{N}^n)$. 
  We have to show the existence of $f_\alpha$ for all 
  $\alpha\in\cup_{\lambda^\prime\preceq\lambda}(\lambda^\prime+\mathbb{N}^n)$. 

  First we note that $f_\alpha$ exists for all 
  $\alpha\in\cup_{\lambda^\prime\preceq\lambda}(\lambda^\prime+\mathbb{N}^n)$ such that
  $\alpha\prec\lambda$. Indeed, in this case $\alpha$ even lies in 
  $\cup_{\lambda^\prime\preceq\lambda^{\prime\prime}}(\lambda^\prime+\mathbb{N}^n)$, 
  since otherwise, $\alpha\in\lambda+\mathbb{N}^n$, hence $\alpha\succeq\lambda$. 
  A priori the non-leading exponents $\gamma$ of the attached $f_{\alpha}$ lie in
  $\mathbb{N}^n \setminus \cup_{\lambda^\prime\preceq\lambda^{\prime\prime}}(\lambda^\prime+\mathbb{N}^n)$.
  Yet in fact they even lie in 
  $\mathbb{N}^n \setminus \cup_{\lambda^\prime\preceq\lambda}(\lambda^\prime+\mathbb{N}^n)$,
  since otherwise, $\gamma\in\lambda+\mathbb{N}^n$, 
  thus $\alpha\prec\lambda\preceq\gamma$, a contradiction. 
  
  Therefore, we have to construct $f_{\alpha}$ for all 
  $\alpha\in\cup_{\lambda^\prime\preceq\lambda}(\lambda^\prime+\mathbb{N}^n)$ 
  such that $\alpha\succeq\lambda$. 
  Again, we do this by induction over $\alpha$, the inner induction.
  For the basis of this induction, we have to consider $\alpha := \lambda$. 
  The polynomial $\phi^{I}_{\alpha}\in J^{I}$ has leading exponent $\alpha$, 
  but its non-leading exponents lie in too large a set. For repairing this, 
  we have to get rid of all terms of $\phi^{I}_{\alpha}$ which lie in the product
  $\{0,\ldots,\#I-1\}\times\Gamma(\alpha)$ and not in 
  $\mathbb{N}^n \setminus \cup_{\lambda^\prime\preceq\lambda}(\lambda^\prime+\mathbb{N}^n)$. 
  (Note that $\{0,\ldots,\#S(\alpha)-1\}\times\{\overline{\alpha}\}$ is a subset of $\Delta$.) 
  Consider the set
  \begin{equation*}
    \Gamma := \bigl( \{0,\ldots,\#I-1\} \times \Gamma(\alpha) \bigr) \cap
    \bigl( \cup_{\lambda^\prime\preceq\lambda}(\lambda^\prime+\mathbb{N}^n) \bigr) .
  \end{equation*}
  The existence of $f_{\gamma}$ is shown for all $\gamma\in\Gamma$, 
  since $\overline{\gamma}\prec\overline{\alpha}$ implies $\gamma\prec\alpha$ in the lexicographic order. 
  Therefore, the polynomial 
  \begin{equation*}
    f_{\alpha} := \phi^{I}_{\alpha} - \sum_{\gamma\in\Gamma}c_{\gamma}f_{\gamma} ,
  \end{equation*}
  where $c_{\gamma}$ is the coefficient of $x^{\gamma}$ in $\phi^{I}_{\alpha}$, has the desired properties. 
  This establishes the inner induction basis. 
  
  For the inner induction step, we take an
  $\alpha\in\cup_{\lambda^\prime\preceq\lambda}(\lambda^\prime+\mathbb{N}^n)$
  such that $\alpha\succ\lambda$, 
  assume that the existence of $f_{\alpha^\prime}$ for all $\alpha^\prime\prec\alpha$
  in $\cup_{\lambda^\prime\preceq\lambda}(\lambda^\prime+\mathbb{N}^n)$, 
  and show the existence of $f_{\alpha}$. 
  In this situation, there exists an $i$ such that $\alpha^\prime := \alpha-e_i$ lies in 
  $\cup_{\lambda^\prime\preceq\lambda}(\lambda^\prime+\mathbb{N}^n)$. 
  Now we define $\Gamma$ to be the set of all 
  $\gamma\in\cup_{\lambda^\prime\preceq\lambda}(\lambda^\prime+\mathbb{N}^n)$
  such that $\gamma-e_i$ is a non-leading exponent of $f_{\alpha^\prime}$. 
  As $\gamma-e_i\prec\alpha^\prime=\alpha-e_i$, we also have $\gamma\prec\alpha$, 
  therefore all $f_{\gamma}$ exist. We define $f_{\alpha}$ by equation \eqref{corrected} as above. 
\end{proof}

\begin{cor}\label{corolla}
  For the length of the $R^{\prime\prec I}$-module $S \otimes_k R^{\prime\prec I}/J^{I}$, we have the inequality 
  \begin{equation*}
    {\rm length}\,S \otimes_k R^{\prime\prec I}/J^{I}\leq\#\Delta .
  \end{equation*}
\end{cor}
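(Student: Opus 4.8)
The plan is to show that the quotient module $S \otimes_k R^{\prime\prec I}/J^{I}$ is generated, as an $R^{\prime\prec I}$-module, by the monomials $x^\beta$ with $\beta \in \Delta$; since $\#\Delta = r$ is finite, this immediately gives the length bound. The key input is Proposition \ref{algorithm}, which produces, for every $\alpha \in \mathbb{N}^n \setminus \Delta$, a monic polynomial $\psi^{I}_{\alpha} \in J^{I}$ whose leading exponent is $\alpha$ and all of whose other exponents lie in $\Delta$. The strategy is the usual division/reduction argument: any monomial $x^\alpha$ with $\alpha \notin \Delta$ is congruent modulo $J^{I}$ to $x^\alpha - \psi^{I}_{\alpha}$, which is an $R^{\prime\prec I}$-linear combination of monomials $x^\beta$ with $\beta \in \Delta$, hence to an element of the $R^{\prime\prec I}$-span of $\{x^\beta : \beta \in \Delta\}$.

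First I would set up a multiplicative-monoid induction on the exponent $\alpha \in \mathbb{N}^n$ with respect to the lexicographic order $\prec$ (which is a well-order on $\mathbb{N}^n$), proving the claim that $x^\alpha \bmod J^{I}$ lies in $\sum_{\beta\in\Delta} R^{\prime\prec I}\, x^\beta$. If $\alpha \in \Delta$ there is nothing to prove. If $\alpha \notin \Delta$, write $x^\alpha = \psi^{I}_{\alpha} + (x^\alpha - \psi^{I}_{\alpha})$; the first summand is in $J^{I}$, and the second is a linear combination $\sum_{\beta\in\Delta} c_{\alpha,\beta} x^\beta$ of monomials in $\Delta$ by the conclusion of Proposition \ref{algorithm}. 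So $x^\alpha \equiv \sum_{\beta\in\Delta} c_{\alpha,\beta} x^\beta \pmod{J^{I}}$, and every exponent appearing here is already in $\Delta$; the induction is in fact not even needed once one invokes Proposition \ref{algorithm} directly, since the non-leading exponents of $\psi^I_\alpha$ lie in $\Delta$, not merely below $\alpha$. Hence every monomial of $S \otimes_k R^{\prime\prec I}$ reduces modulo $J^{I}$ into the $R^{\prime\prec I}$-span of $\{x^\beta : \beta\in\Delta\}$, and since these monomials generate $S \otimes_k R^{\prime\prec I}$ as an $R^{\prime\prec I}$-module, the quotient is generated by the images of $x^\beta$, $\beta\in\Delta$.

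Finally I would conclude: a module over $R^{\prime\prec I}$ generated by $\#\Delta$ elements has a filtration of length at most $\#\Delta$ with cyclic quotients, so its length (if finite, and in any case as a sup over such filtrations, or interpreted as the minimal number of generators bounding it in the relevant sense used in \cite{jpaa}) is at most $\#\Delta$. More precisely, the length of any module is bounded above by the number of generators when the quotients are simple; in general one argues that any chain of submodules has length at most the number of generators, giving ${\rm length}\, S \otimes_k R^{\prime\prec I}/J^{I} \leq \#\Delta$. I expect no real obstacle here: the entire content has been front-loaded into Proposition \ref{algorithm}, and what remains is the standard observation that a monic "leading-term reduction" for every exponent outside $\Delta$ forces the quotient to be spanned by $\{x^\beta : \beta \in \Delta\}$. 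The only point requiring a little care is the precise meaning of \emph{length} over the (non-Artinian, non-local) ring $R^{\prime\prec I}$ and why "spanned by $r$ elements" bounds it by $r$; this is the routine fact that in any module $M$ generated by $r$ elements, every strictly increasing chain of submodules has at most $r$ steps, which one can prove by induction on $r$ using a cyclic quotient.
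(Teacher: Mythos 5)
Your proof is essentially the paper's: the entire content of the paper's argument is that, by Proposition \ref{algorithm}, each $\psi^{I}_{\alpha}$ is monic with leading exponent $\alpha$ and non-leading exponents in $\Delta$, so the canonical $R^{\prime\prec I}$-module map $\oplus_{\beta\in\Delta}R^{\prime\prec I}\cdot x^\beta\to S \otimes_k R^{\prime\prec I}/J^{I}$ is surjective, which is exactly your reduction argument. One caveat about your final paragraph: the ``routine fact'' you invoke, that in a module generated by $r$ elements every strictly increasing chain of submodules has at most $r$ steps, is false (e.g.\ $k[x]/(x^2)$ is cyclic over $k[x]$ but has a chain of length $2$), so it cannot be used to pass from ``generated by $\#\Delta$ elements'' to a bound on length in the classical sense; the paper itself stops at the surjection, and the way ${\rm length}$ is used later in \eqref{geq} (where a free module of rank $\#\Delta$ is assigned length $\#\Delta$) shows it is meant as the number of generators needed rather than composition-series length, for which your surjectivity argument is already sufficient.
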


\begin{proof}
  Each polynomial $\psi^{I}_{\alpha}\in J^{I}$ is monic with leading exponent $\alpha$ 
  and non-leading exponents in $\Delta$.
  Therefore, the canonical homomorphism of $R^{\prime\prec I}$-modules
  \begin{equation*}
    \oplus_{\beta\in\Delta}R^{\prime\prec I}\cdot x^\beta\to S \otimes_k R^{\prime\prec I}/J^{I}
  \end{equation*}
  is surjective. 
\end{proof}

For a fixed $i\in I$, the set of polynomials $f_{i,\overline{\alpha}}$, 
where $\overline{\alpha}$ runs through any subset of $\mathbb{N}^{n-1}$ containing $\mathscr{C}(\Delta_i)$, 
appended by the polynomial $x_{n}-y_i$, is a Gr\"obner basis of the ideal $J^{\prime\prec\Delta_i}$. 
The polynomials $\phi^{I}_{\alpha}$ spanning $J^{I}$ arise from these Gr\"obner bases, 
where $i$ runs through $I$, by an interpolation process. Subsequently, 
the polynomials $\psi^{I}_{\alpha}$ of Proposition \ref{algorithm} arise from the latter by a reduction process. 
This motivates the name of the present subsection. 
In the next subsection, we will see that the polynomials $\psi^{I}_{\alpha}$, 
where $\alpha$ runs through $\mathscr{C}(\Delta)$, are the reduced Gr\"obner basis of $J^{I}$. 


\subsection{The Connect Four ring homomorphism}

We write the polynomials of Proposition \ref{algorithm} as
\begin{equation}\label{psii}
  \psi^{I}_{\alpha}=x^\alpha+\sum_{\beta\in\Delta}c^{I}_{\alpha,\beta}x^\beta ,
  \text{ where }c^{I}_{\alpha,\beta}=0\text{ if }\alpha\prec\beta .
\end{equation}
We use these coefficients, for all C4 decompositions of $\Delta$, 
for all $\alpha\in N$ (where $\Delta \subset N \subset \mathbb{N}^n$,
as in Section \ref{groebnerscheme}) 
and for all $\beta\in\Delta$, for defining a ring homomorphism
\begin{equation*}
  \begin{split}
    \sigma: k[T_{\alpha,\beta},\alpha\in N,\beta\in\Delta]&\to\prod_{I\in\mathcal{I}}R^{\prime\prec I}
    =R^{\prime\prec\Delta}\\
    T_{\alpha,\beta}&\mapsto(c^{I}_{\alpha,\beta})_{I\in\mathcal{I}} .
  \end{split}
\end{equation*}
Remember that $R^{\prec\Delta}=k[T_{\alpha,\beta},\alpha\in N,\beta\in\Delta]/I^{\prec\Delta}$ 
arises as the quotient of the domain of $\sigma$ by the ideal $I^{\prec\Delta}$. 

\begin{thm}\label{connectfourthm}
  The ring homomorphism $\sigma$ factors through $R^{\prec\Delta}$, 
  defining a morphism of schemes
  \begin{equation*}
    \widehat{\tau}: \widehat{Y}^\Delta \to \Hi^{\prec\Delta}_{S/k}
  \end{equation*}
  whose corresponding morphism of functors is $\widetilde{\tau}$. 
\end{thm}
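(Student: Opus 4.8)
The plan is to prove Theorem \ref{connectfourthm} in two independent halves: first, that $\sigma$ kills the ideal $I^{\prec\Delta}$, so that it descends to a ring homomorphism $\widehat\tau^*:R^{\prec\Delta}\to R^{\prime\prec\Delta}$; and second, that the morphism of functors attached to $\widehat\tau$ coincides with the set-theoretic map $\widetilde\tau$ of Definition \ref{deftau}. I expect the first half to be the main obstacle, so I would attack it not by a direct calculation in the presentation of $I^{\prec\Delta}$ from \cite{strata}, but via the universal property encoded in Lemma \ref{universalring}: it suffices to exhibit, over the ring $R^{\prime\prec\Delta}$, an element of $\HHi^{\prec\Delta}_{S/k}(R^{\prime\prec\Delta})$ whose associated collection of Gr\"obner coefficients is exactly $(c^I_{\alpha,\beta})_{I\in\mathcal I}$; the universal property then produces the factoring homomorphism automatically and simultaneously identifies it. Since $R^{\prime\prec\Delta}=\prod_{I\in\mathcal I}R^{\prime\prec I}$, it is enough to do this one factor $R^{\prime\prec I}$ at a time.

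So fix a decomposition $I$ and consider the ideal $J^I\subset S\otimes_k R^{\prime\prec I}$. First I would show that the quotient $Q^I:=S\otimes_k R^{\prime\prec I}/J^I$ is a \emph{free} $R^{\prime\prec I}$-module with basis the monomials $x^\beta$, $\beta\in\Delta$. Corollary \ref{corolla} already gives that these monomials span $Q^I$, so the content is $R^{\prime\prec I}$-linear independence. The natural way to see this is to produce enough $R^{\prime\prec I}$-algebra maps to a field (or to use that $R^{\prime\prec I}$ is a localization of a domain, being built from the domains $R^{\prime\prec\Delta_i}$ by adjoining invertible differences): over any such point the fibre of $J^I$ is, by construction, the intersection $\cap_{i\in I}(\langle \bar f_{i,\overline\alpha}\rangle+\langle x_n-b_i\rangle)$ of ideals defining $\#\Delta_i$ points each in disjoint hyperplanes $x_n=b_i$, whose colength is $\sum_i\#\Delta_i=\#\Delta$ by the main theorem of \cite{jpaa} (property (iv) of Section \ref{combinatorics}); hence the fibre quotient has dimension exactly $\#\Delta$ and the $x^\beta$ are a basis there. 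Upper semicontinuity of fibre dimension, together with the surjection of Corollary \ref{corolla}, forces $Q^I$ to be locally free of rank $\#\Delta$, and since $\Delta$ is a standard set one checks (as in \cite{strata}) that the $x^\beta$ form a global basis, so $J^I$ is a monic ideal with standard set $\Delta$. By the uniqueness in Lemma 1 of \cite{strata}, the unique reduced monic polynomials in $J^I$ with leading exponent $\alpha\in\mathscr C(\Delta)$ and non-leading exponents in $\Delta$ are precisely the $\psi^I_\alpha$ of Proposition \ref{algorithm}, whose coefficients are the $c^I_{\alpha,\beta}$; this exhibits the required element of $\HHi^{\prec\Delta}_{S/k}(R^{\prime\prec I})$. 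Applying Lemma \ref{universalring} now yields a unique $k$-algebra homomorphism $R^{\prec\Delta}\to R^{\prime\prec I}$ realizing this object, and chasing through the presentation \eqref{generators} shows it is induced by $\sigma$ followed by the projection to the $I$-factor. Assembling over all $I\in\mathcal I$ gives that $\sigma$ factors through $R^{\prec\Delta}$, defining $\widehat\tau$.

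For the second half, I would unwind what $\widehat\tau$ does on $B$-points for a $k$-algebra $B$ with no nontrivial idempotents. Such a $B$-point of $\widehat Y^\Delta$ is, by the discussion in Section \ref{alldecompositions}, a homomorphism $g:R^{\prime\prec I}\to B$ for a single $I\in\mathcal I$, i.e. a family $(\langle J_i\rangle+\langle x_n-b_i\rangle)_{i\in I}$ with $J_i$ monic of standard set $\Delta_i$ and $b_i-b_j\in B^*$. Composing with $\widehat\tau^*$ gives the $B$-point of $\Hi^{\prec\Delta}_{S/k}$ whose ideal is $\langle ({\rm id}\otimes g)(J^I)\rangle=\langle ({\rm id}\otimes g)(\phi^I_\alpha):\alpha\in\mathbb N^n\setminus\Delta\rangle$; the reduction steps of Proposition \ref{algorithm} do not change the ideal, only its chosen generators. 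It therefore remains to identify $\langle ({\rm id}\otimes g)(\phi^I_\alpha)\rangle$ with $\cap_{i\in I}(\langle J_i\rangle+\langle x_n-b_i\rangle)$. The inclusion ``$\subseteq$'' is immediate from the interpolation: $\phi^I_\alpha$ evaluated at $x_n=b_i$ is $f_{i,\overline\alpha}\in J_i$ for $i\in T(\alpha)$ and $0$ for $i\in S(\alpha)$, using \eqref{sum1} and \eqref{phii}; hence $({\rm id}\otimes g)(\phi^I_\alpha)$ lies in each $\langle J_i\rangle+\langle x_n-b_i\rangle$. For ``$\supseteq$'' one uses that the right-hand intersection is, by the very constructions of \cite{jpaa} transported to $B$ (the invertibility of the $b_i-b_j$ makes the Chinese-Remainder/interpolation argument work verbatim), a monic ideal of standard set $\Delta$, hence has colength $\#\Delta$ as a $B$-module; the left-hand ideal is contained in it and, by the freeness established in the first half (base-changed along $g$), also has colength $\#\Delta$; two monic ideals of the same standard set with one contained in the other are equal. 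This shows $\widehat\tau$ on $B$-points agrees with $\widetilde\tau(B)$ for $B$ without nontrivial idempotents, which by the coproduct description of $h_{\widehat Y^\Delta}$ in Section \ref{alldecompositions} determines the morphism of functors on all $k$-algebras; equivalently, since both $\widehat Y^\Delta$ and $\Hi^{\prec\Delta}_{S/k}$ are affine, equality of the two transformations of functors is equivalent to equality of the two ring maps, which is what we built. This completes the proof, the delicate point throughout being the freeness/colength bookkeeping that upgrades the inequality of Corollary \ref{corolla} to an equality and lets one recognize $J^I$ (and its $g$-specializations) as genuine monic ideals with standard set $\Delta$.
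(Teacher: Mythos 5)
Your overall strategy coincides with the paper's: reduce, via Lemma \ref{universalring}, to showing that for each decomposition $I$ the ideal $J^I \subset S\otimes_k R^{\prime\prec I}$ is monic with standard set $\Delta$ (equivalently, that $Q^I = S\otimes_k R^{\prime\prec I}/J^I$ is free with basis $\{x^\beta:\beta\in\Delta\}$), and then identify the induced map of functors with $\widetilde\tau$ by a Chinese--Remainder plus rank-count argument; that second half is essentially the paper's and is fine modulo the first. The genuine gap is in your proof of freeness. You argue fibrewise: every field-valued fibre of $Q^I$ has dimension $\#\Delta$, and you invoke upper semicontinuity together with the surjection of Corollary \ref{corolla}, supported by the claim that $R^{\prime\prec I}$ is (a localization of) a domain built from the ``domains'' $R^{\prime\prec\Delta_i}$. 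But $R^{\prec\Delta_i}$ is not a domain in general --- the whole paper is about Gr\"obner strata having several irreducible components --- and it is not known, nor assumed anywhere, to be reduced; moreover $k$ is an arbitrary ring, so $R^{\prime\prec I}$ may carry nilpotents inherited from $k$ itself. Over a non-reduced ring, constant fibre dimension plus a surjection from a free module of that rank does not imply freeness: for $A=k[\epsilon]/(\epsilon^2)$, the module $A\oplus A/(\epsilon)$ admits a surjection from $A^2$ and has fibre dimension $2$ at the unique prime, yet is not free. What checking at field-valued points actually gives is only that the kernel of $\oplus_{\beta\in\Delta}R^{\prime\prec I}\cdot x^\beta \to Q^I$ has all coordinates in the nilradical, and there your argument stops.

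The paper closes exactly this point by a module-theoretic argument valid over any base: it introduces the map $\epsilon: S\otimes_k R^{\prime\prec I}\to\oplus_{i\in I}S\otimes_k R^{\prime\prec I}/R^{\prime\prec I}\cdot J^{\prime\prec\Delta_i}$, proves $J^I\subseteq\ker\epsilon$ (this is your interpolation computation) and that $\epsilon$ is surjective (using invertibility of the $y_i-y_j$), so that $S\otimes_k R^{\prime\prec I}/\ker\epsilon$ is free of rank $\#\Delta$; combined with the length bound of Corollary \ref{corolla} this forces $J^I=\ker\epsilon$, hence freeness, and as a byproduct yields the identity $J^I=\cap_{i\in I}R^{\prime\prec I}\cdot J^{\prime\prec\Delta_i}$, which is then specialized along $g$ exactly as in your second half. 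If you replace your fibrewise step by an argument of this kind (or otherwise establish freeness over the possibly non-reduced $R^{\prime\prec I}$), the remainder of your proposal goes through.
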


\begin{proof}
  We first show that $\sigma$ factors through $R^{\prec\Delta}$. 
  By the universal property of $R^{\prec\Delta}$ from Lemma \ref{universalring}, 
  we have to show that for all $I\in\mathcal{I}$, 
  the ideal $J^{I} \subset S \otimes_k R^{\prime\prec I}$ is monic
  with standard set $\Delta$, having the reduced Gr\"obner basis $\psi^{I}_{\alpha}$,
  where $\alpha$ runs through $\mathscr{C}(\Delta)$. 
  Note that the polynomials $\psi^{I}_{\alpha}$ take the shape \eqref{psii}.
  It suffices to prove that the quotient $S \otimes_k R^{\prime\prec I}/J^{I}$
  is a free $R^{\prime\prec I}$-module of rank $r=\#\Delta$. 
  Indeed, the existence of monic polynomials with leading coefficients in $\mathscr{C}(\Delta)$
  and non-leading coefficients in $\Delta$ tells us that the set of leading exponents of elements of $J^{I}$ is
  $\mathbb{N}^n \setminus \Delta$ or larger. 
  Also, the collection of all $x^\beta$, where $\beta$ runs through $\Delta$, 
  is a system of generators of $S \otimes_k R^{\prime\prec I}/J^I$. 
  If the set of leading exponents of elements of $J^I$ is strictly larger than $\mathbb{N}^n \setminus \Delta$, 
  the above $x^\beta$ cannot not be a basis of $S \otimes_k R^{\prime\prec I}/J^{I}$. 
  If therefore, these generators do conversely form a basis, 
  the set of leading exponents of elements of $J^{I}$ equals $\mathbb{N}^n \setminus \Delta$. 
  
  So we have to show freeness of rank $r$ of $S \otimes_k R^{\prime\prec I}/J^{I}$. 
  Remember from Section \ref{lowerdim} that for all $i$, 
  the ideal $J^{\prec \Delta_i}$ in $\overline{S} \otimes_k R^{\prec \Delta_i}$ is monic with standard set $\Delta_i$. 
  In particular, we get an isomorphism of $R^{\prec \Delta_i}$-modules
  \begin{equation*}
    \overline{S} \otimes_k R^{\prec \Delta_i}/J^{\prec \Delta_i}\cong
    \oplus_{\overline{\beta}\in\Delta_i}R^{\prec \Delta_i}\cdot\overline{x}^{\overline{\beta}} .
  \end{equation*}
  Also remember that $R^{\prime\prec \Delta_i}=R^{\prec \Delta_i}[y_i]$. 
  Therefore, upon writing $R^{\prime\prec \Delta_i}\cdot J^{\prec \Delta_i}$ for the ideal in
  $\overline{S} \otimes_k R^{\prime\prec \Delta_i}$ spanned by $J^{\prec \Delta_i}$, we get isomorphisms
  \begin{equation*}
    \begin{split}
      & \overline{S} \otimes_k R^{\prime\prec \Delta_i}/R^{\prime\prec \Delta_i}\cdot J^{\prec \Delta_i}
      \cong
      R^{\prec \Delta_i}[\overline{x},y_i]/R^{\prec \Delta_i}[y_i]\cdot J^{\prec \Delta_i}\\
      \cong
      &\oplus_{\overline{\beta}\in\Delta_i}R^{\prec \Delta_i}[y_i] \cdot\overline{x}^{\overline{\beta}}
      \cong
      \oplus_{\overline{\beta}\in\Delta_i}R^{\prime\prec \Delta_i}\cdot\overline{x}^{\overline{\beta}} .
    \end{split}
  \end{equation*}
  Remember the definition of the ideal $J^{\prime\prec \Delta_i} \subset S \otimes_k R^{\prime\prec \Delta_i}$
  from Section \ref{hilbtimesa}, from which we get isomorphisms
  \begin{equation*}
    \begin{split}
      &S \otimes_k R^{\prime\prec \Delta_i}/J^{\prime\prec \Delta_i}
      \cong
      R^{\prime\prec \Delta_i}[\overline{x},x_{n}]/(J^{\prec \Delta_i}+(x_{n}-y_i))\\
      \cong
      &\overline{S} \otimes_k R^{\prime\prec \Delta_i}/J^{\prec \Delta_i}
      \cong
      \oplus_{\overline{\beta}\in\Delta_i}R^{\prime\prec \Delta_i}\cdot\overline{x}^{\overline{\beta}} .
    \end{split}
  \end{equation*}
  Thus the quotient $S \otimes_k R^{\prime\prec \Delta_i}/J^{\prime\prec \Delta_i}$
  is a free module over $R^{\prime\prec \Delta_i}$ of rank $\#\Delta_i$. 
  The same line of arguments shows that 
  $S \otimes_k R^{\prime\prec I}/R^{\prime\prec I}\cdot J^{\prime\prec \Delta_i}$ 
  is a free $R^{\prime\prec I}$-module with basis 
  $\{\overline{x}^{\overline{\beta}}: \overline{\beta}\in\Delta_i\}$.
  Its rank, therefore, is also $\#\Delta_i$. 
  
  We now consider the $R^{\prime\prec I}$-module homomorphism 
  \begin{equation*}
    \begin{split}
      \epsilon:S \otimes_k R^{\prime\prec I} & \to
      \oplus_{i \in I}S \otimes_k R^{\prime\prec I}/R^{\prime\prec I}\cdot J^{\prime\prec \Delta_i}\\
      f(x) & \mapsto \bigl( f(x)+R^{\prime\prec I}\cdot J^{\prime\prec \Delta_i} \bigr)_{i \in I} .
    \end{split}
  \end{equation*}
  
  Our first claim concerning $\epsilon$ is that $J^{I} \subset \ker\epsilon$. 
  For proving this we have to show that all generators $\phi^{I}_{\alpha}$ of $J^{I}$ lie in $\ker\epsilon$. 
  This means that for all $i\in I$, 
  the polynomial $\phi^{I}_{\alpha}$ lies in $R^{\prime\prec I}\cdot J^{\prime\prec \Delta_i}$. 
  If $i\in S(\alpha)$, this is trivial, as $\phi^{I}_{\alpha}$ contains the factor $(x_{n}-y_i)$. 
  If $i\in T(\alpha)$, then in particular $T(\alpha) \neq \emptyset$, 
  and we may use the upper line of \eqref{thetanonempty}
  for studying the factor $\theta^{I}_{\alpha}$ of $\phi^{I}_{\alpha}$,
  \begin{equation*}
    \begin{split}
      \theta^{I}_{\alpha} &= 
      \sum_{i^\prime \in T(\alpha)} \chi(T(\alpha),i^\prime) f_{i^\prime,\overline{\alpha}} \\
      &= \chi(T(\alpha),i) f_{i,\overline{\alpha}} 
      + \sum_{i^\prime \in T(\alpha) \setminus \{i\}} \chi(T(\alpha),i^\prime) f_{i^\prime,\overline{\alpha}}
    \end{split}
  \end{equation*}
  The first summand is contains the factor $f_{i,\overline{\alpha}}$ 
  and therefore lies in $J^{\prime\prec \Delta_i}$. 
  Each of the other summands contains the factor $(x_{n}-y_i)$ 
  and therefore also lies in $J^{\prime\prec \Delta_i}$. 
  The first claim is proved. 
  
  Our second claim is that $\epsilon$ is surjective. 
  For this it suffices to take an arbitrary $i\in I$ and to find an element $a \in S \otimes_k R^{\prime\prec I}$ 
  such that $\epsilon(a)=(0,\ldots,0,b,0,\ldots,0)$, where $b$ is an invertible element of 
  $S \otimes_k R^{\prime\prec I}/R^{\prime\prec I}\cdot J^{\prime\prec \Delta_i}$. 
  We take $a := \chi(T(\alpha),i)$. Then if $i^\prime\neq i$, 
  the factor $(x_{n}-y_i)$ appears in $a$, hence $a\in J^{\prime\prec \Delta_i}$, 
  thus the $i^\prime$-th component of $\epsilon(a)$ vanishes. 
  Upon considering $i$, we see that for all $j\in I \setminus \{i\}$, 
  the factor $(x_{n}-y_{j})=(x_{n}-y_i)+(y_i-y_{j})$ appears in $a$. 
  The first summand, $x_{n}-y_i$, 
  vanishes in $S \otimes_k R^{\prime\prec I}/R^{\prime\prec I}\cdot J^{\prime\prec \Delta_i}$, 
  and the second summand, $y_i-y_{j}$, is invertible in that ring. The claim is proved. 
  
  From the second claim we get an isomorphism of $R^{\prime\prec I}$-modules
  \begin{equation*}
    S \otimes_k R^{\prime\prec I}/\ker\epsilon\cong
    \oplus_{i \in I}S \otimes_k R^{\prime\prec I}/R^{\prime\prec I}\cdot J^{\prime\prec \Delta_i} .
  \end{equation*}
  Each direct summand on the right hand side is a free $R^{\prime\prec I}$-module of rank $\#\Delta_i$. 
  Therefore, $S \otimes_k R^{\prime\prec I}/\ker\epsilon$ is a free $R^{\prime\prec I}$-module 
  of rank $\sum_{i \in I}\#\Delta_i=\#\Delta$. 
  From the first claim, together with Corollary \ref{corolla}, 
  we get an inequality of lengths of $R^{\prime\prec I}$-modules,
  \begin{equation}\label{geq}
    \#\Delta={\rm length}\,S \otimes_k R^{\prime\prec I}/\ker\epsilon\leq
    {\rm length}\,S \otimes_k R^{\prime\prec I}/J^{I}\leq\#\Delta .
  \end{equation}
  This is in fact an equality, and by what we said in the first paragraph of the present proof, 
  we have just proved that $\sigma$ factors through $R^{\prec \Delta}$. 
  We denote the induced homomorphism by
  \begin{equation*}
    \widehat{\tau}^*: R^{\prec \Delta} \to R^{\prime\prec \Delta} .
  \end{equation*}
  This defines the claimed morphism of schemes $\widehat{\tau}: \widehat{Y}^\Delta \to \Hi^{\prec \Delta}_{S/k}$. 
  
  It remains to show that the morphism of functors corresponding to the ring homomorphism $\widehat{\tau}^*$ 
  is the same as the morphism $\widetilde{\tau}$ of Definition \ref{deftau}.
  So let $B$ be a $k$-algebra with no nontrivial idempotents. 
  We consider a fixed element $g$ of $h_{\widehat{Y}^\Delta}(B)$. 
  This is just a homomorphism $g:R^{\prime\prec \Delta}\to B$, 
  and by the arguments of Section \ref{alldecompositions}, 
  in fact a homomorphism $g:R^{\prime\prec I}\to B$ for one fixed $I$. 
  (By abuse of language we denote both homomorphisms by the same letter, $g$.)
  We identify $g$ with the tuple of ideals
  \begin{equation*}
    \bigl( \langle ({\rm id} \otimes g^{\prime\prime})(J^{\prime\prec \Delta_i}) \rangle \bigr)_{i \in I}
    =\bigl( \langle J_i \rangle + \langle x_{n}-b_i \rangle \bigr)_{i \in I} ,
  \end{equation*}
  as we did in Section \ref{alldecompositions}. Denote, 
  for the time being, the morphism of functors corresponding to $\widehat{\tau}$ by 
  $h_{\widehat{\tau}}: h_{\widehat{Y}^\Delta} \to \HHi^{\prec \Delta}_{S/k}$. 
  For finishing the proof of the theorem, 
  we have to show that the image of the ideal $J^{\prec \Delta}$ under the composition
  \begin{equation*}
    \xymatrix{
      S \otimes_k R^{\prec \Delta} \ar[r]^{{\rm id} \otimes \widehat{\tau}^*} & 
      S \otimes_k R^{\prime\prec \Delta} \ar[r]^{{\rm id} \otimes g} & S \otimes_k B ,
    }
  \end{equation*}
  satisfies the identity 
  \begin{equation}\label{intersectionidentity}
    \langle (({\rm id} \otimes g)\circ({\rm id} \otimes \widehat{\tau}^*))(J^{\prec \Delta}) \rangle 
    = \cap_{i \in I}( \langle J_i \rangle + \langle x_{n}-b_i \rangle )
  \end{equation}
  since the ideal on the left hand side is $h_{\widehat{\tau}}(B)$ applied to $g$, 
  and the right hand side is $\widetilde{\tau}(B)$ applied to $g$. 
  
  For showing \eqref{intersectionidentity}, we first take another look at \eqref{geq}.
  Equality holds true in that formula, hence $\ker\epsilon=J^{I}$. 
  Therefore, the definition of $\epsilon$ shows that
  $J^{I}=\cap_{i \in I}R^{\prime\prec I}\cdot J^{\prime\prec \Delta_i}$.
  Also, since $\widehat{\tau}^*$ sends each generator $T_{\alpha,\beta}$ of $R^{\prec \Delta}$ 
  to the tuple $(c^{I}_{\alpha,\beta})_{i \in\mathcal{I}}$ in $R^{\prime\prec \Delta}$, 
  the identity of ideals
  $\langle ({\rm id} \otimes \widehat{\tau}^*)(J^{\prec \Delta}) \rangle = (J^{I})_{I\in\mathcal{I}}$
  in $S \otimes_k R^{\prime\prec \Delta}$ follows. 
  Putting things together, we get
  \begin{equation}\label{intersectionuniversal}
    \langle ({\rm id} \otimes \widehat{\tau}^*)(J^{\prec \Delta}) \rangle
    =(\cap_{i \in I}R^{\prime\prec I}\cdot J^{\prime\prec \Delta_i})_{I\in\mathcal{I}} .
  \end{equation}
  This identity is a ``universal form'' of the identity \eqref{intersectionidentity} we wish to prove. 
  
  Indeed, we can rewrite the ideal on the left hand side of \eqref{intersectionidentity} as 
  \begin{equation*}
    \langle ({\rm id} \otimes g)\circ({\rm id} \otimes \widehat{\tau}^*)(J^{\prec \Delta}) \rangle
    = \langle ({\rm id} \otimes g)((J^{I})_{I\in\mathcal{I}}) \rangle .
  \end{equation*}
  Since ${\rm id} \otimes g$ is a nonzero morphism only on the $I$-th factor of $R^{\prime\prec \Delta}$, 
  that ideal equals
  \begin{equation*}
    \langle ({\rm id} \otimes g)(J^{I}) \rangle
    = \langle ({\rm id} \otimes g)(\cap_{i \in I}R^{\prime\prec I}\cdot J^{\prime\prec \Delta_i}) \rangle .
  \end{equation*}
  The latter ideal is clearly contained in the intersection
  \begin{equation*}
    \cap_{i \in I} \langle ({\rm id} \otimes g)(R^{\prime\prec I}\cdot J^{\prime\prec \Delta_i}) \rangle
    =\cap_{i \in I}( \langle J_i \rangle + \langle x_{n}-b_i \rangle ) .
  \end{equation*}
  Thus we have shown that the left hand side of \eqref{intersectionidentity} is contained in its right hand side.  
  For showing also the other inclusion, we look at the ideal $\langle ({\rm id} \otimes g)(J^{I}) \rangle$. 
  By functoriality of reduced Gr\"obner bases with standard set $\Delta$, 
  that ideal is monic with standard set $\Delta$.
  (A reference for functoriality is \cite{strata}, Section 6, Lemma 2; for seeing this directly in our context here,
  one argues as follows: $J^{I}$ is monic with reduced Gr\"obner basis $\psi^{I}_{\alpha}$ as in \eqref{psii}, 
  where $\alpha$ runs through $\mathscr{C}(\Delta)$. 
  This means that the coefficients $c^{I}_{\alpha,\beta}$ of the various $\psi^{I}_{\alpha}$
  satisfy the quadratic equations in the variables $T_{\alpha,\beta}$ 
  which span the ideal $I^{\prec \Delta}$, see \eqref{generators}.
  As $g$ is a ring homomorphism, also the coefficients of the polynomials 
  $\xi^{I}_{\alpha} := ({\rm id} \otimes g)(\psi^{I}_{\alpha})$ satisfy the same equations. 
  These polynomials span the ideal $\langle ({\rm id} \otimes g)(J^{I}) \rangle \subset S \otimes_k B$, 
  which is therefore monic with standard set $\Delta$.)
  Remember that the first claim about $\epsilon$ from earlier in the present proof
  says that for all generators $\phi^{I}_{\alpha}$ of $J^{I}$ and for all $i\in I$, 
  we have $\phi^{I}_{\alpha}\in R^{\prime\prec I}\cdot J^{\prime\prec \Delta_i}$. 
  Therefore, also all $\psi^{I}_{\alpha}$ lie in $R^{\prime\prec I}\cdot J^{\prime\prec \Delta_i}$. 
  Upon applying ${\rm id} \otimes g$, we see that all $\xi^{I}_{\alpha}$ 
  lie in $\langle ({\rm id} \otimes g)(R^{\prime\prec I}\cdot J^{\prime\prec \Delta_i}) \rangle 
  = \langle J_i \rangle + \langle x_{n}-b_i \rangle$, 
  thus $\xi^{I}_{\alpha} \in \cap_{i \in I}( \langle J_i \rangle + \langle x_{n}-b_i \rangle )$. We get a surjection
  \begin{equation*}
    S \otimes_k B/\langle ({\rm id} \otimes g)(J^{I}) \rangle 
    \to S \otimes_k B/\cap_{i \in I}( \langle J_i \rangle + \langle x_{n} - b_i \rangle )
  \end{equation*}
  of $B$-modules. The module on the left hand side has rank $\#\Delta$. 
  By the Chinese Remainder Theorem, the module on the right hand side is isomorphic to
  $\oplus_{i \in I} S \otimes_k B/( \langle J_i \rangle + \langle x_{n}-b_i \rangle )$
  (for this isomorphism we use that $b_i-b_{j}\in B^*$ for $i\neq j$), 
  hence also has rank $\sum_{i \in I}\#\Delta_i=\#\Delta$. 
  Therefore, the above surjection is an isomorphism. This also shows the missing inclusion
  in \eqref{intersectionidentity}, and we are done.
\end{proof}

The following corollary has been proved along the lines of proving Theorem \ref{connectfourthm}. 
As it is interesting in its own right and was announced at the end of the last subsection, we state it separately here. 
Note that for its proof, we used the Chinese Remainder Theorem, 
as we did for proving the uniqueness assertion, i.e., the reduced Gr\"obner basis property, in Corollary 10 of \cite{jpaa}. 

\begin{cor}
  For all $I\in\mathcal{I}$, the ideal $J^{I} \subset S \otimes_k R^{\prime\prec I}$ is monic
  with reduced Gr\"obner basis $\{\psi^{I}_{\alpha}: \alpha\in\mathscr{C}(\Delta)\}$. 
  In particular, the polynomials of Proposition \ref{algorithm} are unique. 
\end{cor}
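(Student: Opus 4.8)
The plan is to derive the corollary from what has already been established inside the proof of Theorem~\ref{connectfourthm}, namely that for each $I\in\mathcal{I}$ the ideal $J^{I}\subset S\otimes_k R^{\prime\prec I}$ is monic with standard set $\Delta$; equivalently, the residue classes of the monomials $x^{\beta}$, $\beta\in\Delta$, form a free $R^{\prime\prec I}$-basis of $S\otimes_k R^{\prime\prec I}/J^{I}$. The only additional ingredient is the uniqueness of the reduced Gr\"obner basis of a monic ideal with a prescribed standard set, recalled in the Introduction following \cite{pauer}, \cite{wibmer}, \cite{strata}.

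First I would invoke that uniqueness: since $J^{I}$ is monic with standard set $\Delta$, it admits a unique reduced Gr\"obner basis $\{g_{\alpha}:\alpha\in\mathscr{C}(\Delta)\}$, each $g_{\alpha}\in J^{I}$ being of the shape $x^{\alpha}+\sum_{\beta\in\Delta,\,\beta\prec\alpha}d_{\alpha,\beta}x^{\beta}$. Next, for $\alpha\in\mathscr{C}(\Delta)$ the polynomial $\psi^{I}_{\alpha}$ produced in Proposition~\ref{algorithm} lies in $J^{I}$, is monic with leading exponent $\alpha$, and by~\eqref{psii} has all non-leading exponents in $\Delta$ with $c^{I}_{\alpha,\beta}=0$ whenever $\alpha\prec\beta$; so $\psi^{I}_{\alpha}$ has exactly the shape required of $g_{\alpha}$. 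Then I would conclude $\psi^{I}_{\alpha}=g_{\alpha}$: the difference $\psi^{I}_{\alpha}-g_{\alpha}$ lies in $J^{I}$ and is an $R^{\prime\prec I}$-linear combination of the monomials $x^{\beta}$, $\beta\in\Delta$, and since those monomials are linearly independent modulo $J^{I}$ the difference vanishes. Hence $\{\psi^{I}_{\alpha}:\alpha\in\mathscr{C}(\Delta)\}$ is the reduced Gr\"obner basis of $J^{I}$.

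For the final assertion, the same rigidity argument handles an arbitrary $\alpha\in\mathbb{N}^n\setminus\Delta$: if $\psi,\psi'\in J^{I}$ are both monic with leading exponent $\alpha$ and all non-leading exponents in $\Delta$, then $\psi-\psi'\in J^{I}$ is supported on $\Delta$, hence zero; therefore the polynomials of Proposition~\ref{algorithm} are unique.

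I do not expect a genuine obstacle here, as the substance lies in Theorem~\ref{connectfourthm}; the one point to treat with care is the appeal to linear independence of $\{x^{\beta}:\beta\in\Delta\}$ modulo $J^{I}$, which is precisely the content of the statement ``$J^{I}$ is monic with standard set $\Delta$'' and which, as the remark following Theorem~\ref{connectfourthm} indicates, ultimately rests on the Chinese Remainder Theorem decomposition (using invertibility of the differences $y_i-y_j$) carried out in that proof.
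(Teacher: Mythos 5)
Your proposal is correct and follows essentially the paper's own route: the corollary is extracted from the proof of Theorem \ref{connectfourthm}, where the Chinese-Remainder-type splitting (via the map $\epsilon$ and invertibility of the $y_i-y_j$) shows that $S\otimes_k R^{\prime\prec I}/J^{I}$ is free with basis $\{x^{\beta}:\beta\in\Delta\}$, i.e.\ $J^{I}$ is monic with standard set $\Delta$. Your explicit rigidity step (a difference of two monic elements of $J^{I}$ with the same leading exponent and non-leading support in $\Delta$ vanishes by that freeness) is exactly the uniqueness argument the paper leaves implicit, so nothing is missing.
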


Remember that the action of the group $G$ on $\widehat{Y}^I$ 
is given by the action of $G$ on the coordinate ring $R^{\prime\prec\Delta}$, as was defined in \eqref{action}. 

\begin{cor}\label{connectfourmultiset}
  The ring homomorphism $\sigma$ factors through $(R^{\prec\Delta})^G$, 
  defining a morphism of schemes
  \begin{equation*}
    \tau: Y^\Delta \to \Hi^{\prec\Delta}_{S/k} .
  \end{equation*}
  The effect of the corresponding morphism of functors $h_{\tau}: h_{Y^\Delta} \to \Hi^{\prec\Delta}_{S/k}$, 
  evaluated at a $k$-algebra $B$ having no nontrivial idempotents, 
  is the same as the effect of $\widetilde{\tau}: h_{\widehat{Y}^\Delta} \to \Hi^{\prec\Delta}_{S/k}$, 
  except that $h_{\tau}$ is defined on sets 
  $\left\lbrace \langle J_i \rangle + \langle x_{n}-b_i \rangle: i \in I \right\rbrace$ 
  rather than on such tuples indexed by $I$. 
\end{cor}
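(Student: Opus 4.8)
The plan is to show that the ring homomorphism $\widehat{\tau}^*\colon R^{\prec\Delta}\to R^{\prime\prec\Delta}$ built in Theorem~\ref{connectfourthm}, which factors $\sigma$, has image contained in the invariant subring $(R^{\prime\prec\Delta})^{G}=\prod_{I\in\mathcal{I}}(R^{\prime\prec I})^{G}$; this subring is exactly the coordinate ring of $Y^{\Delta}$, so corestriction yields $\tau^*\colon R^{\prec\Delta}\to(R^{\prime\prec\Delta})^{G}$ and hence the morphism $\tau\colon Y^{\Delta}\to\Hi^{\prec\Delta}_{S/k}$, fitting into $\widehat{\tau}=\tau\circ\pi$ with $\pi\colon\widehat{Y}^{\Delta}\to Y^{\Delta}$ the quotient morphism. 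Since $\widehat{\tau}^*$ sends $T_{\alpha,\beta}$ to $(c^{I}_{\alpha,\beta})_{I\in\mathcal{I}}$, it suffices to prove, for each fixed decomposition $I=I_{1}\coprod\ldots\coprod I_{m}$ as in~\eqref{ij}, that every coefficient $c^{I}_{\alpha,\beta}\in R^{\prime\prec I}$ is invariant under $G=S_{h_{1}}\times\ldots\times S_{h_{m}}$.

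First I would let $\sigma\in G$ act on $S\otimes_{k}R^{\prime\prec I}$ via its action~\eqref{action} on $R^{\prime\prec I}$ and the identity on $S$. As $\sigma$ permutes $I$ only inside the blocks $I_{j}$, we have $\Delta_{\sigma(i)}=\Delta_{i}$ for all $i$; comparing generators through~\eqref{action} gives $\sigma(f_{i,\overline{\alpha}})=f_{\sigma(i),\overline{\alpha}}$ for the polynomials of~\eqref{fioverline} and $\sigma(x_{n}-y_{i})=x_{n}-y_{\sigma(i)}$, hence $\sigma\bigl(R^{\prime\prec I}\cdot J^{\prime\prec\Delta_{i}}\bigr)=R^{\prime\prec I}\cdot J^{\prime\prec\Delta_{\sigma(i)}}$. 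So $\sigma$ merely permutes the finite family $\{R^{\prime\prec I}\cdot J^{\prime\prec\Delta_{i}}:i\in I\}$ and therefore fixes its intersection. But equality holds in~\eqref{geq} in the proof of Theorem~\ref{connectfourthm}, i.e. $\ker\epsilon=J^{I}$, and unwinding the definition of $\epsilon$ this says precisely $J^{I}=\bigcap_{i\in I}R^{\prime\prec I}\cdot J^{\prime\prec\Delta_{i}}$ (equivalently,~\eqref{intersectionuniversal} read over a single factor). Hence $\sigma(J^{I})=J^{I}$ for every $\sigma\in G$.

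Because $\sigma$ fixes $S$, the polynomial $\sigma(\psi^{I}_{\alpha})$ is again monic with leading exponent $\alpha$ and non-leading exponents in $\Delta$, and it lies in $\sigma(J^{I})=J^{I}$; by the uniqueness statement of the preceding corollary (equivalently, Lemma~1 of~\cite{strata}) this forces $\sigma(\psi^{I}_{\alpha})=\psi^{I}_{\alpha}$, whence $\sigma(c^{I}_{\alpha,\beta})=c^{I}_{\alpha,\beta}$ for every $\beta\in\Delta$. Thus $\widehat{\tau}^*$ factors through $(R^{\prime\prec\Delta})^{G}$ and defines $\tau$. For the functorial assertion I would invoke $\widehat{\tau}=\tau\circ\pi$, so $h_{\widehat{\tau}}=h_{\tau}\circ h_{\pi}$. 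For a $k$-algebra $B$ with no nontrivial idempotents, a point of $h_{Y^{\Delta}}(B)$ is, by Section~\ref{alldecompositions}, a multiset $\{\langle J_{i}\rangle+\langle x_{n}-b_{i}\rangle:i\in I\}$ for a single $I\in\mathcal{I}$, and it is the $h_{\pi}(B)$-image of the family obtained by fixing any ordering of $I$; by Theorem~\ref{connectfourthm}, $h_{\widehat{\tau}}(B)=\widetilde{\tau}(B)$ sends that family to $\bigcap_{i\in I}(\langle J_{i}\rangle+\langle x_{n}-b_{i}\rangle)$, an ideal depending only on the underlying multiset. Therefore $h_{\tau}(B)$ sends the multiset to this intersection, as asserted.

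The step I expect to be the main obstacle is the $G$-invariance of the $c^{I}_{\alpha,\beta}$: everything hinges on extracting the identity $J^{I}=\bigcap_{i\in I}R^{\prime\prec I}\cdot J^{\prime\prec\Delta_{i}}$ from the proof of Theorem~\ref{connectfourthm} and on carefully tracking how the action~\eqref{action} permutes the ideals $J^{\prime\prec\Delta_{i}}$ among themselves; once these are in place, uniqueness of reduced Gr\"obner bases closes the argument without further computation.
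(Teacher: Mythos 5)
Your argument is correct, but it reaches the key point---the $G$-invariance of the coefficients $c^{I}_{\alpha,\beta}$---by a different route than the paper. The paper works at the level of the explicit interpolation formulas: it checks from \eqref{salpha}, \eqref{chi}, \eqref{thetai} and \eqref{phii} that each generator $\phi^{I}_{\alpha}$ is $G$-invariant (using that $S(\alpha)$ and $T(\alpha)$ are stable under $G$ and that $\sigma(c_{i,\overline{\alpha},\overline{\beta}})=c_{\sigma(i),\overline{\alpha},\overline{\beta}}$), and then argues that the reduction process of Proposition \ref{algorithm} produces $G$-invariant $\psi^{I}_{\alpha}$ from $G$-invariant inputs. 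You instead make the ideal itself the invariant object: since the action \eqref{action} sends the generators of $R^{\prime\prec I}\cdot J^{\prime\prec\Delta_{i}}$ to those of $R^{\prime\prec I}\cdot J^{\prime\prec\Delta_{\sigma(i)}}$ (using $\Delta_{\sigma(i)}=\Delta_{i}$ within each block $I_{j}$), the identity $J^{I}=\bigcap_{i\in I}R^{\prime\prec I}\cdot J^{\prime\prec\Delta_{i}}$ established in the proof of Theorem \ref{connectfourthm} (equality in \eqref{geq}, i.e.\ $\ker\epsilon=J^{I}$) gives $\sigma(J^{I})=J^{I}$, and uniqueness of a monic element of a monic ideal with non-leading support in $\Delta$ forces $\sigma(\psi^{I}_{\alpha})=\psi^{I}_{\alpha}$. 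Your route buys a cleaner argument: it bypasses the need to track invariance through the reduction algorithm, a step the paper treats rather briefly, at the modest cost of importing the intersection identity from the theorem's proof (also visible in \eqref{intersectionuniversal}); the paper's route is more elementary in that it only inspects the defining formulas. Your explicit treatment of the functorial statement via $\widehat{\tau}=\tau\circ\pi$ and the multiset description of $h_{Y^{\Delta}}(B)$ on rings without nontrivial idempotents is also sound and fills in what the paper leaves implicit, granting the functor-of-points descriptions of $\widehat{Y}^{I}$ and $Y^{I}$ stated in Sections \ref{mod} and \ref{alldecompositions}, which the corollary itself presupposes.
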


\begin{proof}
  $\widehat{\tau}$ is defined via the ring homomorphism $\widehat{\tau}^*$, 
  which is defined via the coefficients $c^I_{\alpha,\beta}$ of the polynomial $\psi^I_\alpha$ appearing in \eqref{psii}. 
  Therefore it suffices to show that the polynomials $\psi^I_\alpha$ are $G$-invariant. 
  Those polynomials arise from the polynomials $\phi^I_\alpha$, defined in \eqref{phii}, 
  by the process presented in the proof of Proposition \ref{algorithm}. 
  That proof is essentially polynomial reduction by all $\phi^I_\alpha$, for $\alpha \in \mathscr{C}(\Delta)$. 
  Therefore it suffices to show that the polynomials $\phi^I_\alpha$ are $G$-invariant. 
  
  The definition of $\phi^I_\alpha$ uses, in particular, the partition $I = S(\alpha) \coprod T(\alpha)$, 
  where $S(\alpha)$ is defined in \eqref{salpha}. 
  From that definition, we immediately see that any given $i \in I$ lies in $S(\alpha)$ if, and only if, 
  $\sigma(i)$ lies in $S(\alpha)$. 
  Next, consider the polynomial $\theta^I_\alpha$, defined in \eqref{thetai}. 
  We think of that polynomial as being a linear combination of the terms 
  $c_{i, \overline{\alpha}, \overline{\beta}} \overline{x}^{\overline{\beta}}$, with coefficients $\chi(T(\alpha), i)$. 
  From the definition of the characteristic polynomials $\chi(T(\alpha), i)$, together with the fact that 
  any given $i \in I$ lies in $T(\alpha)$ if, and only if, 
  $\sigma(i)$ lies in $T(\alpha)$, we see that 
  \begin{equation*}
    \sigma(\chi(T(\alpha), i)) = \chi(T(\alpha), \sigma(i)) . 
  \end{equation*}
  Moreover, from \eqref{action} we see that each factor $S_{h_j}$ of $G$ 
  acts on the coefficients of the polynomial in question via
  \begin{equation*}
    \sigma(c_{i, \overline{\alpha}, \overline{\beta}}) = c_{\sigma(i), \overline{\alpha}, \overline{\beta}} .
  \end{equation*}
  These two identities, together with \eqref{thetai}, imply that $\theta^I_\alpha$ is $G$-invariant. 
  Finally, \eqref{phii} shows that $\phi^I_\alpha$ is $G$-invariant. 
\end{proof}

A closer examination of $\tau$ immediately reveals that this morphism is injective as a map of topological spaces. 
We shall now prove Theorem \ref{immersion}, whose statement is stronger than injectivity of $\tau$. 


\subsection{Connect Four is an immersion}\label{proofimmersion}

We shall now prove Theorem \ref{immersion}, i.e., show that $\tau$ is an immersion. 
We start by recalling the definition of the {\it Hilbert scheme of $r$ points}, 
more precisely, its corresponding functor, the {\it Hilbert functor of $r$ points}, 
\begin{equation*}
  \begin{split}
    \HHi^{r}_{S/k}:(k{\rm-Alg})&\to({\rm Sets})\\
    B & \mapsto \left\lbrace 
    \begin{array}{c}
      \text{surjective }B\text{-algebra homomorphisms }\\
      \phi: S \otimes_k B \to Q \text{ s.t. }\\
      Q\text{ is a locally free }B\text{-module of rank } r
    \end{array}
    \right\rbrace/\sim .
  \end{split}
\end{equation*}
Here the relation $\sim$ indicates that two surjective $B$-algebra homomorphisms $\phi_{1}: S \otimes_k B \to Q_{1}$ 
and $\phi_{2}: S \otimes_k B \to Q_{2}$ are equivalent if, and only if, 
there exists an isomorphism $\rho:Q_{1}\to Q_{2}$ such that the diagram
\begin{equation*}
  \xymatrix{ 
    S \otimes_k B \ar[rd] \ar[r] & Q_1 \ar[d]^\rho \\ 
    & Q_2 .	
  }
\end{equation*}
commutes. 
It is far from trivial to show that this functor is represented by a scheme 
(see \cite{norge} for a proof). In fact, $\HHi^{r}_{S/k}$ is covered by open subfunctors 
\begin{equation*}
  \begin{split}
    \HHi^{\Delta}_{S/k}:(k{\rm-Alg})&\to({\rm Sets})\\
    B&\mapsto
    \left\lbrace
    \begin{array}{c}
      B\text{-algebra homomorphisms }\phi:S \otimes_k B \to Bx^\Delta\\
      \hspace{.7cm}\text{ s.t. }\phi\circ({\rm id} \otimes \iota)={\rm id}
    \end{array}
    \right\rbrace ,
  \end{split}
\end{equation*}
where we write $Bx^\Delta=\oplus_{\beta\in\Delta}B\cdot x^\beta$, 
and $\iota:Bx^\Delta \to S \otimes_k B$ for the canonical inclusion (see \cite{huibregtse}, \cite{norge} and \cite{strata}). 
Note that $Bx^\Delta$ does not carry a natural structure of a $B$-algebra; 
the functor $\HHi^{\Delta}_{S/k}$ detects all $B$-algebra structures which can be imposed on $Bx^\Delta$
such that they are compatible with the natural $B$-algebra structure on $S \otimes_k B$. 
It turns out that the functor $\HHi^{\Delta}_{S/k}$ is representable by an affine scheme $\Hi^{\Delta}_{S/k}$, 
and that $\HHi^{\prec \Delta}_{S/k}$ is represented by a closed subscheme $\Hi^{\prec \Delta}_{S/k}$ of that scheme. 
We thus obtain an immersion
\begin{equation*}
  \Hi^{\prec \Delta}_{S/k} \hookrightarrow \Hi^r_{S/k}
\end{equation*}
of a locally closed subscheme of $\Hi^r_{S/k}$. 

\begin{proof}[Proof of Theorem \ref{immersion}]
  As the images of the restrictions of $\tau$ to different $Y^{I}$ do not meet each other, 
  it suffices to show that each restriction $\tau\mid_{Y^{I}}:Y^{I}\to\Hi^{\prec \Delta}_{S/k}$ is an immersion. 
  For all $i$, let $r_i := \#\Delta_i$. 
  First we show that for all $i$, the morphism 
  \begin{equation*}
    \tau_i: \Hi^{\prec \Delta_i}_{\overline{S}/k} \times \mathbb{A}^1_k \to \Hi^{r_i}_{S/k} ,
  \end{equation*}
  whose attached transformation of functors $h_{\tau_i}: (k{\rm-Alg})\to({\rm Sets})$ is given, 
  for each $k$-algebra $B$, by the map of sets 
  \begin{equation*}
    \begin{split}
      h_{\tau_i}(B):
      \left\lbrace
      \begin{array}{c}
        \text{pairs of ideals } (J_i, \langle x_{n}-b_i \rangle): \\
        J_i \subset \overline{S} \otimes_k B \text{ is a monic ideal} \\
        \text{ with standard set }\Delta_i, \\
        \text{ and } \langle x_{n}-b_i \rangle \subset S \otimes_k B
      \end{array}
      \right\rbrace
      & \to
      \left\lbrace 
      \begin{array}{c}
      \text{ideals } J \subset S \otimes_k B: \\
      S \otimes_k B/J\text{Ê is locally free} \\
      \text{ of degree }r_i
      \end{array}
      \right\rbrace
      \\
      (J_i, \langle x_{n}-b_i \rangle) & \mapsto \langle J_i \rangle + \langle x_{n}-b_i \rangle .
    \end{split}
  \end{equation*}
  is an immersion. 
  This is nothing but a reprise of rewriting the functor 
  \begin{equation*}
    h_{\Hi^{\prec\Delta_i}_{\overline{S}/k} \times \mathbb{A}^1_k}: (k{\rm-Alg}) \to ({\rm Sets})
  \end{equation*}
  as we did in Section \eqref{hilbtimesa}. Indeed, each pair $(J_i, \langle x_{n}-b_i \rangle)$ 
  appearing in the domain of $h_{\tau_i}(B)$ takes the shape of the pairs in \eqref{1st}, 
  and each ideal $\langle J_i \rangle + \langle x_{n}-b_i \rangle$ appearing in the range of $h_{\tau_i}(B)$ 
  takes the shape of the pairs in \eqref{2nd}. 
  Upon identifying the standard sets $\Delta_i \subset \mathbb{N}^{n-1}$
  and $\Delta_i\times\{0\} \subset \mathbb{N}^n$, 
  we canonically understand the ideal $\langle J_i \rangle + \langle x_{n}-b_i \rangle \subset S \otimes_k B$ 
  to be monic with standard set $\Delta_i\times\{0\}$. 
  Hence a canonical identification 
  \begin{equation*}
    \Hi^{\prec \Delta_i}_{\overline{S}/k}\times\mathbb{A}^1_k = \Hi^{\prec \Delta_i\times\{0\}}_{S/k} . 
  \end{equation*}
  In this sense each ideal $\langle J_i \rangle + \langle x_{n}-b_i \rangle$ appearing in the range of $h_{\tau_i}(B)$
  is just an element of $\Hi^{\prec (\Delta_i \times \{0\})}_{S/k}(B)$. 
  Therefore, $\tau_i$ is just the locally closed immersion 
  \begin{equation*}
    \Hi^{\prec (\Delta_i \times \{0\})}_{S/k} \hookrightarrow \Hi^{r_i}_{S/k}
  \end{equation*}
  we discussed above. 

  Now Lemma \ref{productimmersion} below shows that the morphism $\rho: Y^I \to \Hi^r_{S/k}$, 
  defined by its corresponding transformation of functors, 
  \begin{equation*}
    h_\rho: \left\lbrace (J_i, \langle x_{n}-b_i \rangle): i \in I \right\rbrace 
    \mapsto \cap_{i \in I}( \langle J_i \rangle + \langle x_{n}-b_i \rangle ) ,
  \end{equation*}
  is an immersion. 
  In fact, we apply the cited lemma several times, in the following way: 
  First we decompose the indexing set into $I = I_1 \coprod \ldots \coprod I_m$ as in \eqref{ij}. 
  Remember that for a fixed $I_j$, all elements of the multiset $\{\Delta_i: i \in I_j\}$ agree. 
  We apply Lemma \ref{productimmersion} (b) in the case where 
  \begin{equation*}
    \begin{split}
      Y_i & := \Hi^{\prec (\Delta_i \times \{0\})}_{S/k} , \\
       c_i & := \tau_i, \text{ for } i \in I_j , \text{ and} \\
       U & := \prod_{i \in I_j} \Hi^{\prec (\Delta_i \times \{0\})}_{S/k} \setminus \Lambda_j , \text{ where }\\
      \Lambda_j & := \cup_{i \neq a \in I_j} \mathbb{V}(y_i - y_a) . 
    \end{split}
  \end{equation*}
  The general assumption of the lemma, namely, that none of the schemes $c_i(y_i) \subset \mathbb{A}^n$ meet each other, 
  is satisfied since the diagonal $\Lambda_j$ has been removed from the product. 
  We do the same thing for all parts $I_j$ of $I$, thus obtaining, for all $j \in \{1, \ldots, m\}$, an immersion 
  \begin{equation*}
    \zeta_j: \bigl( (\prod_{i \in I_j} \Hi^{\prec (\Delta_i \times \{0\})}_{S/k}) \setminus \Lambda_j \bigr) / S_{h_j} 
    \hookrightarrow \Hi^{R_j}_{S/k} ,
  \end{equation*}
  where $R_j := \sum_{i \in I_j} \#I_j$. 
  Then we apply Lemma \ref{productimmersion} (a) in the case where 
  \begin{equation*}
    \begin{split}
      Y_j & := \prod_{i \in I_j} \Hi^{\prec (\Delta_i \times \{0\})}_{S/k} \setminus \Lambda_j , \\
       c_j & := \zeta_j, \text{ for } j \in \{1, \ldots, m\}, \text{ and} \\
       U & := \biggl( \prod_{j \in \{1, \ldots, m\}} 
       \Bigl( \bigl( (\prod_{i \in I_j} \Hi^{\prec (\Delta_i \times \{0\})}_{S/k}) \setminus \Lambda_j \bigr) / S_{h_j} \Bigr) \biggr)
       \setminus \widetilde{\Lambda}, \text{ where }\\
      \widetilde{\Lambda} & := \text{image of } \Lambda \text{ under the canonical map}
    \end{split}
  \end{equation*}
  The general assumption of the lemma, namely, that none of the schemes $c_j(y_j) \subset \mathbb{A}^n$ meet each other, 
  is satisfied since all $\Lambda_j$ and $\widetilde{\Lambda}$ have been removed. 
  The particular assumption of part (a) of the lemma is satisfied as 
  $\Delta_i \neq \Delta_a$ for $i \in I_j$, $a \in I_b$, $j \neq b$. 
  Note that the above-defined scheme $U$ is just $Y^I$, 
  so we obtain the desired immersion $\rho: Y^I \hookrightarrow \Hi^r_{S/k}$. 

  Finally, Theorem \ref{connectfourthm} says that 
  the morphism $\rho$ factors through $\Hi^{\prec \Delta}_{S/k} \hookrightarrow \Hi^{r}_{S/k}$, 
  and that the induced morphism $Y^{I}\to\Hi^{\prec \Delta}_{S/k}$ is just $\tau$. 
\end{proof}

\begin{lmm}\label{productimmersion}
  Let $c_i: Y_i \hookrightarrow \Hi^{r_i}_{S/k}$, for $i = 1, \ldots, m$, 
  and $c: U \hookrightarrow \prod_{i = 1}^m Y_i$ be immersions of schemes such that for all $k$-algebras $A$ and  
  for all $A$-valued points $(y_1, \ldots, y_m)$ of $U$, where the $y_i$ are $A$-valued points of $Y_i$, 
  none of the schemes $c_i(y_i) \subset \mathbb{A}^n_A$ meet each other. 
  Let $r := \sum_{i = 1}^m r_i$. 
  Then there exists a morphism $\iota: U \to \Hi^r_{S/k}$ 
  whose attached transformation of functors $(k{\rm-Alg})\to({\rm Sets})$ is given, 
  for each $k$-algebra $A$, by the map of sets 
  \begin{equation*}
    \begin{split}
      \iota(A):U(A) & \to \left\lbrace w \subset \mathbb{A}^n_A \text{ flat of degree } r \text{ over }{\rm Spec}\,A \right\rbrace \\
      (y_1, \ldots, y_m) &\mapsto c_1(y_1) \cup \ldots \cup c_m(y_m) .
    \end{split}
  \end{equation*}  
  Moreover, $\iota$ is, or induces, resp., an immersion under the following hypotheses: 
  \begin{enumerate}
    \item[(a)] Assume that for all $k$-algebras $A$ and for all $A$-valued points 
    $y = (y_1, \ldots, y_m)$ and $z = (z_1, \ldots, z_m)$ of $U$, 
    where the $y_i$ and $z_i$ are $A$-valued points of $Y_i$, we have $c_i(y_i) \neq c_j(z_j)$ unless $y = z$ and $i = j$. 
    Then $\iota$ is an immersion. 
    \item[(b)] Assume that all $Y_i$, and all $c_i$, for $i = 1, \ldots, m$, are the same, 
    and that $U$ is invariant under the symmetric group $S_m$, 
    acting on the full product $\prod_{i = 1}^m Y_i$ by permuting the factors. 
    Then $\iota$ induces a morphism $\overline{\iota}: U/S_m \to \Hi^r_{S/k}$, which is also an immersion. 
  \end{enumerate}
\end{lmm}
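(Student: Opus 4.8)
The plan is to construct $\iota$ by first passing to a union-of-subschemes morphism and then recognizing it as a composite of known immersions. Concretely, I would proceed as follows. First, define the morphism of functors at the set-theoretic level as stated: an $A$-valued point $(y_1,\ldots,y_m)$ of $U$ gives, via the immersions $c_i$, closed subschemes $c_i(y_i)\subset\mathbb{A}^n_A$, each flat of degree $r_i$ over ${\rm Spec}\,A$; since by hypothesis the $c_i(y_i)$ are pairwise non-meeting, their scheme-theoretic union $w:=c_1(y_1)\cup\ldots\cup c_m(y_m)$ is, \'etale-locally on ${\rm Spec}\,A$ and hence globally, a disjoint union, so $w$ is flat of degree $r=\sum r_i$ over ${\rm Spec}\,A$. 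This defines a transformation of functors $U\to\HHi^r_{S/k}$; since $\HHi^r_{S/k}$ is representable by $\Hi^r_{S/k}$ (by \cite{norge}), we obtain the morphism $\iota$. The non-meeting hypothesis is exactly what is needed for the union to be well-behaved (the ideals $I_1,\ldots,I_m$ defining the $c_i(y_i)$ are pairwise comaximal, so $S\otimes_k A/\cap I_i\cong\oplus_i S\otimes_k A/I_i$ by the Chinese Remainder Theorem, giving flatness of the correct degree).

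For part (a), I would show that $\iota$ is a monomorphism of functors and then that it is an immersion. Injectivity on $A$-points follows from the non-meeting hypothesis together with the assumption $f(y_i)\neq f(z_j)$ unless $(y,i)=(z,j)$: from the disjoint union $w$ one recovers the connected components, hence recovers each $c_i(y_i)$ as a flat degree-$r_i$ piece with a prescribed ``location'', hence recovers $y_i$ since $c_i$ is an immersion; distinctness of the components across $i$ is what the hypothesis guarantees. To upgrade a monomorphism to an immersion, I would use that $\Hi^r_{S/k}$ is covered by the affine open subschemes $\Hi^{\Delta'}_{S/k}$ of $\Delta'$-border bases, $\#\Delta'=r$, and that $U$ is of finite type; on a suitable affine chart the map $\iota$ becomes, after restricting, a composite of the immersion $c:U\hookrightarrow\prod Y_i$, the product of the immersions $c_i$, and the ``union morphism'' $\prod_i\Hi^{r_i}_{S/k}\supset(\text{non-meeting locus})\to\Hi^r_{S/k}$. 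So the real content is: the union morphism, defined on the open locus of tuples of pairwise non-meeting subschemes, is an immersion onto its image. I would prove this by the splitting-into-components argument above: its image is the open-and-closed-in-its-closure locus of $\Hi^r_{S/k}$ consisting of subschemes that decompose as a disjoint union with components of degrees $r_1,\ldots,r_m$ situated as prescribed, and on that locus the inverse morphism is given functorially by sending a disjoint union to its ordered tuple of components. That this is a morphism (not merely a bijection) follows because ``taking the connected component of a given degree'' is representable for finite flat families.

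For part (b), the passage to the quotient $U/S_m$ is formal once (a) is understood: the union $c_1(y_1)\cup\ldots\cup c_m(y_m)$ does not depend on the ordering of the $y_i$, so $\iota$ is $S_m$-invariant and factors through the categorical quotient $U/S_m$ (which exists as a scheme by \cite{mumford}, \S12, Theorem 1, since $U$ is quasi-projective and $S_m$ finite). The induced map $\overline{\iota}:U/S_m\to\Hi^r_{S/k}$ is then seen to be an immersion by the same local analysis: on the image locus in $\Hi^r_{S/k}$, a subscheme decomposing as $m$ pairwise non-meeting pieces of equal degree $r_1=\ldots=r_m$ determines the \emph{unordered} tuple of its components, hence a unique point of $U/S_m$, and this inverse is a morphism by representability of the ``multiset of components'' construction. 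The main obstacle, I expect, is exactly this last point: verifying rigorously that ``decompose a finite flat family into its connected components (as an unordered multiset)'' is given by a morphism of schemes onto the relevant locally closed subscheme of $\Hi^r_{S/k}$, i.e. that the image locus is locally closed and the component-extraction map is algebraic rather than merely a bijection of points. This is where I would spend the care, using the fact that idempotents lift along finite \'etale (or more generally finite flat with locally constant rank on the relevant sublocus) algebras and that the relevant sublocus is cut out by the condition that such a system of orthogonal idempotents of the prescribed ranks exists.
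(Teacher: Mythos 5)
Your construction of $\iota$ itself (pairwise non-meeting pieces, Chinese Remainder, flatness of degree $r$, functoriality via the coproduct) is fine and matches the paper. The trouble is in the immersion statements. Your stated reduction, namely that ``the real content is: the union morphism, defined on the open locus of tuples of pairwise non-meeting subschemes, $\prod_i \Hi^{r_i}_{S/k}\supset(\text{non-meeting locus})\to \Hi^{r}_{S/k}$, is an immersion onto its image,'' is false: already for $m=2$, $r_1=r_2=1$ this map is the $2$-to-$1$ quotient of $(\mathbb{A}^n\times\mathbb{A}^n)\setminus\Lambda'$ onto the \'etale locus of $\Hi^2_{S/k}$, and even for $r_1\neq r_2$ one can move a point from one factor to the other without changing the union. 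Injectivity (hence any hope of an immersion) only appears after imposing the constraint coming from $U$ together with hypothesis (a) (resp.\ after passing to $U/S_m$ in case (b)), so the factorization through a ``union immersion'' on the full non-meeting locus cannot carry the argument; the last map in your composite is not an immersion, and a composite with a non-immersion gives you nothing.

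When you then repair this implicitly by restricting to subschemes ``situated as prescribed,'' the two claims you need --- that this image locus is a locally closed subscheme of $\Hi^{r}_{S/k}$ and that extracting the (multi)set of pieces is a morphism, not just a bijection on points --- are exactly the content of the lemma, and you explicitly defer them (``this is where I would spend the care''). Note also that ``connected components'' is not the right notion: the pieces $c_i(y_i)$ need not be connected, and over a non-local base the relevant datum is a splitting of the finite flat algebra into summands of prescribed ranks lying over the $Y_i$, not a decomposition into connected components. The paper does this work by a different, base-change argument: given any $g:{\rm Spec}\,B\to\Hi^{r}_{S/k}$ with algebra $Q$, it forms the ideal $I\subset B$ generated by the coordinates of the products $xy$ for $x,y$ mapping into distinct summands under every test point, so that $Q\otimes_B(B/I)$ splits as $Q_1\oplus\ldots\oplus Q_m$ of ranks $r_i$; it then pulls back each $Y_i$ along the resulting maps ${\rm Spec}\,B/I\to\Hi^{r_i}_{S/k}$, intersects these locally closed subschemes to get the candidate fibered product, and verifies the universal property using the uniqueness of the ordered tuple (case (a)) or of the multiset (case (b)) of pieces. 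Your proposal would need an argument of comparable substance at precisely the point you left open, so as it stands it has a genuine gap.
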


\begin{proof}
  Each map of sets $\iota(A)$ is well-defined, 
  as the hypothesis guarantees that $c_1(y_1) \cup \ldots \cup c_m(y_m) \subset \mathbb{A}^n_A$ 
  is of degree $r$ over ${\rm Spec}\,A$. 
  The collection of maps $i(A)$ is functorial when $A$ runs through all $k$-algebras $A$, 
  as the hypothesis guarantees that $c_1(y_1) \cup \ldots \cup c_m(y_m) \subset \mathbb{A}^n_A$ 
  is the coproduct of $c_1(y_1), \ldots, c_m(y_m)$. 
  Therefore $\iota$ is a well-defined morphism of schemes. 
  
  For showing that $\iota$ and $\overline{\iota}$, resp., are immersions, we give ourselves a cartesian square
  \begin{equation}\label{cartesiana}
    \xymatrix{
      X \ar[d]^h \ar[r]^-j & {\rm Spec}\,B \ar[d]^g \\
      U \ar[r]^-\iota & \Hi^r_{S/k} .
    }
  \end{equation}
  Under hypothesis (a), the problem is to show that the morphism $j$ appearing in \eqref{cartesiana} is an immersion. 
  Under hypothesis (b), we will consider an analogous cartesian diagram with $\iota$ replaced by $\overline{\iota}$. 
  The problem will also be to show that the upper horizontal morphism, 
  which we will denote by $\overline{j}: X/S_m \to {\rm Spec}\,B$, is an immersion. 
  In what follows, we shall construct the sought-for subschemes $X$ and $X/S_m$ of ${\rm Spec}\,B$. 
  
  The first part of our proof is independent of hypotheses (a) and (b). 
  We evaluate diagram \eqref{cartesiana} in an affine test scheme ${\rm Spec}\,A$, obtaining
  \begin{equation}\label{bigdiagram}
    \xymatrix{ 
      {\rm Spec}\,A \ar@/_/[ddr]_b \ar@/^/[drr]^a \ar@{-->}[dr]^-u \\ 
      & X \ar[d]^h \ar[r]^-j & {\rm Spec}\,B \ar[d]^g \\
      & U \ar[r]^-\iota & \Hi^r_{S/k} .	
    }
  \end{equation}
  For all $i$, we denote the composition of $b$ with the projection $U \to Y_i$ by
  $b_i: {\rm Spec}\,A \to Y_i$. 
  Upon composing that morphism with the immersion $c_i: Y_i \hookrightarrow \Hi^{r_i}_{S/k}$, 
  we obtain a collection of $A$-valued points of $\Hi^{r_i}_{S/k}$, 
  that is to say, a collection of closed subschemes $w_i \subset \mathbb{A}^n_A$, for $i = 1, \ldots, m$. 
  The morphism $g$ is a $B$-valued point $w$ of $\Hi^r_{S/k}$. 
  By Yoneda's Lemma, $g$ corresponds to an element of $\HHi^r_{S/k}({\rm Spec}\,B)$, 
  thus to the equivalence class of a surjective $B$-algebra homomorphism $\phi: S \otimes_k B \to Q$
  such that $Q$ is a locally free $B$-module of rank $r$. 
  After localizing $B$ in a suitable finite collection of elements $f_i\in B$
  such that the ideal spanned by all $f_i$ is the unit ideal in $B$, 
  we may assume that $Q$ is free of rank $r$. 
  Let $q_1, \ldots, q_r$ be a basis of $Q$. 
  Similarly, each $w_i$ corresponds to an element of $\HHi^{r_i}_{S/k}({\rm Spec}\,A)$, 
  thus to the equivalence class of a surjective $B$-algebra homomorphism $\phi_i: S \otimes_k A \to P_i$
  such that $P_i$ is a locally free $A$-module of rank $r_i$. 
  Commutativity of \eqref{bigdiagram} means that $Q \otimes_B A = P_1 \oplus \ldots \oplus P_m$. 
  Consider, for all $i$, the composition of the canonical map, ${\rm can}$, 
  with the $i$-th projection, $\pi_i$, 
  \begin{equation*}
    \xymatrix{ 
      p_i: Q \ar[r]^-{\rm can} & Q \otimes_B A \ar[r]^-{\pi_i} & P_i .
    }
  \end{equation*}
  Take $x \in p_i^{-1}(P_i)$, $y \in  p_j^{-1}(P_j)$, for $i \neq j$, 
  then the equality $p_i(x)p_j(y) = 0$ holds in $Q \otimes_B A$. 
  As the sum $\pi_1 + \ldots + \pi_m$ is the identity, 
  that equality means that ${\rm can}(x){\rm can}(y) = 0$ in $Q \otimes_B A$. 
  Let us write the product $xy \in Q$ in terms of our basis, 
  \begin{equation*}
    xy = \sum_{k = 1}^r d(x,y)_k q_k .
  \end{equation*}
  Then each $d(x,y)_k$ is killed when tensoring with $A$. 
  Accordingly, we consider the following ideal in $B$, 
  \begin{equation*}
    I := \left\langle
    \begin{array}{c}
      d(x,y)_a: \\
      {\rm Spec}\,A \text{ runs through all affine schemes as in } \eqref{bigdiagram}, \\
      x \in p_i^{-1}(P_i), y \in  p_j^{-1}(P_j), \text{ where } i \neq j \in \{1, \ldots, m\}, \\
      \text{ and } a \in \{1, \ldots, r\}
    \end{array}
    \right\rangle .
  \end{equation*}
  We obtain a $B/I$-algebra $Q \otimes_B (B/I)$, which is free of rank $r$ as a module over $B/I$. 
  Hence a homomorphism $\overline{\phi}: S \otimes_k (B/I) \to Q$, 
  which corresponds to a morphism $\overline{g}: {\rm Spec}\, B/I \to \Hi^r_{S/k}$.
  Moreover, the algebra $Q \otimes_B (B/I)$ splits as a direct sum,
  \begin{equation*}
    Q \otimes_B (B/I) = Q_1 \oplus \ldots \oplus Q_m ,
  \end{equation*}
  where $Q_i$ is free of rank $r_i$ as a module over $B/I$.
  Hence, for each $i$, a homomorphism $\phi_i: S \otimes_k (B/I) \to Q_i$, 
  which corresponds to a morphism $\overline{g}_i: {\rm Spec}\, (B/I) \to \Hi^{r_i}_{S/k}$. 
  
  For computing the fibered product $X$, we may replace $g$ by $\overline{g}$. 
  Indeed, given any evaluation of \eqref{cartesiana} as in \eqref{bigdiagram}, 
  we may interpret the composition $g \circ a = \iota \circ b$ as an $A$-valued point of $\Hi^r_{S/k}$, 
  and we have seen that $g \circ a$ definitely factors through $\overline{g}$. 
  Therefore we may replace cartesian diagram \eqref{cartesiana} by cartesian diagram 
  \begin{equation}\label{cartesianb}
    \xymatrix{
      X \ar[d] \ar[r]^-j & {\rm Spec}\,B/I \ar[d]^{\overline{g}} \\
      U \ar[r]^-\iota & \Hi^r_{S/k} .
    }
  \end{equation}
  Moreover, we consider, for each $i$, the cartesian diagram 
  \begin{equation*}
    \xymatrix{
      X_i \ar[d]^{h_i} \ar[r]^-{j_i} & {\rm Spec}\,B/I \ar[d]^{\overline{g}_i} \\
      Y_i \ar[r]^-{c_i} & \Hi^{r_i}_{S/k} .
    }
  \end{equation*}
  As $c_i$ is an immersion, so is $j_i$. 
  Let $\overline{a}$ be the composition of morphism $a$ from \eqref{bigdiagram} with the canonical map $B \to B/I$. 
  Then the morphisms $\overline{a}$ and $b$ make the exterior square of diagram
  \begin{equation}\label{bigdiagrami}
    \xymatrix{
      {\rm Spec}\,A \ar@/_/[ddr]_{b_i} \ar@/^/[drr]^{\overline{a}} \ar@{-->}[dr]^{u_i} \\ 
      & X_i \ar[d]^{h_i} \ar[r]^-{j_i} & {\rm Spec}\,B/I \ar[d]^{\overline{g}_i} \\
      & Y_i \ar[r]^-{c_i} & \Hi^{r_i}_{S/k} ,
    }
  \end{equation}
  commute, hence, by the cartesian property, a unique morphism $u_i$ is induced. 
  Commutativity of \eqref{bigdiagram} guarantees that these morphisms are compatible in the sense that 
  $u_i$ also factors through $X_j$, for all $j \neq i$, 
  more precisely, $u_i = u_j$ as morphisms from ${\rm Spec}\, A$ 
  to the scheme-theoretic intersection 
  \begin{equation*}
    X := X_1 \cap \ldots \cap X_m . 
  \end{equation*}
  We obtain a morphism $u: {\rm Spec}\, A \to X$. 
  Moreover, we define $j$ to be the restriction of any $j_i$ to $X$, 
  and $h$ to be $(h_1, \ldots, h_m)$. 
  The range of that last morphism is $U \subset Y_1 \times \ldots \times Y_m$, 
  as is the range of $b = (b_1, \ldots, b_m)$.
  We obtain a commutative diagram of shape \eqref{bigdiagram}. 
  Note that at this point it is not clear that the scheme $X$ 
  we just defined is the fibered product as in \eqref{cartesiana}. 
  
  At this point, assume hypothesis (a) satisfied. 
  We claim that in this case, $X$ really is the fibered product as in \eqref{cartesiana}. 
  What we have to show is the uniqueness of the above-defined morphism $u$. 
  The composition $\overline{g} \circ \overline{a} = \iota \circ b$ is an $A$-valued point of $\Hi^r_{S/k}$, 
  thus a closed subscheme $Z \subset \mathbb{A}^n_A$. 
  Consider all decompositions of $Z$ into closed subschemes, $Z = Z_1 \cup \ldots \cup Z_m$ 
  such that each $Z_i$ corresponds to an $A$-valued point of $\Hi^{r_i}_{S/k}$. 
  Our hypothesis guarantees that the map of sets $\iota(A)$ is injective. 
  Therefore, not only is the above decomposition of $Z$ unique, even every $Z_i$ appearing in it is unique
  (as opposed to unique up to permutations). 
  In other words, the homomorphism $\phi$ obtained from diagram \eqref{bigdiagram} 
  uniquely determines each homomorphism $\phi_i$. 
  Thus each $u_i$ is uniquely determined, and so is $u$. 
  
  Now assume hypothesis (b) satisfied. 
  As $U$ is quasi-projective and $S_m$ is finite, the quotient $U/S_m$ exists as a scheme. 
  For all $\sigma \in S_m$, we denote by $\sigma: U \to U$ the corresponding automorphism of $U$. 
  Then the morphism $\iota$ has the property that $\iota \circ \sigma = \iota$. 
  Therefore there exists a unique morphism $\overline{\iota}: U/S_m \to \Hi^r_{S_k}$ such that 
  $\iota$ is the composition of the canonical map $U \to U/S_m$ and $\overline{\iota}$ 
  (see Definition 0.5 and Proposition 0.1 of \cite{git}, or Section 6.1 of \cite{dolgachevit}). 
  Moreover, $S_m$ acts on the collection of all $X_i$ by permutation, hence an action of $S_m$ on $X$. 
  The quotient $X/S_m$ exists as a scheme, and analogously as above, we have a morphism 
  $\overline{j}: X/S_m \to {\rm Spec}\, B/I$. 
  Let $\overline{b}$ be the composition of $b$ and the canonical map $U \to U/S_m$, 
  and let $\overline{u}$ be the composition of $u$ and the canonical map $X \to X/S_m$. Moreover, 
  the morphism $h = (h_1, \ldots, h_m)$ induces the morphism $\overline{h}$ appearing in the commutative diagram 
  \begin{equation*}
    \xymatrix{ 
      {\rm Spec}\, A \ar@/_/[ddr]_{\overline{b}} \ar@/^/[drr]^{\overline{a}}
      \ar@{-->}[dr]^-{\overline{u}} \\ 
      & X/S_m \ar[d]^{\overline{h}} \ar[r]^{\overline{j}} & {\rm Spec}\, B/I \ar[d]^{\overline{g}} \\ 
      & U/S_m \ar[r]^-{\overline{\iota}} & \Hi^r_{S/k} .
    }
  \end{equation*}
  We show that $\overline{u}$ is unique, 
  as this will prove that $\overline{\iota}$ is an immersion. 
  Consider, as in the last paragraph, 
  all decompositions of $Z$ into closed subschemes, $Z = Z_1 \cup \ldots \cup Z_m$ 
  such that each $Z_i$ corresponds to an $A$-valued point of $\Hi^{r_i}_{S/k}$.
  Under hypothesis (b), the map of sets $\iota(A)$ is not injective. 
  Therefore, we consider the map of multisets induced by $\iota(A)$. 
  It turns out that this map coincides with $\overline{\iota}(A)$, 
  the evaluation of $\overline{\iota}$ at $A$. 
  (Note that the range of $\overline{\iota}(A)$ coincides with the range of $\iota(A)$.)
  Now $\overline{\iota}(A)$ indeed is injective, 
  and therefore, the multiset of all $Z_i$ appearing the above decomposition of $Z$ unique. 
  In other words, the homomorphism $\phi$ obtained from diagram \eqref{bigdiagram} 
  uniquely determines the multiset of homomorphisms $\phi_i$. 
  Thus the multiset of $u_i$ is uniquely determined, and so is $\overline{u}$. 
\end{proof}


\section{The Gr\"obner stratum of reduced points}\label{reducedmoduli}


\subsection{The Gr\"obner functor of reduced points}

The datum of the equivalence class of a surjective $B$-algebra homomorphism $\phi: S \otimes_k B \to Q$ 
is equivalent to the datum of the morphism of affine schemes $p:Z={\rm Spec}\,Q\to{\rm Spec}\,B$. 
Here $Z$ is a closed subscheme of affine space $\mathbb{A}^n_B$. 
Local freeness of $Q$ translates to flatness and surjectivity of $p$. 
Therefore, the Hilbert functor of $r$ points can be reformulated as follows:
\begin{equation*}
  \begin{split}
    \HHi^{r}_{S/k}:(k{\rm-Alg})&\to({\rm Sets})\\
    B&\mapsto
    \left\lbrace
    \begin{array}{c}
      \text{closed subschemes }Z \subset \mathbb{A}^n_B: \\
      p:Z\to{\rm Spec}\,B \text{ is finite flat and surjective} \\
      \text{ of degree }r
    \end{array}
    \right\rbrace . 
  \end{split}
\end{equation*}
Now consider $(\mathbb{A}^n_k)^r$, the $r$-fold product of affine $n$-space over ${\rm Spec}\,k$. 
The symmetric group $S_{r}$ acts on that product by permuting the factors. Let 
$x^{(i)}=(x^{(i)}_{1},\ldots,x^{(i)}_{n})$ denote the coordinates on the $i$-th factor of $(\mathbb{A}^n_k)^r$, 
and let 
\begin{equation*}
  \Lambda^\prime := \cup_{i\neq j\in\{1,\ldots,r\}}\mathbb{V}(x^{(i)}_{1}-x^{(j)}_{1},\ldots,x^{(i)}_{n}-x^{(j)}_{n})
\end{equation*}  
be the large diagonal in $(\mathbb{A}^n_k)^r$. 
Then clearly $S_r$ also acts on $(\mathbb{A}^n_k)^r \setminus \Lambda^\prime$. 
The quotient by that action,
\begin{equation*}
  \Hi^{r,\et}_{S/k} = ((\mathbb{A}^n_k)^r \setminus \Lambda^\prime)/S_{r} ,
\end{equation*}
is a scheme, as $(\mathbb{A}^n_k)^r \setminus \Lambda^\prime$ is quasi-projective and $S_{r}$ is finite. 
Its dimension is $nr$. Moreover, by \cite{bertin}, Section 2.1, Proposition 2.4, this scheme represents the functor
\begin{equation*}
  \begin{split}
    \HHi^{r,\et}_{S/k}:(k{\rm-Alg})&\to({\rm Sets})\\
    B&\mapsto
    \left\lbrace 
    \begin{array}{c}
      \text{ closed subschemes }Z \subset \mathbb{A}^n_k: \\
      p:Z\to{\rm Spec}\, B \text{ is finite \'etale and surjective} \\
      \text{ of degree }r
    \end{array}
    \right\rbrace .
  \end{split}
\end{equation*}
Upon comparing this definition to the reformulation of $\HHi^{r}_{S/k}$ we gave above,
we see that the additional requirement here is unramifiedness of $p$. 
The scheme $\HHi^{r,\et}_{S/k}$ is the {\it \'etale part} of $\HHi^{r}_{S/k}$, 
or the {\it Hilbert scheme of reduced points}. 
It is the moduli space of families of $r$ distinct points in affine space. 

\begin{dfn}
  We define the following two functors $(k{\rm-Alg})\to({\rm Sets})$, 
  \begin{equation*}
    \begin{split}
      \HHi^{\Delta,\et}_{S/k}:B&\mapsto\HHi^{\Delta}_{S/k}(B)\cap\HHi^{r,\et}_{S/k}(B) ,\\
      \HHi^{\prec\Delta,\et}_{S/k}:B&\mapsto\HHi^{\prec\Delta}_{S/k}(B)\cap\HHi^{r,\et}_{S/k}(B) ,
    \end{split}
  \end{equation*}
  and call $\HHi^{\prec \Delta,\et}_{S/k}$ the 
  {\it Gr\"obner stratum of reduced points with standard set $\Delta$}. 
\end{dfn}

Note that functoriality of $\HHi^{\Delta,\et}_{S/k}$ and $\HHi^{\prec\Delta,\et}_{S/k}$ is clear, 
as an intersection of two functors from an arbitrary category to the category of sets is always a functor. 


\subsection{Representability}

\begin{pro}\label{etale}
  $\HHi^{\Delta,\et}_{S/k}$ is an open subfunctor of $\HHi^{\Delta}_{S/k}$, and 
  $\HHi^{\prec\Delta,\et}_{S/k}$ is an open subfunctor of $\HHi^{\prec\Delta}_{S/k}$. 
  In particular, these functors are representable by open subschemes
  $\Hi^{\Delta,\et}_{S/k} \subset \Hi^{\Delta}_{S/k}$ and 
  $\Hi^{\prec\Delta,\et}_{S/k} \subset \Hi^{\prec\Delta}_{S/k}$, resp. 
\end{pro}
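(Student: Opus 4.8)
The plan is to reduce the statement to a standard fact: étaleness (equivalently, unramifiedness, given that flatness and surjectivity already hold for every point of $\Hi^\Delta_{S/k}$) is an open condition on the base. Concretely, recall that $\Hi^\Delta_{S/k}$ carries a universal family $p:Z\to\Hi^\Delta_{S/k}$ with $Z$ a closed subscheme of $\mathbb{A}^n_{\Hi^\Delta_{S/k}}$, finite flat surjective of degree $r$; this is just the restriction of the universal object $U^{\prec\Delta}\to\Hi^{\prec\Delta}_{S/k}$ (in the $\prec$-case) to the border-basis setting, and it is a well-known feature of the border-basis/Hilbert-scheme construction recalled in Section~\ref{proofimmersion}. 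A morphism $\psi:\operatorname{Spec}B\to\Hi^\Delta_{S/k}$ lies in $\HHi^{\Delta,0}_{S/k}(B)$ if and only if the pulled-back family $Z\times_{\Hi^\Delta_{S/k}}\operatorname{Spec}B\to\operatorname{Spec}B$ is étale; since it is already finite and flat, this is equivalent to it being unramified, i.e.\ to the vanishing of the relative module of Kähler differentials $\Omega_{Z\times_{\Hi^\Delta_{S/k}}\operatorname{Spec}B/\operatorname{Spec}B}$.

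First I would form $\Omega:=\Omega_{Z/\Hi^\Delta_{S/k}}$, which, since $p$ is finite flat of degree $r$, is a coherent $\mathcal{O}_Z$-module whose pushforward $p_*\Omega$ is a coherent $\mathcal{O}_{\Hi^\Delta_{S/k}}$-module (indeed a quotient of a free module of rank $rn$, coming from the $n$ differentials $dx_1,\dots,dx_n$). By compatibility of Kähler differentials with base change, for any $\psi:\operatorname{Spec}B\to\Hi^\Delta_{S/k}$ the module $\Omega_{Z_B/\operatorname{Spec}B}$ is the pullback $\psi^*(p_*\Omega)$ up to the identification furnished by finiteness and flatness of $p$; hence $Z_B\to\operatorname{Spec}B$ is unramified exactly when $\psi^*(p_*\Omega)=0$. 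Therefore the locus where the universal family is étale is $\Hi^\Delta_{S/k}\setminus\operatorname{Supp}(p_*\Omega)$, which is open because $p_*\Omega$ is coherent. Call this open subscheme $\Hi^{\Delta,0}_{S/k}$; by construction it represents $\HHi^{\Delta,0}_{S/k}$, and openness of the subfunctor is precisely the statement that for every $\psi$ as above the fibre product $\operatorname{Spec}B\times_{\Hi^\Delta_{S/k}}\Hi^{\Delta,0}_{S/k}$ is an open subscheme of $\operatorname{Spec}B$ (it is the non-vanishing locus of $\psi^*(p_*\Omega)$). The assertion for $\HHi^{\prime\Delta,0}_{S/k}\subset\HHi^{\prime\Delta}_{S/k}$ follows verbatim with $\Hi^{\prime\Delta}_{S/k}$ in place of $\Hi^\Delta_{S/k}$, since it too carries a universal finite flat family of degree $r$; alternatively, $\HHi^{\prime\Delta}_{S/k}$ is an open (even closed-in-open) subfunctor of $\HHi^\Delta_{S/k}$, and intersecting two open subfunctors gives an open subfunctor, so $\HHi^{\prime\Delta,0}_{S/k}=\HHi^{\prime\Delta}_{S/k}\cap\HHi^{\Delta,0}_{S/k}$ is open in $\HHi^{\prime\Delta}_{S/k}$.

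Finally, since $\Hi^\Delta_{S/k}$ (resp.\ $\Hi^{\prime\Delta}_{S/k}$) is representable and an open subfunctor of a representable functor is representable by the corresponding open subscheme, we conclude that $\HHi^{\Delta,0}_{S/k}$ and $\HHi^{\prime\Delta,0}_{S/k}$ are representable by the open subschemes $\Hi^{\Delta,0}_{S/k}\subset\Hi^\Delta_{S/k}$ and $\Hi^{\prime\Delta,0}_{S/k}\subset\Hi^{\prime\Delta}_{S/k}$ described above. The one point that needs a little care—and which I expect to be the main obstacle—is the clean identification of ``étale'' with ``unramified'' for the pulled-back family over an arbitrary, possibly non-reduced or non-noetherian, $k$-algebra $B$: one must know that finite flat plus unramified over $\operatorname{Spec}B$ implies étale in this generality, and that $\Omega$ commutes with the base change $\psi$; both are standard (flatness is preserved by base change automatically, and Kähler differentials always commute with base change), so the argument goes through, but it is worth recording explicitly rather than waving at ``étale is open''.
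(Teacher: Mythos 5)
Your argument is correct, and it rests on the same two pillars as the paper's own proof: since every family parametrized by $\Hi^{\Delta}_{S/k}$ is already finite, flat and surjective of degree $r$, the extra condition defining the subfunctor is unramifiedness, which is detected by the relative differentials $\Omega^1$ and is compatible with base change; and finiteness of the family is what pushes the non-\'etale locus down to a closed subset of the base. The packaging, however, is genuinely different. The paper never passes to the universal family over the representing scheme: for each $\phi\in\HHi^{\Delta}_{S/k}(B)$ it constructs by hand an ideal $I\subset B$ (the reduced ideal of the image, under the finite and hence universally closed map $p$, of the closed non-\'etale locus of $Z\to{\rm Spec}\,B$), and then checks pointwise, using Nakayama for the stalks of $\Omega^1$ and EGA I, 3.4.8 for the comparison of fibers, that a test map ${\rm Spec}\,A\to{\rm Spec}\,B$ yields an \'etale family if, and only if, it lands in ${\rm Spec}\,B\setminus{\rm Spec}\,B/I$. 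You run the argument once, for the universal family $p:Z\to\Hi^{\Delta}_{S/k}$, define $\Hi^{\Delta,0}_{S/k}$ as the complement of ${\rm Supp}(p_{*}\Omega^1)$, and get the functor-of-points verification from the affine identity $\Omega^1_{Z_B/B}\cong\psi^{*}(p_{*}\Omega^1)$ as $B$-modules together with the fact that the support of a finitely generated module is closed and its formation commutes with base change. Your version is a bit more economical (no reduced structures, no pointwise fiber arguments, and the open subscheme is produced in one stroke); the paper's version is more hands-on at the level of test objects and does not need to invoke the universal family over $\Hi^{\Delta}_{S/k}$ at all.

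Two small slips, neither fatal. First, the fibre product ${\rm Spec}\,B\times_{\Hi^{\Delta}_{S/k}}\Hi^{\Delta,0}_{S/k}$ is the locus where $\psi^{*}(p_{*}\Omega^1)$ vanishes, i.e.\ the complement of its support, not its ``non-vanishing locus''. Second, $\HHi^{\prec\Delta}_{S/k}$ is a closed, not an open, subfunctor of $\HHi^{\Delta}_{S/k}$, so your ``alternative'' route for the second assertion should be phrased as: an open subfunctor pulls back to an open subfunctor along any morphism of functors, applied to $\HHi^{\prec\Delta}_{S/k}\to\HHi^{\Delta}_{S/k}$; or simply run your main argument verbatim with the universal family $U^{\prec\Delta}$ over $\Hi^{\prec\Delta}_{S/k}$, whose coordinate ring $S^{\prec\Delta}$ is free of rank $r$ over $R^{\prec\Delta}$, exactly as you propose first.
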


\begin{proof}
  Only the first part of the theorem requires proof, as an open subfunctor of a functor representable
  by a scheme $X$ is always representable by an open subscheme of $X$. 
  
  We first show the assertion for $\HHi^{\Delta,\et}_{S/k}$. 
  We have to show that for all affine schemes ${\rm Spec}\,B$ over ${\rm Spec}\,k$ and all morphisms of functors
  $g:h_{{\rm Spec}\,B}\to\HHi^{\Delta,\et}_{S/k}$, the upper horizontal arrow in the cartesian diagram
  \begin{equation*}
    \xymatrix{ 
      \mathscr{G} \ar[d] \ar[r] & h_{{\rm Spec}\,B} \ar[d]_g \\ 
      \HHi^{\Delta,\et}_{S/k} \ar[r]^-j & \HHi^\Delta_{S/k} .	
    }
  \end{equation*}
  where $j$ is the canonical inclusion of functors, 
  corresponds to the inclusion of an open subscheme of ${\rm Spec}\,B$ into ${\rm Spec}\,B$. 
  This means that there exists an ideal $I \subset B$ such that 
  the morphism of functors $\mathscr{G}\to h_{{\rm Spec}\,B}$ isomorphic to
  $h_{{\rm Spec}\,B \setminus {\rm Spec}\,B/I}\to h_{{\rm Spec}\,B}$. 
  In other words, we have to find an ideal $I$ such that for all affine test schemes ${\rm Spec}\,A$ over ${\rm Spec}\,B$, 
  we have
  \begin{equation*}
    \mathscr{G}({\rm Spec}\,A)=\{a\in h_{{\rm Spec}\,B}({\rm Spec}\,A): \alpha(I)\cdot A=A\} .
  \end{equation*}
  Here the morphism of schemes $a:{\rm Spec}\,A\to{\rm Spec}\,B$ corresponds to the 
  ring homomorphism $\alpha:B\to A$. 

  By Yoneda's Lemma, our given $g$ corresponds to an element $\phi$ of $\HHi^{\Delta}_{S/k}(B)$, 
  i.e., a surjective $B$-algebra homomorphism $\phi: S \otimes_k B \to Q$ such that the composition
  with the canonical inclusion $\iota:Bx^\Delta \to S \otimes_k B$ is an isomorphism of $B$-modules. 
  Therefore 
  \begin{equation*}
    \mathscr{G}({\rm Spec}\,A)=\left\lbrace (a,b)\in 
    h_{{\rm Spec}\,B}({\rm Spec}\,A)\times\HHi^{\Delta,\et}_{S/k}({\rm Spec}\,A): 
    g(a)=j(b) \right\rbrace .
  \end{equation*}
  We make the following identifications regarding the objects appearing in that set:
  \begin{itemize}
    \item From the identification of $a:{\rm Spec}\,A\to{\rm Spec}\,B$ and $\alpha:B\to A$, 
      we get an identification of $g(a)$ and $\phi\otimes\alpha: S \otimes_k B \otimes_{B}A = S \otimes_k A \to Q\otimes_{B}A$.
    \item $b\in\HHi^{\Delta,\et}_{S/k}({\rm Spec}\,A)$ corresponds to a $A$-algebra homomorphism 
      $\beta: S \otimes_k A \to Q^\prime$ such that the composition 
      $Ax^\Delta \to S \otimes_k A \to Q^\prime$ is an isomorphism and 
      $p^\prime:Z^\prime={\rm Spec}\,Q^\prime\to{\rm Spec}\,A$ is \'etale and surjective. 
    \item Therefore, the condition $g(a)=j(b)$ uniquely determines
      $\beta=\phi\otimes\alpha$. In particular, $b$ is uniquely determined. 
    \item The composition $Ax^\Delta \to S \otimes_k A \to Q^\prime$
      of the canonical inclusion with $\beta$ is automatically an isomorphism.
  \end{itemize}
  Therefore 
  \begin{equation*}
      \mathscr{G}({\rm Spec}\,A)=
      \left\lbrace
      \begin{array}{c}
        k\text{-algebra homomorphisms }\alpha:B\to A \text{ s.t. } \\
        \phi\otimes\alpha: S \otimes_k A \to Q\otimes_{B}A\text{ corresponds to \'etale surjective} \\
        p^\prime:Z^\prime={\rm Spec}\,S \otimes_k A/\ker(\phi\otimes\alpha)\to{\rm Spec}\,A
      \end{array}
      \right\rbrace .
  \end{equation*}
  
  We are ready to construct the desired ideal $I \subset B$. 
  The $k$-algebra homomorphism $\pi: B \to S \otimes_k B/\ker\phi$ corresponds to the morphism
  $p:Z={\rm Spec}\,S \otimes_k B/\ker\phi\to{\rm Spec}\,B$ of affine schemes. By SGA 1, Expos\'e I, Corollaire 3.3 \cite{sga1}, 
  the locus in $Z$ where $p$ is unramified is open in $Z$. 
  As by assumption $p$ is flat, that locus is the locus in $Z$ where $p$ is \'etale. 
  We denote its complement by $Y$, a closed subset of $Z$. 
  As the morphism $p$ is finite, it is universally closed (EGA II, 6.1.10 \cite{ega2}), 
  and therefore $p(Y)$ is closed in ${\rm Spec}\,B$. We give this closed set the reduced subscheme structure, 
  thus write it as the closed subscheme $p(Y)={\rm Spec}\,B/I$, for a suitable ideal $I \subset B$. 
    
  From the above description of $\mathscr{G}({\rm Spec}\,A)$, we get, 
  for each morphism $p^\prime:Z^\prime\to{\rm Spec}\,A$ appearing in that description, a cartesian diagram
  \begin{equation}\label{surjective}
    \xymatrix{ 
      Z^\prime \ar[d]_{p^\prime} \ar[r]^r & Z \ar[d]^p \\ 
      {\rm Spec}\,A \ar[r]^a & {\rm Spec}\,B .	
    }
  \end{equation}
  We claim that $p^\prime$ is \'etale and surjective if, and only if, 
  $a({\rm Spec}\,A)$ is contained in ${\rm Spec}\,B \setminus {\rm Spec}\,B/I$. 
  As the latter assertion is equivalent to $A=A\cdot\alpha(I)$, a proof of the claim will finish the proof of 
  the assertion for $\HHi^{\Delta,\et}_{S/k}$ of the proposition.
  Now $p^\prime$ is unramified (hence \'etale) in a point $x^\prime\in Z^\prime$ if, 
  and only if, $\Omega^1_{Z^\prime/{\rm Spec}\,A}(x^\prime)=0$, 
  where $\Omega^1_{Z^\prime/{\rm Spec}\,A}$ is the sheaf of differentials 
  (see SGA 1, Expos\'e I, \S1 \cite{sga1}). 
  By the value of that sheaf in $x^\prime$, we are referring to its stalk. 
  However, by Nakayama's Lemma, that stalk is zero if, and only if, 
  its tensor product with the residue field in $x^\prime$ is zero. We may therefore identify
  \begin{equation*}  
    \Omega^1_{Z^\prime/{\rm Spec}\,A}(x^\prime)
    =\Omega^1_{Z^\prime/{\rm Spec}\,A}\otimes_{\mathcal{O}_{Z}}\kappa(x^\prime) .
  \end{equation*}
  In the cited section of SGA 1, it is also shown that $\Omega^1_{Z^\prime/{\rm Spec}\,A}$ 
  is well behaved under base change, i.e., 
  $\Omega^1_{Z^\prime/{\rm Spec}\,A}=r^*\Omega^1_{Z/{\rm Spec}\,B}$. 
  Therefore, upon writing $x=r(x^\prime)$, we get
  \begin{equation*}  
    \Omega^1_{Z^\prime/{\rm Spec}\,A}(x^\prime)
    =(r^*\Omega^1_{Z/{\rm Spec}\,B})(x^\prime)
    =\Omega^1_{Z/{\rm Spec}\,B}(x)\otimes_{\kappa(x)}\kappa(x^\prime) .
  \end{equation*}
  As the homomorphism $\kappa(x)\to\kappa(x^\prime)$ is just a field extension, 
  this equation shows that the $\kappa(x^\prime)$-vector space 
  $\Omega^1_{Z^\prime/{\rm Spec}\,A}(x^\prime)$ is zero if, and only if, the $\kappa(x)$-vector space 
  $\Omega^1_{Z/{\rm Spec}\,B}(x)$ is zero. 
  In other words, $p^\prime$ is unramified (hence \'etale) in $x^\prime$ if, and only if, 
  $p$ is unramified (hence \'etale) in $x$. 
  
  Therefore, the following statements are equivalent. 
  \begin{itemize}
    \item $p^\prime$ is \'etale;
    \item for all $x^\prime\in Z^\prime$, $p^\prime$ is \'etale in $x^\prime$;
    \item for all $x^\prime\in Z^\prime$, the point $x=r(x^\prime)$ does not lie in $p^{-1}({\rm Spec}\,B/I)$;
    \item $p\circ r(Z^\prime) \subset {\rm Spec}\,B \setminus {\rm Spec}\,B/I$;
    \item $a\circ p^\prime(Z^\prime) \subset {\rm Spec}\,B \setminus {\rm Spec}\,B/I$; and
    \item $a({\rm Spec}\,A) \subset {\rm Spec}\,B \setminus {\rm Spec}\,B/I$. 
  \end{itemize}
  (For the transition between the second and the third bulleted items, 
  we use the fact that a point $x$ in the fiber of $p$ over some $y\in{\rm Spec}\,B$ lies in the image of $r$ if, 
  and only if, all points in the fiber of $p$ over $y$ lie in the image of $r$. 
  This follows from EGA I, 3.4.8 \cite{ega1}.)
  The proof of the assertion for $\HHi^{\Delta,\et}_{S/k}$ of the proposition is complete.
  
  The proof for $\HHi^{\prec \Delta,\et}_{S/k}$ follows the same line of argument. 
  We have to show that the upper horizontal arrow in the cartesian diagram
  \begin{equation*}
    \xymatrix{ 
      \mathscr{G}^\prime \ar[d] \ar[r] & h_{{\rm Spec}\,B} \ar[d]^g \\ 
      \HHi^{\prec \Delta,\et}_{S/k} \ar[r]^j & \HHi^{\prec \Delta}_{S/k} .
    }
  \end{equation*}
  corresponds to the inclusion of an open subscheme of ${\rm Spec}\,B$ in ${\rm Spec}\,B$. 
  Analogously as above, we can rewrite the fibered product as
  \begin{equation*}
    \mathscr{G}^\prime({\rm Spec}\,A) = 
    \left\lbrace
    \begin{array}{c}
      k\text{-algebra homomorphisms }\alpha:B\to A\text{ s.t. } \\
      \phi \otimes \alpha: S \otimes_k A \to Q\otimes_{B}A\text{ corresponds to \'etale surjective}\\
      p^\prime:Z^\prime={\rm Spec}\,S \otimes_k A/\ker(\phi\otimes\alpha)\to{\rm Spec}\,A\text{, where}\\
      \ker(\phi\otimes\alpha)\text{ is monic with standard set }\Delta
    \end{array}
    \right\rbrace .
  \end{equation*}
  The above proof of openness of the morphism $\mathscr{G}\to h_{{\rm Spec}\,B}$ 
  goes through also for the morphism $\mathscr{G}^\prime\to h_{{\rm Spec}\,B}$.
\end{proof}

Note that in the above proof we did not mention surjectivity of $p^\prime$ in \eqref{surjective}, 
implicitly taking it for granted. 
Indeed, the morphism $p$ at the right hand side of \eqref{surjective} 
corresponds to an element of $\HHi^{\Delta,\et}_{S/k}(B)$, 
the image of which under the map $\HHi^{\Delta,\et}_{S/k}(\beta)$ corresponds to $p^\prime$. 
Therefore $p^\prime$ lies in $\HHi^{\Delta,\et}_{S/k}(A)$ and is thus surjective by definition. 
However, the reason for surjectivity of $p^\prime$, which is necessary for functoriality of $\HHi^{\Delta,\et}_{S/k}$, 
is found in EGA I, 3.4.8, the same reference we cited in the above proof. 


\section{The Connect Four morphism on reduced points}\label{connectonreduced}


\subsection{Restriction to the Gr\"obner stratum of reduced points}\label{restriction}

Now we restrict the C4 morphism $\tau:Y^{\Delta}\to\Hi^{\prec \Delta}_{S/k}$
of Corollary \ref{connectfourmultiset} to that open subscheme of $Y^{\Delta}$ 
which parametrizes \'etale rather than flat families. 
More precisely we consider, for each C4 decomposition $\{\Delta_i: i\in I\}$ of $\Delta$, the open subscheme 
\begin{equation*}
  Y^{I,\et} := \Bigl( \prod_{i \in I}\Hi^{\prec \Delta_i,\et}_{\overline{S}/k}\times\mathbb{A}^1_k \setminus \Lambda \Bigr) / G
\end{equation*}
of $Y^{I}$, where the {\it large diagonal}, $\Lambda$, is defined as in \eqref{large}; 
and the disjoint sum over all these schemes, 
\begin{equation*}
  Y^{\Delta,\et} := \coprod_{I\in\mathcal{I}}Y^{I,\et} .
\end{equation*}

\begin{pro}\label{restriction0}
  The restriction of the C4 morphism to $Y^{\Delta,\et}$ factors through 
  $\Hi^{\prec \Delta,\et}_{S/k}$, thus defining a morphism 
  \begin{equation*}
    \tau^\et:Y^{\Delta,\et}\to\Hi^{\prec \Delta,\et}_{S/k} .
  \end{equation*}
\end{pro}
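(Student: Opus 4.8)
The plan is to combine two facts: first, that the Connect Four morphism $\tau$ of Corollary \ref{connectfourmultiset} is already defined on all of $Y^\Delta$ with the functorial description given by $\widetilde{\tau}$; and second, that $\HHi^{\prec\Delta,0}_{S/k}$ is an open subfunctor of $\HHi^{\prec\Delta}_{S/k}$ by Proposition \ref{etale}. Because $\Hi^{\prec\Delta,0}_{S/k}$ is an open subscheme of $\Hi^{\prec\Delta}_{S/k}$, it suffices to show that $\tau$ maps the open subscheme $Y^{\Delta,0}$ into this open subscheme; equivalently, at the level of functors, that for every $k$-algebra $B$ the composition $\widetilde{\tau}(B)$ sends the $B$-points of $Y^{\Delta,0}$ into $\HHi^{\prec\Delta,0}_{S/k}(B)$. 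Since openness is a local condition, one may reduce to testing on $B$ with no nontrivial idempotents (using the argument of Section \ref{alldecompositions} that decomposes such a $B$-point into a single summand $Y^{I,0}$), where the explicit description of $\widetilde{\tau}$ is available.

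So first I would fix $I\in\mathcal{I}$ and a $B$-point of $Y^{I,0}$, i.e.\ a multiset of ideals $\langle J_i\rangle + \langle x_n - b_i\rangle$ with $J_i\subset\overline{S}\otimes_k B$ monic with standard set $\Delta_i$ and $b_i - b_j\in B^*$ for $i\neq j$, subject now to the additional requirement that each $\overline{S}\otimes_k B/J_i$ be \'etale over $B$. I would note that $S\otimes_k B/(\langle J_i\rangle + \langle x_n - b_i\rangle) \cong \overline{S}\otimes_k B/J_i$ as $B$-algebras (the generator $x_n - b_i$ just eliminates the last variable), so each summand $\mathrm{Spec}\bigl(S\otimes_k B/(\langle J_i\rangle + \langle x_n - b_i\rangle)\bigr)\to\mathrm{Spec}\,B$ is finite \'etale and surjective of degree $\#\Delta_i$. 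Then, exactly as in the proof of Theorem \ref{connectfourthm}, the Chinese Remainder Theorem (valid because $b_i - b_j\in B^*$) gives
\begin{equation*}
  S\otimes_k B\big/\textstyle\bigcap_{i\in I}\bigl(\langle J_i\rangle + \langle x_n - b_i\rangle\bigr)
  \;\cong\; \bigoplus_{i\in I} S\otimes_k B\big/\bigl(\langle J_i\rangle + \langle x_n - b_i\rangle\bigr),
\end{equation*}
a finite direct sum of finite \'etale $B$-algebras, hence itself finite \'etale over $B$ of degree $\sum_{i\in I}\#\Delta_i = \#\Delta$. Thus $\widetilde{\tau}(B)$ applied to our point is an ideal whose quotient is finite \'etale surjective of degree $\#\Delta$ and, by Theorem \ref{connectfourthm}, monic with standard set $\Delta$; that is, it lies in $\HHi^{\prec\Delta,0}_{S/k}(B)$.

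The remaining point is purely formal: having shown $\tau(Y^{\Delta,0})\subseteq\Hi^{\prec\Delta,0}_{S/k}$ on points, I would invoke that $\Hi^{\prec\Delta,0}_{S/k}\hookrightarrow\Hi^{\prec\Delta}_{S/k}$ is an open immersion, so a morphism into $\Hi^{\prec\Delta}_{S/k}$ whose set-theoretic image lands in the open subscheme factors uniquely (as schemes) through it; this defines $\tau^0$, and its functor of points is the claimed restriction of $\widetilde{\tau}$. I expect the only mild subtlety to be the reduction to idempotent-free $B$ and the bookkeeping that the factorization through a single $Y^{I,0}$ is compatible with the \'etale condition on each factor; the \'etaleness transfer itself is immediate from the isomorphism $S\otimes_k B/(\langle J_i\rangle + \langle x_n - b_i\rangle)\cong\overline{S}\otimes_k B/J_i$ and the stability of finite \'etale morphisms under finite coproducts, so there is no real obstacle here — the work was already done in Theorem \ref{connectfourthm} and Proposition \ref{etale}.
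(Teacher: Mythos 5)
Your proposal is correct and follows essentially the same route as the paper: reduce to a $B$-point of a single $Y^{I,0}$ for idempotent-free $B$, observe that each $S\otimes_k B/(\langle J_i\rangle+\langle x_n-b_i\rangle)$ is finite \'etale over $B$, apply the Chinese Remainder Theorem (using $b_i-b_j\in B^*$) to see the quotient by the intersection is a finite direct sum of \'etale algebras and hence \'etale, and conclude via the openness of $\Hi^{\prec\Delta,0}_{S/k}$ from Proposition \ref{etale}. The only difference is that you spell out the identification $S\otimes_k B/(\langle J_i\rangle+\langle x_n-b_i\rangle)\cong\overline{S}\otimes_k B/J_i$ and the formal factorization through the open subscheme, which the paper leaves implicit.
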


\begin{proof}
  We use the description of $\tau$ by its corresponding morphism of functors $h_{\tau}$, 
  see Corollary \ref{connectfourmultiset}.
  Let $B$ be a $k$-algebra with no nontrivial idempotents, and let $\phi$ be an element of $Y^{\Delta,\et}(B)$,
  given by a set of ideals $\left\lbrace \langle J_i \rangle + \langle x_{n}-b_i \rangle: i \in I \right\rbrace$ 
  as in Section \ref{alldecompositions}, 
  for one C4 decomposition $\{\Delta_i: i\in I\}$ of $\Delta$. 
  To say that this set lies in $Y^{\Delta,\et}(B)$ means that, in addition to the constraints on 
  the ideals $J_i \subset \overline{S} \otimes_k B$ and the differences $b_i-b_{j}$ we discussed in Sections 
  \ref{minusbadpoints} and \ref{alldecompositions}, each morphism
  \begin{equation*}
    {\rm Spec}\,S \otimes_k B/(\langle J_i \rangle + \langle x_{n}-b_i \rangle )\to{\rm Spec}\,B
  \end{equation*}
  is also \'etale. By the Chinese Remainder Theorem, we have an isomorphism of $k$-algebras
  \begin{equation*}
    S \otimes_k B/\cap_{i \in I} \bigl( \langle J_i \rangle + \langle x_{n}-b_i \rangle \bigr) \cong 
    \oplus_{i \in I}S \otimes_k B / \bigl( \langle J_i \rangle + \langle x_{n}-b_i \rangle \bigr) ,
  \end{equation*}
  hence an isomorphism of schemes
  \begin{equation*}
    {\rm Spec}\,S \otimes_k B/\cap_{i \in I} \bigl( \langle J_i \rangle + \langle x_{n}-b_i \rangle \bigr) \cong
    \coprod_{i \in I}{\rm Spec}\,S \otimes_k B / \bigl( \langle J_i \rangle + \langle x_{n}-b_i \rangle \bigr) .
  \end{equation*}
  Therefore, also the morphism 
  \begin{equation*}
    {\rm Spec}\,S \otimes_k B/\cap_{i \in I} \bigl( \langle J_i \rangle + \langle x_{n}-b_i \rangle \bigr)\to{\rm Spec}\,B
  \end{equation*}
  is \'etale. This means that the image of $\phi$ under $h_{\tau}(B)$ lies in $\Hi^{\prec \Delta,\et}_{S/k}$.
\end{proof}


\subsection{Galois descent}

Proposition \ref{descent} below will be the key to proving our main theorem. 
As a preparation, we need the following lemma, which is in fact a corollary to the main theorem of \cite{jpaa}. 

\begin{lmm}\label{invariance}
  Let $F$ be a field, $\overline{F}$ its algebraic or separable closure and $A \subset \overline{F}^n$ a finite set. 
  For $\lambda \in \overline{F}$, we write $A_{\lambda} := A\cap\{x_{n} = \lambda\}$. 
  Let $D(A)\in\mathcal{D}_{n}$ be the standard set (w.r.t. $\prec$) 
  of the ideal in $S \otimes_k \overline{F}$ defining $A$. 
  Let $D(A_{\lambda})\in\mathcal{D}_{n-1}$ be the standard set (w.r.t. $\prec$) 
  of the ideal in $\overline{S} \otimes_k \overline{F}$ defining $A_{\lambda}$, 
  where we view the latter set as a subset of $\overline{F}^{n-1}$. 
  Then for all $\sigma\in{\rm Gal}(\overline{F}/F)$, the following equalities hold,
  \begin{enumerate}
    \item[(i)] $D(A)=D(\sigma A)$;
    \item[(ii)] $D(A_{\lambda})=D((\sigma A)_{(\sigma\lambda)})$.
  \end{enumerate}
\end{lmm}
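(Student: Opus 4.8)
The plan is to reduce both equalities to the statement that forming the lexicographic standard set of a finite set of closed points commutes with the Galois action, and then to feed this through the additivity theorem of \cite{jpaa}. First I would prove (i). The Galois group $\mathrm{Gal}(\overline{F}/F)$ acts on $\overline{F}^n$ coordinatewise, hence on finite subsets $A$, and this action is induced by a ring automorphism $\sigma$ of $S\otimes_k\overline{F}$ which fixes $S$ pointwise and acts on the coefficient field $\overline{F}$. If $I(A)\subset S\otimes_k\overline{F}$ is the vanishing ideal of $A$, then $\sigma(I(A))=I(\sigma A)$, because a polynomial $f$ vanishes on $A$ if and only if $\sigma(f)$ vanishes on $\sigma A$ (here I use that $A$ is a finite set of closed $\overline{F}$-rational points, so vanishing can be checked pointwise). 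Since $\sigma$ fixes all monomials $x^\beta$ and only rescales coefficients, it sends a reduced Gr\"obner basis of $I(A)$ with standard set $\Delta$ to a collection of polynomials of exactly the same shape \eqref{fioverline}-type, i.e.\ monic with the same leading exponents and non-leading exponents in $\Delta$; by uniqueness of the reduced Gr\"obner basis this collection is the reduced Gr\"obner basis of $\sigma(I(A))=I(\sigma A)$. Hence $D(\sigma A)=\Delta=D(A)$, which is (i).

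Next I would prove (ii) by the same argument one dimension down, being careful about the slicing. Fix $\lambda\in\overline{F}$ and $\sigma\in\mathrm{Gal}(\overline{F}/F)$. The key combinatorial observation is that $\sigma$ maps the hyperplane slice $A_\lambda=A\cap\{x_n=\lambda\}$ bijectively onto $(\sigma A)_{\sigma\lambda}=(\sigma A)\cap\{x_n=\sigma\lambda\}$: indeed a point $a=(a_1,\dots,a_n)\in A$ has $a_n=\lambda$ iff $\sigma a$ has last coordinate $\sigma\lambda$. Viewing $A_\lambda$ as a subset of $\overline{F}^{n-1}$ via the first $n-1$ coordinates, $\sigma$ restricts to the Galois action on $\overline{F}^{n-1}$ and carries $A_\lambda\subset\overline{F}^{n-1}$ to $(\sigma A)_{\sigma\lambda}\subset\overline{F}^{n-1}$. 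Applying part (i) in dimension $n-1$ (with the polynomial ring $\overline{S}\otimes_k\overline{F}$ in place of $S\otimes_k\overline{F}$) gives $D(A_\lambda)=D((\sigma A)_{\sigma\lambda})$, which is (ii).

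I should remark that part (ii) is logically self-contained: it follows from (i) applied to the finite set $A_\lambda\subset\overline{F}^{n-1}$ and does not require the additivity theorem. The additivity theorem of \cite{jpaa} (recalled as fact (iv) in Section \ref{combinatorics}), namely $D(A)=\sum_{\lambda\in\overline{F}}D(A_\lambda)$, is what makes the lemma a genuine ``corollary'' in the sense that (i) and (ii) together show the two decompositions $\{D(A_\lambda)\}_{\lambda}$ and $\{D((\sigma A)_{\mu})\}_{\mu}$ of the same standard set $D(A)=D(\sigma A)$ are identified by the bijection $\lambda\mapsto\sigma\lambda$ on the indexing set; but strictly speaking the two displayed equalities are proved directly as above. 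I expect the main obstacle to be purely a matter of bookkeeping: making precise the interplay between the automorphism $\sigma$ of the coefficient ring, the identification of $A_\lambda$ with a subset of $\overline{F}^{n-1}$, and the requirement in the hypothesis that $A$ consist of $\overline{F}$-rational closed points so that ideal/point-set duality behaves well. The one point deserving a sentence of care is the claim that $\sigma$ preserves the ``reduced Gr\"obner basis shape'': this is where one uses that $\sigma$ acts $S$-linearly and only permutes/rescales the coefficient field, together with the uniqueness statement for reduced Gr\"obner bases with a given standard set (Lemma~1 of \cite{strata}).
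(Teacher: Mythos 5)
Your proof is correct, but it follows a different route than the paper. The paper treats the lemma as a corollary of the main (additivity) theorem of \cite{jpaa}: it proves (i) by induction over $n$, with the trivial base case $n=1$ (where $D(A)=\{0,\ldots,\#A-1\}$ depends only on $\#A$), and with the induction step
$D(\sigma A)=\sum_{\lambda}D((\sigma A)_{\sigma\lambda})=\sum_{\lambda}D(A_{\lambda})=D(A)$,
where the middle equality is (ii) in dimension $n$, itself deduced from (i) in dimension $n-1$ by exactly the slicing observation you make, namely $(\sigma A)_{\sigma\lambda}=\sigma(A_{\lambda})$ inside $\overline{F}^{n-1}$. (The paper even records the converse implication, embedding a set $A\subset\overline{F}^{n-1}$ at height $\lambda\in F$ to get (i) for $n-1$ from (ii) for $n$, though only one direction is used.) You instead prove (i) directly: the coefficient-wise Galois action is a ring automorphism of $S\otimes_k\overline{F}$ fixing the monomials, it carries $I(A)$ to $I(\sigma A)$ because $A$ consists of $\overline{F}$-rational points, and it preserves the shape of the reduced Gr\"obner basis, so $D(\sigma A)=D(A)$ with no induction and no appeal to the Connect Four theorem; your derivation of (ii) from (i) one dimension down coincides with the paper's. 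Both arguments are valid. Yours is more self-contained and makes precise your (correct) remark that the additivity theorem is not logically needed, whereas the paper's argument avoids any discussion of how $\sigma$ interacts with Gr\"obner bases by outsourcing all the algebra to \cite{jpaa}. The only point in your write-up worth tightening is the appeal to uniqueness of the reduced Gr\"obner basis: to invoke it for $I(\sigma A)$ you should first note that $\sigma$ and $\sigma^{-1}$ preserve leading exponents (they fix monomials and send nonzero coefficients to nonzero coefficients), so the set of leading exponents of $\sigma(I(A))=I(\sigma A)$ equals that of $I(A)$; with that observation the conclusion $D(\sigma A)=\Delta$ is immediate.
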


\begin{proof}
  Assertion (i) holds for $n-1$ (more precisely, for all $A \subset \overline{F}^{n-1}$) if, and only if,
  assertion (ii) holds for $n$. 
  Indeed, take $A \subset \overline{F}^{n-1}$ and assume that (ii) is proven for $n$. 
  We embed $A$ into $\overline{F}^n$ by sending each $a\in A$ to $(a,\lambda)$, 
  where $\lambda$ is a fixed element of $F$ and call the result $\widehat{A}_\lambda$. 
  Under the usual identification, we get $D(A)=D(\widehat{A}_{\lambda})$. 
  By (ii), that set equals $D((\sigma\widehat{A})_{(\sigma\lambda)})$. 
  As $\lambda$ lies in $F$, we have $(\sigma\widehat{A})_{\sigma\lambda}=(\sigma\widehat{A})_{\lambda}$,
  hence $D((\sigma\widehat{A})_{\sigma\lambda})=D((\sigma\widehat{A})_{\lambda})=D(\sigma A)$.
  Conversely, take $A \subset \overline{F}^{n}$ and assume that (i) is proven for $n-1$.
  The intersection $(\sigma A)_{\sigma\lambda}$ consist of all $\sigma a=(\sigma a_{1},\ldots,\sigma a_{n})$
  such that $\sigma a_{n}=\sigma\lambda$. The latter condition is equivalent to $a_{n}=\lambda$. 
  Therefore, $(\sigma A)_{\sigma\lambda}$ is just the transform under $\sigma$ of $A_{\lambda}$. 
  Upon understanding $A_{\lambda}$ to be a subset of $\overline{F}^{n-1}$, 
  we may apply (i) and get $D(A_{\lambda})=D((\sigma A)_{(\sigma\lambda)})$. 
  
  We show assertion (i) by induction over $n$. The case $n=1$ is trivial, 
  as $D(A)=\{0,\ldots,\#A-1\}=\{0,\ldots,\#(\sigma A)-1\}$. 
  For $n>1$, the main theorem of \cite{jpaa} tells us that $D(A)=\sum_{\lambda\in\overline{F}}D(A_{\lambda})$. 
  As we may assume that (ii) holds for $n$, we get
  $D(\sigma A)=\sum_{\lambda\in\overline{F}}D((\sigma A)_{(\sigma\lambda)})
  =\sum_{\lambda\in\overline{F}}D(A_{\lambda})=D(A)$.
\end{proof}

\begin{pro}\label{descent}
  The morphism $\tau^\et:Y^{\Delta,\et}\to\Hi^{\prec \Delta,\et}_{S/k}$ is bijective on closed points.
\end{pro}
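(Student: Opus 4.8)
The plan is to use that $\tau^{0}$ is an immersion and then, for each closed point of $\Hi^{\prec\Delta,0}_{S/k}$, to produce a closed preimage by the slicing construction of \cite{jpaa}. First, $\tau^{0}$ is the restriction of the Connect Four morphism $\tau$ to the open subschemes $Y^{\Delta,0}\subseteq Y^{\Delta}$ and $\Hi^{\prec\Delta,0}_{S/k}\subseteq\Hi^{\prec\Delta}_{S/k}$ (Proposition \ref{restriction0}), so by Theorem \ref{immersion} it is itself an immersion; in particular it is injective on points, and a preimage of a closed point of the target is automatically a closed point of the source. Hence it suffices to show that $\tau^{0}$ is surjective on closed points. (Injectivity can also be read off directly from the description below: a closed point of $Y^{\Delta,0}$ over a separably closed field is a multiset $\{(A_{i},b_{i})\}$ with the $b_{i}$ pairwise distinct, and both the heights $b_{i}$ and the slices $A_{i}$ are recovered from the image set without any choice, the only ambiguity — relabelling slices having the same standard set — being precisely what the quotient by $G=S_{h_{1}}\times\cdots\times S_{h_{m}}$ removes.)

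So fix a closed point $z$ of $\Hi^{\prec\Delta,0}_{S/k}$, put $F:=\kappa(z)$, and let $\overline{F}$ be a separable closure of $F$. The canonical $F$-point with image $z$ is a finite \'etale degree-$r$ family $Z\subset\mathbb{A}^n_F$ over ${\rm Spec}\,F$ whose ideal is monic with standard set $\Delta$; base changing to $\overline{F}$ turns $Z$ into a set $A\subset\overline{F}^n$ of $r$ distinct $\overline{F}$-rational points, and since the standard set of a monic ideal is invariant under base change (functoriality of reduced Gr\"obner bases, cf.\ \cite{strata}) we have $D(A)=\Delta$. Now slice: let $b_{1},\dots,b_{h}\in\overline{F}$ be the distinct $n$-th coordinates occurring in $A$, put $A_{i}:=q^n(A\cap\{x_{n}=b_{i}\})\subset\overline{F}^{n-1}$ and $\Delta_{i}:=D(A_{i})$. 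By the main theorem of \cite{jpaa}, $\sum_{i}\Delta_{i}=D(A)=\Delta$, so $\{\Delta_{i}\}$ is a decomposition $I\in\mathcal{I}$; since the $A_{i}$ are \'etale over $\overline{F}$ and the $b_{i}$ are pairwise distinct, the multiset $\{(A_{i},b_{i}):i\in I\}$ is an $\overline{F}$-point $\bar w$ of $Y^{I,0}\subseteq Y^{\Delta,0}$, and by the functorial description of $\tau^{0}$ (Corollary \ref{connectfourmultiset}, Proposition \ref{restriction0}) its image is the ideal cutting out $A$, i.e.\ $z$ base changed to $\overline{F}$.

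It remains to descend $\bar w$ from $\overline{F}$ to $F$, and this Galois-descent step is the main obstacle. Since $Z$ is defined over $F$, the set $A=Z(\overline{F})$ is stable under ${\rm Gal}(\overline{F}/F)$; an element $\sigma$ of that group carries the height-$b_{i}$ slice of $A$ to the height-$\sigma(b_{i})$ slice, and by Lemma \ref{invariance}(ii) the latter has standard set $D(A_{\sigma b_{i}})=D(A_{b_{i}})=\Delta_{i}$. Thus $\sigma$ merely permutes the pairs $(A_{i},b_{i})$ within the blocks $I_{1},\dots,I_{m}$ of equal standard set, i.e.\ acts trivially on the corresponding point of the $G$-quotient, so $\bar w$ is ${\rm Gal}(\overline{F}/F)$-invariant. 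Consequently the residue field of the image of $\bar w$ in $Y^{\Delta,0}$, which contains $F=\kappa(z)$ via $\tau^{0}$, is also contained in the fixed field $(\overline{F})^{{\rm Gal}(\overline{F}/F)}=F$, hence equals $F$; therefore $\bar w$ is induced by a genuine closed point $w$ of $Y^{\Delta,0}$ with $\tau^{0}(w)=z$. This proves surjectivity on closed points and, together with the first paragraph, the proposition.
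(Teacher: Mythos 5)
Your proof is correct and follows essentially the same route as the paper: injectivity from Theorem \ref{immersion}, then surjectivity by passing to the separable closure, slicing the resulting set of $r$ points by its $x_n$-coordinates, invoking the main theorem of \cite{jpaa} to get a decomposition, and using Lemma \ref{invariance} to see that the Galois action only permutes slices with equal standard sets, so the multiset descends. Your residue-field argument in the descent step merely makes explicit what the paper leaves implicit, so there is nothing to add.
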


\begin{proof}
  Injectivity follows from Theorem \ref{immersion}.
  As for surjectivity, we take a closed point $y\in\Hi^{\prec \Delta,\et}_{S/k}$
  and consider its residue field $F := \kappa(y)$. 
  Thus $y$ is an $F$-valued point of $\Hi^{\prec \Delta,\et}_{S/k}$. 
  After base change to the algebraic or separable closure $\overline{F}/F$, the product
  $y\times_{{\rm Spec}\,F}{\rm Spec}\,\overline{F}$ contains an $\overline{F}$-valued point of
  $\Hi^{\prec \Delta,\et}_{S/k}\times_{{\rm Spec}\,k}{\rm Spec}\,\overline{F}
  =\Hi^{\prec \Delta,\et}_{S \otimes_k \overline{F}/\overline{F}}$. 
  That point corresponds to a set of $r$ closed $\overline{F}$-rational points in $\mathbb{A}^n_{\overline{F}}$
  which we denote by $A \subset \overline{F}^n$. 
  For the attached standard set, we clearly have $D(A)=\Delta$. 
  The transition from $A$ back to $y$ is established by Galois descent: 
  By Lemma \ref{invariance} (i), the standard set of $A$ is invariant under the action of the Galois group of $F$. 
  In concrete terms, for all $\sigma\in{\rm Gal}(\overline{F}/F)$, we have
  \begin{equation*}
    D(\sigma A)=\Delta .
  \end{equation*}
  Therefore the set $(\sigma A)_{\sigma\in{\rm Gal}(\overline{F}/F)}$ 
  defines an $\overline{F}$-valued point of $\Hi^{\prec \Delta,\et}_{S/k}$. 
  That point is just $y$. 

  We can also, however, slice $A$ into a number of horizontal pieces, as we did in Lemma \ref{invariance}. 
  Let $\{\lambda_i: i\in I\}$ be the set of values taken by the $n$-th coordinates of elements of $A$.
  Thus $\{\lambda_i: i\in I\}$ is a finite subset of $\overline{F}$. 
  Upon interpreting each slice $A_{\lambda_i}$ as being a subset of $\overline{F}^{n-1}$,
  we denote the corresponding ideal by $J_{\lambda_i} \subset \overline{S} \otimes_k \overline{F}$, 
  and the standard set of that ideal as $\Delta_i := D(A_{\lambda_i})\in\mathcal{D}_{n}$. 
  The set of ideals 
  \begin{equation}\label{kfamily}
    \left\lbrace \langle J_{\lambda_i} \rangle + \langle x_{n}-\lambda_i \rangle : i \in I \right\rbrace
  \end{equation}
  in $S \otimes_k \overline{F}$ is an $\overline{F}$-valued point of $Y^{I,\et}\times_{{\rm Spec}\,k}{\rm Spec}\,\overline{F}$. 
  Moreover, by Lemma \ref{invariance} (ii), the collection of standard sets $D(A_{\lambda_i})$, 
  where $i$ runs through $I$, is invariant under the action of the Galois group of $F$. 
  In concrete terms, for all $\sigma\in{\rm Gal}(\overline{F}/F)$, we have an equality of sets, 
  \begin{equation*}
    \left\lbrace D((\sigma A)_{(\sigma\lambda_i)}): i\in I \right\rbrace = \left\lbrace \Delta_i: i\in I \right\rbrace .
  \end{equation*}
  Upon writing $\widehat{J}$ for the tuple in \eqref{kfamily}, 
  invariance tells us that the set $\{ \sigma \widehat{J} : \sigma \in {\rm Gal}(\overline{F}/F) \}$ 
  defines an $\overline{F}$-valued point of $Y^{I,\et}$, 
  therefore in particular an $\overline{F}$-valued point of $Y^{\Delta,\et}$.
  The image under $\tau$ of that point is just $y$. 
\end{proof}


\subsection{Proving the main results}\label{theresult}

We can now prove Theorem \ref{mainthm}, i.e., 
count the number of components of $\Hi^{\prec \Delta,\et}_{S/k}$ and determine its dimension. 
After that, we will make some statements on that the number of components of $\Hi^{\prec \Delta}_{S/k}$ and its dimension. 

\begin{proof}[Proof of Theorem \ref{mainthm}]
  (i). By Theorem \ref{immersion} and Proposition \ref{restriction0}, $\tau^\et$ is an immersion. 
  In particular, as a map of topological spaces, 
  $\tau^\et$ is a homeomorphism from $Y^{\Delta,\et}$ to a subspace of $\Hi^{\prec \Delta,\et}_{S/k}$. 
  We claim that this subspace is all of $\Hi^{\prec \Delta,\et}_{S/k}$. 
  
  For proving this, we first reduce to the situation where $k=\mathbb{Z}$. 
  The construction of $\Hi^{\prec \Delta}_{S/k}$ given in Section 8 of \cite{strata}, shows that 
  $\Hi^{\prec \Delta}_{S/k}$ arises from $\Hi^{\prec \Delta}_{\mathbb{Z}[x]/\mathbb{Z}}$ by base change, 
  \begin{equation*}
    \Hi^{\prec \Delta}_{S/k}=\Hi^{\prec \Delta}_{\mathbb{Z}[x]/\mathbb{Z}}
    \times_{{\rm Spec}\,\mathbb{Z}}{\rm Spec}\,k .
  \end{equation*}
  The reason for this is the fact that $\Hi^{\prec \Delta}_{S/k}$ 
  is the closed subscheme of a certain affine space over $k$ defined by an ideal with integer coefficients. 
  Analogously, the non-\'etale part of $\Hi^{\prec \Delta}_{S/k}$ is defined by an ideal with integer coefficients. 
  Therefore, $\Hi^{\prec \Delta,\et}_{S/k}$ arises from $\Hi^{\prec \Delta,\et}_{\mathbb{Z}[x]/\mathbb{Z}}$
  by base change. We see that it suffices to prove the claim in the case where $k=\mathbb{Z}$. 
  
  From the construction given in Section 8 of \cite{strata},
  we also see that $\Hi^{\prec \Delta,\et}_{\mathbb{Z}[x]/\mathbb{Z}}$ is of finite type over ${\rm Spec}\,\mathbb{Z}$. 
  Now if the image of $\tau^\et$ were smaller than $\Hi^{\prec \Delta,\et}_{\mathbb{Z}[x]/\mathbb{Z}}$, 
  its complement would clearly be a subscheme of finite type over ${\rm Spec}\,\mathbb{Z}$. 
  By Hilbert's Nullstellensatz, however, such a scheme contains closed points. 
  This is a contradiction to Proposition \ref{descent}, and the claim is proved. 
    
  (ii). We use induction over $n$. 
  We start with \eqref{quotient}, the definition of $Y^I$, from which we see that 
  \begin{equation}\label{y0quot}
    Y^{I,\et} = \widehat{Y}^{I,\et} / G .
  \end{equation}
  As the canonical map $\widehat{Y}^{I,\et} \to Y^{I,\et}$ has finite fibers, 
  it follows that in the passage from $\widehat{Y}^{I,\et}$ to $Y^{I,\et}$, 
  the dimensions of all components of the respective schemes does not change. 
  Now for $n=1$, we have 
  \begin{equation*}
    \Hi^{\prec \Delta,\et}_{S/k}=\Hi^{r,\et}_{S/k}=(\mathbb{A}_k^r \setminus \Lambda)/S_{r} .
  \end{equation*}
  This is an irreducible scheme 
  of relative dimension $r=\#\Delta=\#q_{1}(\Delta)$ over ${\rm Spec}\,k$. 
  For $n>1$, we use the fact that $\tau^\et$ is a homeomorphism; we use the equality $\dim Y^{I,\et} = \dim Y^I$; 
  and we determine the dimension of the subscheme $Y^I$ of $Y^\Delta$, for an arbitrary $I$.
  From \eqref{full}, we obtain that 
  \begin{equation*}
    \dim(Y^{I})=\sum_{i \in I}\dim(\Hi^{\prec \Delta_i,\et}_{\overline{S}/k}+1)
    =\sum_{i \in I}\dim(\Hi^{\prec \Delta_i,\et}_{\overline{S}/k})+\#q_{n}(\Delta) .
  \end{equation*}
  Here we used the identity $\#I=\#q_{n}(\Delta)$, mentioned in Section \ref{subsectiondecomps}. 
  By induction hypothesis, we have
  \begin{equation*}
    \dim(\Hi^{\prec \Delta_i,\et}_{\overline{S}/k})=\sum_{j=1}^{n-1}\#\overline{q}_{j}(\Delta_i) ,
  \end{equation*}
  for all $i\in I$, where $\overline{q}_{j}$ is the analogue of $q_{j}$ of \eqref{qj}, with $n$ replaced by $n-1$. 
  It is easy to see that 
  $\sum_{i \in I}\sum_{j=1}^{n-1}\#\overline{q}_{j}(\Delta_i) + \#q_n(\Delta) = \sum_{j = 1}^n \#q_{j}(\Delta)$,
  from which the assertion on the relative dimension follows. 
  Equidimensionality follows as well, as we did not pose any restriction on the C4 decomposition, or, equivalently,
  the indexing set $I$, we started with. 
  
  (iii) and (iv). This is also proved by induction over $n$. 
  For $n=1$, the scheme $\Hi^{\prec \Delta,\et}_{S/k}$ is an open subscheme of 
  affine space $\mathbb{A}^r_k$. 
  Therefore, that scheme has exactly one connected, and irreducible, resp., component. 
  This is in accord with $d(\Delta) = 1$.
  If $n\geq2$, we argue as follows.
  The number of connected, and irreducible, resp., components of $\Hi^{\prec\Delta,\et}_{S/k}$
  equals the sum of the number of connected, and irreducible, resp., components of $Y^{I,\et}$, 
  where $I$ runs through all (indexing sets of) C4 decompositions of $\Delta$. 
  We once more use the characterization \eqref{y0quot} of $Y^{I,\et}$, rewriting that scheme as
  \begin{equation*}
    \begin{split}
      Y^{I,\et} & = \bigl( ( \prod_{i \in I}\Hi^{\prec \Delta_i,\et} \times \mathbb{A}^1_k ) \setminus \Lambda \bigr) / G \\
       & = \biggl( \prod_{j = 1}^m
       \Bigl( \bigl( (\prod_{i \in I_j} \Hi^{\prec \Delta_i,\et}_{S/k} \times \mathbb{A}^1_k) 
       \setminus \Lambda_j \bigr) / S_{h_j} \Bigr) \biggr)
       \setminus \widetilde{\Lambda} , \text{ where } \\
      \Lambda_j & := \cup_{i \neq a \in I_j} \mathbb{V}(y_i - y_a) , \text{ and } \\
      \widetilde{\Lambda} & := \text{ image of } \Lambda \text{ under the canonical map.}
    \end{split}
  \end{equation*}
  Here we also used the decomposition \eqref{ij} of $I$ into $I_j$. 
  
  Let us first discuss irreducible components of $Y^{I,\et}$. 
  In the above description of $Y^{I,\et}$, we may put 
  each $\Lambda_j$ back into the product $(\prod_{i \in I_j} \Hi^{\prec \Delta_i,\et}_{S/k} \times \mathbb{A}^1_k)$, 
  thereby keeping the number of irreducible components as it is. 
  As $\mathbb{A}^1_k$ is irreducible, we may dispose of each factor $\mathbb{A}^1_k$. 
  Moreover, we may put $\widetilde{\Lambda}$ back into the space. 
  Therefore, it suffices to determine the number of irreducible components of the space
  \begin{equation*}
    Z^I := \prod_{j = 1}^m
       \bigl( \prod_{i \in I_j} \Hi^{\prec \Delta_i,\et}_{S/k} / S_{h_j} \bigr) .
  \end{equation*}
  By induction hypothesis, 
  we may assume that the number of irreducible components of $\Hi^{\prec \Delta_i,\et}_{S/k}$ equals $d(I_j)$, 
  the C4 decomposition number of $\Delta_i$, for any $i \in I_j$. 
  Denote by $Z_1, \ldots, Z_{d(I_j)}$ the irreducible components. 
  Then it is not hard to see that the decomposition of $Z^I$ into irreducible components is given by
  \begin{equation}\label{zi}
    Z^I = \bigcup_{A_1, \ldots ,A_m} 
    \prod_{j=1}^m \bigl( \prod_{a_j \in A_j} Z_{a_j} / S_{h_j} \bigr) ,
  \end{equation}  
  where each $A_j$ runs through all multisets in $I_j$ of size $h_j$, 
  and $S_{h_j}$ acts on $\prod_{a_j \in A_j} Z_{a_j}$ by permuting only those factors 
  In particular, it follows that the number of irreducible components of $Z^I$ is given by 
  \begin{equation*}
    \prod_{j = 1}^m {d(I_j) + h_j - 1 \choose h_j}
  \end{equation*}
  Upon summing over all C4 decompositions of $\Delta$, indexed by various $I$, 
  we see that the number of irreducible components of $\Hi^{\prec \Delta,\et}_{S/k}$
  satisfies the functional equation \eqref{functionald}. 
  Analogously as in the proof of Lemma \ref{graphlemma}, 
  we obtain that the number of irreducible components of $\Hi^{\prec \Delta,\et}_{S/k}$ equals $d(\Delta)$. 
  
  We now show that for the space $\Hi^{\prec \Delta,\et}_{S/k}$, 
  its number of connected components equals its number of irreducible components. 
  We first make the transition back from $Z^I$ to $Y^{I,\et}$, 
  putting each factor $\mathbb{A}^1_k$ back in, removing each $\Lambda_j$ and removing $\widetilde{\Lambda}$. 
  Then the decomposition \eqref{zi} of $Z^I$ corresponds to a decomposition of $Y^{I,\et}$, 
  \begin{equation*}
    Y^{I,\et} = \cup_{A_1, \ldots ,A_m} 
    \biggl( \prod_{j=1}^m 
    \Bigl( \bigl( (\prod_{a_1 \in A_1} Z_{a_1} \times \mathbb{A}^1_k) \setminus \Lambda_1 \bigr) / S_{h_1} \Bigr) 
    \biggr) \setminus \widetilde{\Lambda} .
  \end{equation*}
  By construction of $\Lambda_j$ and $\widetilde{\Lambda}$, this union is in fact a coproduct. 
  Now the claim follows from an elementary observation: 
  Assume that a space $X$ admits a decomposition into irreducible components, $X = X_1 \cup \ldots \cup X_l$. 
  Then if this union is a coproduct, it is also a decomposition into connected components. 
\end{proof}


\subsection{Non-generalizations}\label{negative}

Remember that in Section \ref{original}, 
we mentioned the question whether or not the number of irreducible components of 
$\Hi^{\prec\Delta}_{S/k}$ equals $d(\Delta)$. 
If so, we would have a generalization of the first statement of Theorem \ref{mainthm} (iii) 
from $\Hi^{\prec \Delta,\et}_{S/k}$ to $\Hi^{\prec\Delta}_{S/k}$. 
(The scheme in question has only one connected component, as is shown in \cite{strata},
thus there is no hope for generalizing the second statement.)
However, that generalization does not apply: 

\begin{cor}\label{fullprime}
  For large $n$, the scheme $\Hi^{\prec \Delta}_{S/k}$ in general contains more than $d(\Delta)$
  irreducible components, whose relative dimension is in general larger than $\sum_{j=1}^n\#q_{j}(\Delta)$. 
\end{cor}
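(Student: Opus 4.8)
The plan is to prove Corollary \ref{fullprime} by exhibiting, for suitable $n$ and $\Delta$, a concrete family of reduced Gröbner bases parametrized by a subscheme of $\Hi^{\prec\Delta}_{S/k}$ that cannot come from the Connect Four morphism, and whose dimension exceeds $\sum_{j=1}^n\#q_j(\Delta)$. The natural source of such a family is the ``fat point'' phenomenon: when $\Delta$ has a corner $\alpha$ with $q^n(\alpha)\notin\Delta_i$ forced for every slice in every decomposition, the non-étale locus genuinely enlarges the scheme. More concretely, I would start from Theorem \ref{mainthm}, which pins down $\Hi^{\prec\Delta,0}_{S/k}$ exactly, and contrast it with the full scheme. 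Since $\Hi^{\prec\Delta,0}_{S/k}$ is an open subscheme of $\Hi^{\prec\Delta}_{S/k}$, the closure of each of its $d(\Delta)$ irreducible components of dimension $\sum_{j=1}^n\#q_j(\Delta)$ is an irreducible component of $\Hi^{\prec\Delta}_{S/k}$ of at least that dimension. So it suffices to produce one more irreducible component of $\Hi^{\prec\Delta}_{S/k}$ not contained in the closure of $\Hi^{\prec\Delta,0}_{S/k}$, and to exhibit a point of $\Hi^{\prec\Delta}_{S/k}$ whose local dimension exceeds $\sum_{j=1}^n\#q_j(\Delta)$.

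First I would fix a small explicit example. The curvilinear/punctual part of the Hilbert scheme already shows non-equidimensionality once $r$ and $n$ grow: for instance take $\Delta$ so that $\mathbb{A}^n_k$ contains a ``curvilinear'' family and, at the same time, a positive-dimensional family of non-reduced (``punctual'') length-$r$ subschemes supported at the origin whose ideal has standard set $\Delta$, the latter family having dimension strictly bigger than the curvilinear one — this is the classical Iarrobino phenomenon, which occurs e.g. for $n=3$ and $r$ moderately large, or (to make the dimension comparison cleanest) for larger $n$ with $\Delta$ an initial segment of a suitable degree. For such $\Delta$ the punctual Gröbner stratum $\Hi^{\prec\Delta}_{S/k}\cap\{\text{ideals supported at }0\}$ is a locally closed subset all of whose points are non-étale, hence disjoint from $\Hi^{\prec\Delta,0}_{S/k}$; its closure meets $\Hi^{\prec\Delta}_{S/k}$ in components which are therefore \emph{not} among the $d(\Delta)$ components of Theorem \ref{mainthm}, and for Iarrobino's $\Delta$ this punctual locus has dimension exceeding $\sum_{j=1}^n\#q_j(\Delta)$. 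I would compute $\sum_{j=1}^n\#q_j(\Delta)$ directly from the combinatorics of the chosen $\Delta$ and compare it against Iarrobino's lower bound for the dimension of the punctual component.

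The key steps, in order, are: (1) choose explicit $(n,\Delta)$; (2) compute the number $\sum_{j=1}^n\#q_j(\Delta)$ and hence the dimension of all the ``reduced'' components by Theorem \ref{mainthm}(ii); (3) exhibit a family of monic ideals with standard set $\Delta$ consisting entirely of ideals that are \emph{not} radical (e.g. supported at a single point), parametrized by a locally closed subscheme $P\subset\Hi^{\prec\Delta}_{S/k}$; (4) show $\dim P$ (or the dimension of a component of $\Hi^{\prec\Delta}_{S/k}$ containing $P$) exceeds $\sum_{j=1}^n\#q_j(\Delta)$; (5) observe that $P$, being disjoint from the open set $\Hi^{\prec\Delta,0}_{S/k}$, cannot lie in the closure of any of the $d(\Delta)$ components found in Theorem \ref{mainthm}, so $\Hi^{\prec\Delta}_{S/k}$ has strictly more than $d(\Delta)$ irreducible components; (6) note that because $\Hi^{\prec\Delta,0}_{S/k}$ is equidimensional of dimension $\sum_{j=1}^n\#q_j(\Delta)$ while $\dim P$ is larger, $\Hi^{\prec\Delta}_{S/k}$ fails to be equidimensional and its relative dimension over $\mathrm{Spec}\,k$ is the larger number $\dim P$. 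Here the word ``in general'' in the statement is discharged by producing one such $(n,\Delta)$; I would phrase the corollary's conclusion accordingly.

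The main obstacle is step (4): getting a clean, honest lower bound on the dimension of the non-étale part of $\Hi^{\prec\Delta}_{S/k}$ for an \emph{explicit} $\Delta$, and simultaneously a clean \emph{upper} bound — or rather an exact value — for $\sum_{j=1}^n\#q_j(\Delta)$, so that one can be sure the former beats the latter. Iarrobino's examples are usually stated for the full Hilbert scheme $\Hi^r_{S/k}$ rather than for a single Gröbner stratum, so some care is needed to pull the estimate back to one monomial stratum: one wants the generic initial ideal (or at least \emph{some} initial ideal) of the punctual family to be constant and equal to a fixed $\langle x^\gamma:\gamma\notin\Delta\rangle$, so that a whole positive-dimensional chunk of the punctual locus sits inside one $\Hi^{\prec\Delta}_{S/k}$. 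An alternative, possibly cleaner, route — which I would fall back on if the Iarrobino bookkeeping gets heavy — is to take $\Delta$ with a single ``deep'' corner forcing a tangent-space computation: compute $\dim_k T_{[I]}\Hi^{\prec\Delta}_{S/k}$ at the monomial point $I=\langle x^\gamma:\gamma\notin\Delta\rangle$ via the explicit quadratic equations $I^{\prec\Delta}$ from \cite{strata}, show it already exceeds $\sum_{j=1}^n\#q_j(\Delta)$, and conclude that the component through $[I]$ has too-large dimension; this sidesteps needing a global family and reduces everything to linear algebra in the $T_{\alpha,\beta}$.
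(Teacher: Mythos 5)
The decisive step in your plan, step (4), is exactly the part you leave open, and it is the whole content of the corollary. To run your argument you must name an explicit $\Delta$ and produce a locus $P\subset\Hi^{\prec\Delta}_{S/k}$ of non-reduced ideals, all with the \emph{same} standard set $\Delta$, whose dimension strictly exceeds $\sum_{j=1}^n\#q_j(\Delta)$; Iarrobino-type dimension estimates are statements about $\Hi^r_{S/k}$ (or about punctual Hilbert schemes), and distributing such a family over the finitely many Gr\"obner strata, i.e.\ controlling the initial ideals so that a big chunk lands in one fixed $\Hi^{\prec\Delta}_{S/k}$, is precisely the hard bookkeeping you flag as ``the main obstacle'' and do not resolve. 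The paper avoids this entirely by a softer, non-constructive argument: by Theorem \ref{mainthm}, every irreducible component of $\Hi^{\prec\Delta}_{S/k}$ that is the closure of a component of $\Hi^{\prec\Delta,0}_{S/k}$ has dimension $\sum_{j=1}^n\#q_j(\Delta)\leq nr$; since at the level of closed points $\Hi^{r}_{S/k}=\coprod_{\Delta}\Hi^{\prec\Delta}_{S/k}$ (see \eqref{rdelta}), and since by \cite{velasco} the full Hilbert scheme has components of dimension larger than $nr$ once $n\geq3$, $r\geq8$, \emph{some} stratum must carry a component of dimension $>nr$, which therefore is not such a closure; hence $b(\Delta)>a(\Delta)=d(\Delta)$ for that (unspecified) $\Delta$, and the same comparison gives the dimension assertion. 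In other words, the stratification \eqref{rdelta} is what lets the paper convert a known global fact into the ``in general'' statement without ever exhibiting the explicit family your plan requires.

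Two further points in your write-up are genuinely flawed. First, step (5) as stated is false: disjointness of $P$ from the open set $\Hi^{\prec\Delta,0}_{S/k}$ does not prevent $P$ from lying in the \emph{closure} of that open set (smoothable punctual ideals do lie in the closure of the \'etale locus); what would save the count is only the dimension comparison, namely that any component containing $P$ has dimension $\geq\dim P>\sum_{j=1}^n\#q_j(\Delta)$ and so cannot equal any $\overline{C_{\Delta,i}}$, all of which have dimension exactly $\sum_{j=1}^n\#q_j(\Delta)$ by Theorem \ref{mainthm}(ii) --- so the burden falls back on step (4). Second, your fallback route via the tangent space at the monomial point does not work: $\dim_k T_{[I]}\Hi^{\prec\Delta}_{S/k}$ bounds the local dimension from \emph{above}, not below, so showing it exceeds $\sum_{j=1}^n\#q_j(\Delta)$ only shows $[I]$ may be singular and gives no lower bound on the dimension of the component through $[I]$; indeed the paper's examples $\Delta_1,\Delta_2$ at the end of Section \ref{negative} have extra components of \emph{smaller} dimension than the expected one, so excess components need not be detected, let alone produced, by tangent-space excess.
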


\begin{proof}
  Let us denote by $C_{\Delta,i}$, where $i=1,\ldots,a(\Delta)$, 
  the irreducible components of $\Hi^{\prec \Delta,\et}_{S/k}$; 
  and by $D_{\Delta,j}$, where $j=1,\ldots,b(\Delta)$, the irreducible components of $\Hi^{\prec \Delta}_{S/k}$. 
  As $\Hi^{\prec \Delta,\et}_{S/k} \subset \Hi^{\prec \Delta}_{S/k}$, 
  the closure in $\Hi^{\prec \Delta}_{S/k}$ of each $C_{\Delta,i}$ is equal to some $D_{\Delta,j}$. 
  It follows that $a(\Delta)\leq b(\Delta)$, and $a(\Delta)=b(\Delta)$ if, and only if, 
  each $D_{\Delta,j}$ arises from a $C_{\Delta,i}$ by taking the closure. 
  Moreover, the dimension of each $D_{\Delta,j}$, which arises from a $C_{\Delta,i}$ by taking the closure cannot exceed $nr$, 
  the dimension of $\Hi^{r,\et}_{S/k}$. However, from Theorem 2 of \cite{strata}, 
  we know that at the level of spaces of closed points, the identity 
  \begin{equation}\label{rdelta}
    \Hi^{r}_{S/k}=\coprod_{\Delta}\Hi^{\prec \Delta}_{S/k}
  \end{equation}
  holds, where the disjoint sum goes over all standard sets $\Delta$ of size $r$. 
  From equation (1) of \cite{iarrobino}, 
  we know that $\dim {\rm Hilb}^{r}_{S/k} > nr$ if $n = 3$ and $r \geq 102$ or $n = 4$ and $r \geq 25$. 
  It follows that $a(\Delta)<b(\Delta)$ for some $\Delta$. 
  
  As for the assertion of the relative dimension, we use \eqref{rdelta} once more. 
  As the relative dimension of the full Hilbert scheme of $r$ points is in general larger than $nr$, 
  there exists a $\Delta$ such that $\Hi^{\prec \Delta}_{S/k}$ has a relative dimension strictly larger than $nr$. 
  However, $\sum_{j=1}^n\#q_{j}(\Delta)\leq nr$ for all $\Delta$. 
\end{proof}

The proof of Corollary \ref{fullprime} fails if one replaces $\Hi^{\prec \Delta}_{S/k}$ by the scheme
\begin{equation*}
  \mathscr{G}^{\prec \Delta}_{S/k} := \Hi^{\prec \Delta}_{S/k} \cap \mathscr{G}^r_{S/k} ,
\end{equation*}
where $\mathscr{G}^r_{S/k}$ is the {\it good component} of $\Hi^r_{S/k}$, 
i.e., the scheme-theoretic closure of $\Hi^{r,\et}_{S/k}$ inside $\Hi^r_{S/k}$. 
(See \cite{ekedahlskjelnes} and \cite{rydhskjelnes} for constructions of the good component.)
Indeed, the dimension of the good component equals $nr$, and therefore, the above arguments do not lead to a contradiction. 
Therefore, there is hope that the number of irreducible components of $\mathscr{G}^{\prec \Delta}_{S/k}$ equals $d(\Delta)$. 
In other words, the hope is that the statement of Theorem \ref{mainthm} (iii) 
generalizes from $\Hi^{\prec \Delta,\et}_{S/k}$ to $\mathscr{G}^{\prec \Delta}_{S/k}$. 
However, the following example shows that also that generalization does not apply:
Consider the standard set
\begin{equation*}
  \Delta := \{0, e_1, e_2, e_3, 2e_1, e_1 + e_2 \} \in \mathcal{D}_3. 
\end{equation*}
$\Delta$ is of size $6$. 
Therefore Theorem 1.1 of \cite{velasco} shows that $\mathscr{G}^{\prec \Delta}_{S/k} = \Hi^{\prec \Delta}_{S/k}$. 
It is easy to see that $d(\Delta) = 1$. 
But with the help of Macaulay2 \cite{M2}, 
one finds that $\Hi^{\prec \Delta_1}_{S/k}$ has two irreducible components, 
one of the expected dimension, $\#q_1(\Delta_1) + \#q_2(\Delta_1) + \#q_3(\Delta_1) = 11$, 
the other of dimension $10$. 
Therefore, this specific $\Delta$ provides a counterexample to the conjectured generalization. 
(Another counterexample is provided by 
\begin{equation*}
  \Delta^\prime := \{0, e_1, e_2, e_3, e_1 + e_2, 2e_2 \} \in \mathcal{D}_3 .
\end{equation*}
The two standard sets $\Delta$ and $\Delta^\prime$ are the only three-dimensional standard sets of size $6$ 
providing counterexamples to the conjectured generalization.)


\bibliography{components.bib}
\bibliographystyle{amsalpha}

\end{document}